\documentclass[11pt, a4paper]{article}

\usepackage{inputenc} 
\usepackage[english]{babel}
\usepackage{dsfont}
\usepackage{enumerate}

\usepackage[totalheight=24 true cm, totalwidth=17 true cm]{geometry}
\usepackage{enumitem}

\usepackage{bbding}
\usepackage{amsmath,multirow}
\usepackage{amsthm}
\usepackage{amssymb}
\usepackage{subcaption}
\usepackage{mathrsfs}
\usepackage{mathtools}
\usepackage{ stmaryrd }
\usepackage{url}
\usepackage{wrapfig}
\usepackage{starfont}
\usepackage{pifont}
\usepackage{eurosym}
\usepackage{ulem}
\usepackage{cancel}
\usepackage{array}
\usepackage{booktabs}
\usepackage{tikz}
\usepackage{tikz}
\usetikzlibrary{automata, positioning, arrows}
\usetikzlibrary{calc}

\usepackage{imakeidx}
\makeindex[]

\usepackage{multicol}
\usepackage{relsize}
\usepackage{float}
\usepackage{tikz,pgf}
\usepackage{tikz-cd}
\usepackage{wasysym}
\usepackage{graphicx}
\usepackage{fancyhdr}
\usepackage{verbatim}

\usepackage{pgfplots}
\pgfplotsset{compat=1.15}
\usepackage{mathrsfs}
\usetikzlibrary{arrows}
\usepackage{parskip}
\usepackage{tocloft}
\usetikzlibrary{positioning}

\usepackage[all,dvips,knot,web,arc,curve,color,frame]{xy}
\usepackage[all,knot,arc,color,web]{xy}
\xyoption{arc}
\xyoption{web}
\xyoption{curve}

\numberwithin{equation}{section}

\theoremstyle{plain}
\newtheorem{theorem}{Theorem}[section]
\newtheorem{proposition}[theorem]{Proposition}
\newtheorem{corollary}[theorem]{Corollary}
\newtheorem{lemma}[theorem]{Lemma}

\theoremstyle{definition}
\newtheorem{definition}[theorem]{Definition}

\newtheorem{remark}[theorem]{Remark}

\newtheorem{question}{Question}
\usepackage{amsthm}

\newtheorem{example}{Example}

\AtEndEnvironment{example}{\hfill$\triangle$}

\usepackage[bookmarksnumbered=true]{hyperref} 
\hypersetup{
     colorlinks = true,
     linkcolor = blue,
     anchorcolor = blue,
     citecolor = teal,
     filecolor = blue,
     urlcolor = blue
     }

\everymath{\displaystyle}
\allowdisplaybreaks

\makeatletter
\def\mathcenterto#1#2{\mathclap{\phantom{#1}\mathclap{#2}}\phantom{#1}}
\let\old@widetilde\widetilde
\def\widetildeto#1#2{\mathcenterto{#2}{\old@widetilde{\mathcenterto{#1}{#2\,}}}}
\let\old@widehat\widehat
\def\widehatto#1#2{\mathcenterto{#2}{\old@widehat{\mathcenterto{#1}{#2\,}}}}
\makeatother

\renewcommand{\emph}[1]{\textit{#1}}

\hypersetup{
    colorlinks=true,
    linkcolor=blue,
    filecolor=magenta,      
    urlcolor=cyan,
    pdftitle={Overleaf Example},
    pdfpagemode=FullScreen,
    }
    
\setlist[itemize]{noitemsep}

\normalfont

\usepackage{bbm}
\usepackage{dsfont}

\usepackage{url}
\usepackage{amsmath,blkarray,booktabs, bigstrut}
\usepackage{tikz}
\usepackage{multirow}
\usepackage{xcolor}
\usepackage{color}
\newtheorem*{question*}{Question}

\title{Discrete-time, discrete-state  multistate Markov models from the perspective of algebraic statistics}
\usepackage{xcolor}
\author{Dario Gasbarra, Kaie Kubjas, Sangita Kulathinal,  \\ Nataliia Kushnerchuk, Fatemeh Mohammadi, Etienne Sebag}
\date{\today}

\begin{document}

\maketitle
\begin{abstract}
\noindent We study discrete-time, discrete-state multistate Markov models from the perspective of algebraic statistics. These models are commonly studied in event history analysis, and are characterized by the state space, the initial distribution and the transition probabilities. The event histories provide information on the states occupied at instances over a time period and the transitions between states are governed by the multistate model. A finite path  under the multistate Markov model is a particular set of states occupied at finite time instances $\{1, \dots, n\}$. The main goal of this paper is to establish a bridge between event history analysis and algebraic statistics. The joint probabilities of finite paths in these models has a natural monomial parametrization in terms
of initial distribution and transition probabilities. We study the polynomial relations among joint path probabilities. We show that nonhomogeneous multistate Markov models of arbitrary order when the statistical constraints on the parameters are disregarded, are slices of decomposable hierarchical models. This yields a complete description of their vanishing ideals as toric ideals generated by explicit families of binomials. Moreover, the variety of this vanishing ideal equals the nonhomogeneous multistate Markov model on the probability simplex. In contrast, homogeneous multistate Markov models exhibit different algebraic behavior, as time homogeneity imposes additional polynomial relations, leading to vanishing ideals that are strictly larger than in the nonhomogeneous case. We derive families of binomial relations that vanish on homogeneous multistate Markov models and show with examples that
these relations form only a partial generating set of the vanishing ideal in general. Finally, we analyze maximum likelihood estimation from both statistical and algebraic perspectives. For nonhomogeneous multistate Markov models, the classical formula in statistics for computing maximum likelihood estimates agrees with the formula for nondecomposable hierarchical models. In the homogeneous setting, examples show that solving the MLE problem algebraically can be substantially more involved than using classical statistical methods in this setting. Finally, we provide real data applications where we apply the statistical theory to obtain the maximum likelihood estimates of the parameters under specific multistate Markov models.
\end{abstract}
{
\hypersetup{
  linkcolor=black,
  citecolor=black,
  urlcolor=black
}
\tableofcontents
}
\section{Introduction}\label{sec:Intro}
Event history analysis provides a statistical framework for modeling processes that evolve over time
through sequences of events. A central class of models in this area is given by multistate Markov
models, which describe such processes via transitions between a finite set of states. In discrete
time, the probability of an observed path or a trajectory factors multiplicatively in terms of transition
probabilities, giving rise to polynomial relations among joint path probabilities. In this paper, we
study the algebraic structure underlying these models.

\subsection{Event history and multistate Markov models}
We begin by recalling the statistical formulation of multistate Markov models and by presenting several canonical examples drawn from event history analysis. These examples serve two purposes: they illustrate the modeling assumptions underlying multistate Markov models, and they highlight the common structural feature that will be exploited throughout the paper, namely that observations correspond to finite paths whose probabilities factor as products of the initial distribution and transition probabilities.

In this context, the term \emph{event history} refers to information and data about events occurring during a unit’s lifetime.
Units and events usually carry context-dependent definitions. For example, in life history studies, units are people (or subjects, individuals), and events are outcomes related to health, such as the diagnosis of a specific disease, birth, death, fertility, or sociodemographic status (education, employment, marriage). In engineering applications, units are systems, and events are related to the functioning of the system, such as transitions between normal, defective, and failed states.

Event history analysis is a multidisciplinary area of statistics concerned with the timing and occurrence of one or multiple events during a unit’s lifetime. Such analyses treat event times as random variables and account for both their occurrence and temporal ordering, with modeling choices depending on the study design and data-generating mechanism. A common feature of event history data is that events are recorded over time for each unit, leading to longitudinal data structures. Multistate models provide a flexible framework for analyzing such processes and are widely used across the life sciences, engineering, insurance, economics, and social sciences.

A multistate model characterizes an event history process by specifying a discrete state space $S$ together with the allowed transitions between states. Event history data are used to estimate the transition rates 
under the assumed multistate model. When an event history process is modeled using a multistate framework, each event corresponds to a specific type of transition between two states. Both continuous- and discrete-time multistate models are studied in the literature. We will consider discrete-time multistate models under Markov assumption (Section~\ref{sec:Preliminaries}) in this paper and refer to them as multistate Markov models or multistate Markov chains. In this paper, we use the term Markov chains to indicate Markov chains with all possible transitions. With this convention, it is clear that a multistate Markov chain will typically have forbidden transitions according to the structure of the multistate model. For more information on  discrete-time multistate Markov chains, we refer readers to \cite[Appendix A.2]{Aalen2008} and  \cite[Section 1.3]{Cook2018}, and for applications, to \cite{BORGAN2006, Diggle2007, Dudel2020, Suresh2022} and references therein.
We present three examples to illustrate this idea below. We will return to the illness-death model and its generalization in later sections to illustrate the algebraic description, ideals and related topics.

\textbf{Simple survival model.} We begin with the simple survival model, in which an individual is assumed to be alive at the start of the study and is followed until death.
In this setting, only one event (death) is possible. This event history process can be modeled using the multistate model shown in Figure~\ref{fig:survival}. The state space is $S=\{0, 1\}$, with initial state $0$, and the event corresponds to the transition from state $0$ to state $1$. This model satisfies the Markov assumption, since the current state depends only on the most recent past state and not on the entire history of the process.  The labels \emph{alive} and \emph{dead} are generic and should be interpreted according to the application.
For example, in an engineering context, the survival model may describe the transition of a system from a functioning state to a failed state.

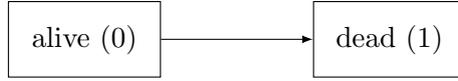
\begin{figure}[!ht]
\begin{center}
\begin{tikzpicture}[
    sharp corners=2pt,
    inner sep=5pt,
    node distance=2cm,
    >=latex]
\tikzstyle{my node}=[draw,minimum height=1cm,minimum width=2cm]
\node[my node] (alive){alive $(0)$};
\node[my node,right=of alive](dead){dead $(1)$};
\draw[->] (alive) -- (dead);
\end{tikzpicture}
\end{center}
\caption{Simple survival model. An individual starts in state $0$ and remains in that state until transitioning to state $1$. For three discrete time points, the process admits three possible paths (or trajectories): $(0,0,0)$, $(0,0,1)$, and $(0,1,1)$. Once the process enters state $1$, it remains there; in this example, state $1$ is an absorbing state.}\label{fig:survival}
\end{figure}
\textbf{2-state model.} This model extends the simple survival model by allowing transitions from state $1$ back to state $0$ (Figure~\ref{fig:2state}). Applications include transitions between susceptible and infected states, the dynamics of disease activity or biomarkers, and the functioning of a system. The state space is $S=\{0,1\}$, and two types of events are possible: transitions from state $0$ to state $1$ and from state $1$ to state $0$. A \emph{no-transition} is typically not regarded as an event, since the state does not change. Under this model, a unit may start in either state $0$ or $1$.

\begin{figure}[!ht]
\begin{center}
\begin{tikzpicture}[
    sharp corners=2pt,
    inner sep=5pt,
    node distance=2cm,
    >=latex]
\tikzstyle{my node}=[draw,minimum height=1cm,minimum width=2cm]
\node[my node] (healthy){healthy $(0)$};
\node[my node,right=of healthy](diseased){diseased $(1)$};
\draw[->] (healthy.5) -- (diseased.175);
\draw[->] (diseased) -- (healthy);
\end{tikzpicture}
\end{center}
\caption{An alternating 2-state process. An individual starts in state $0$ and remains there until transitioning to state $1$. From state $1$, the process may return to state $0$, and multiple transitions between the two states are possible. For three discrete time points, the process admits four possible paths (or trajectories): $(0,0,0)$, $(0,0,1)$, $(0,1,0)$, and $(0,1,1)$.}
\label{fig:2state}
\end{figure}
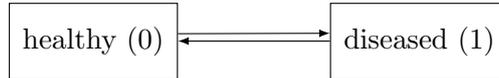

\textbf{Illness–death model.} The illness–death model is widely studied due to its many applications. For example, state $0$ may represent a disease-free state following treatment (complete remission of cancer), state $1$ may represent recurrence of the disease, and state $2$ represents death.

\begin{figure}[!ht]
    \centering
\begin{tikzpicture}[
    sharp corners=2pt,
    inner sep=5pt,
    node distance=2cm,
    >=latex]
\tikzstyle{my node}=[draw,minimum height=1cm,minimum width=2cm]
\node[my node] (healthy){healthy $(0)$};
\node[my node,right=of healthy](diseased){diseased $(1)$};
\node[my node] at ($(healthy)!0.5!(diseased)-(0pt,1.5cm)$) (dead) {dead $(2)$};
\draw[->] (healthy) -- (dead);
\draw[->] (diseased) -- (dead);
\draw[->] (healthy) -- (diseased);
\end{tikzpicture}
   \caption{An illness-death model. The initial state is $0$, and state $2$ is absorbing. For three discrete~time points, the process admits six possible paths: 
   $(0,0,0)$, $(0,0,1)$, $(0,1,1)$, $(0,0,2)$, $(0,1,2)$,~and~$(0,2,2)$.}
    \label{fig:illness}
\end{figure}
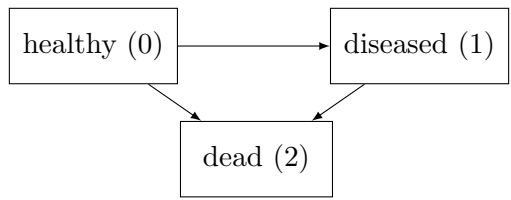

Multistate models are fully characterized by the state space, the initial distribution and the transition probabilities. In this paper, we focus on discrete-time, discrete-state multistate Markov chains and describe the dynamics of the chains via transition probabilities. Under these assumptions, the probability of an observed path admits a multiplicative factorization determined by the initial distribution and successive state transition probabilities (see Section~\ref{sec:Preliminaries}).
This factorization induces a natural monomial parametrization of joint path probabilities and thereby leads to natural questions from the perspective of algebraic statistics. While the first-order Markov chains have been studied in algebraic statistics (see, e.g., \cite[Section 1.1]{sullivant2023algebraic}), the $k$-th order multistate Markov models, where only a subset of transitions are allowed and the current state depends on the past $k$ states, are not formally transcribed, to our knowledge, in the language of algebraic statistics. In addition, a higher-order Markov chain may not be treated as a first-order Markov chain because not all properties of first-order Markov chains carry over to higher-order Markov chains \cite{Xu2026}, and the purposes for which they are used may also differ.

\subsection{An algebraic perspective on multistate Markov models}

In all these examples, an observation corresponds to a finite path through the state space, and its
probability admits a multiplicative factorization determined by successive state transitions. This
factorization depends only on the Markov structure, the order of dependence, and possible structural
constraints such as forbidden transitions or absorbing states. From an algebraic perspective, this multiplicative structure induces a monomial parametrization. 
The Zariski closure of the image of this monomial parametrization is an algebraic variety that contains the multistate Markov model as a subset.

When imposing the statistical constraints on the parameters of the monomial parametrization, the vanishing ideal of the above variety captures all polynomial constraints among joint path probabilities implied by the model assumptions. As a result, determining generators of this ideal provides an algebraic characterization of the model and has direct consequences for identifiability, model equivalence, and likelihood-based inference.
This algebraic formulation provides a unified framework for comparing different classes of multistate Markov models and for understanding how modeling assumptions translate into algebraic constraints. One caveat is that this algebraic characterization does not account for inequality constraints which would lead to a study in real algebraic geometry.

Algebraic methods play a
central role in discrete statistical modeling
\cite{pistone2001algebraic,drton-sturmfels-sullivant,sullivant2023algebraic}.
Several algebraic approaches to statistical models have been developed in related contexts,
particularly for log-linear, graphical, and conditional independence models.
Many classical models, including log-linear
and graphical models, are equal to a probability simplex intersected with an algebraic variety defined by binomial ideals.
Informally, the probability simplex imposes the basic probabilistic constraints that probabilities are nonnegative and sum to one, while the algebraic variety encodes the structural assumptions of the model through polynomial equations. The statistical model then consists precisely of those probability distributions satisfying both the probabilistic and algebraic constraints. See Section~\ref{sec:algebraic_description} for further details.

This perspective is particularly natural for multistate Markov models with structural restrictions. For example, a forbidden transition can be encoded by requiring the corresponding transition probability to be zero, while an absorbing state can be represented by requiring the probability of remaining in that state to be one. More generally, polynomial relations among transition probabilities can be incorporated naturally into the algebraic framework. See the discussion following Example~\ref{example:revillness_death_model} for further details.

For decomposable graphical
models, the vanishing ideals are toric when statistical constraints on parameters are disregarded and yield closed-form maximum likelihood estimators with
maximum likelihood degree one \cite{diaconis1998algebraic,dobra2003markov}. The log-linear and discrete undirected graphical models are studied from algebraic perspective in~\cite{geiger2006toric}, where decomposability is central to both polynomials vanishing on the model and rational maximum likelihood estimates.

Algebraic encoding of conditional independence beyond classical graphical models has also been
developed using combinatorial structures~\cite{herzog2010binomial}. In particular, lattice conditional independence
models relate collections of conditional independence statements to distributive lattices and Hibi
ideals, extending beyond standard DAG representations
\cite{caines2022lattice,ene2011monomial}. 
However, multistate Markov models do not fit directly into these frameworks: their structural assumptions such as time-homogeneity or forbidden transitions induce algebraic constraints that go beyond standard conditional independence models. Despite their central role in event history analysis \cite{andersen2012multi}, these algebraic features have not been systematically explored.
Within this algebraic context, the present work provides a unified algebraic-statistical study of discrete-time multistate Markov models.

The main goal of this paper is to establish a systematic connection between multistate Markov models and algebraic statistics. This translation makes it possible to study multistate models using algebraic tools such as ideals, varieties, and polynomial constraints. In particular, structural features of multistate models, such as forbidden transitions and absorbing states, have a natural algebraic description. Examples~\ref{example:illness_death_model} and~\ref{example:revillness_death_model} illustrate how these models can be translated into the language of algebraic statistics and motivate the algebraic questions studied in the remainder of the paper.

\subsection{Main results and contributions}

We summarize below the main results and contributions of this paper.
We formulate discrete-time multistate Markov models as algebraic statistical models via monomial
parametrizations of joint path probabilities
(Section~\ref{sec:algebraic_description}). In the following two sections we study the associated vanishing ideals. For the nonhomogeneous setting, we prove that $k$-th order multistate Markov models when the statistical constraints on the parameters are disregarded
are decomposable hierarchical models (Lemma~\ref{lem:markov_is_hierarchical}). This identification
allows us to use existing results on decomposable models to obtain an explicit description
of the defining equations in terms of binomial generators
(Proposition~\ref{prop:generating set general}), building on
\cite{dobra2003markov,diaconis1998algebraic}. We further show that models
with forbidden transitions or absorbing states arise as coordinate slices of the general
nonhomogeneous model, and we describe how their vanishing ideals are obtained by adding linear
relations (Proposition~\ref{prop:restricted_model_slice} and
Corollary~\ref{lem:generators_restricted}). We show that in the nonhomogeneous setting the $k$-th order multistate model is equal to its Zariski closure intersected with the probability simplex (Proposition~\ref{prop:equal_ideal_with_constraints}). 

In contrast, we show that homogeneous multistate Markov models satisfy additional algebraic
relations. 
We derive a general family of binomial
relations induced by time-homogeneity (Proposition~\ref{prop:homogeneous binomials}) and show
through explicit examples that these relations do not suffice to generate the full vanishing
ideal even when we disregard the statistical constraints on the parameters. We also analyze maximum likelihood estimation from both statistical and algebraic
perspectives. In the nonhomogeneous case, we discuss that the formula for maximum likelihood estimates for decomposable hierarchical models agrees with the classical formulas from statistics, while in the homogeneous case we see that the algebraic formulation leads to more challenges than the
classical statistical approach (Section~\ref{sec:mle}). As a consequence we derive that in the homogeneous setting the $k$-th order multistate model can be strictly included in its Zariski closure interested with the probability simplex (Proposition~\ref{prop:model_is_not_equal_to_variety_interesected_with_probability_simplex}). 
Finally, we illustrate the theory on classical multistate Markov models from event history analysis,
including illness-death type models and models with structural constraints, highlighting the
practical implications of the algebraic structure for inference and model interpretation
(Section~\ref{sec:applications}).

We emphasize that the algebraic viewpoint is used here to understand structure and constraints;
in some cases, classical statistical methods remain simpler for inference.

\smallskip
\noindent{\bf Outline.}
Section~\ref{sec:Preliminaries} introduces discrete-time, discrete-state multistate Markov models and recalls the
factorization of finite path probabilities.~Section~\ref{sec:algebraic_description} presents the
algebraic viewpoint via monomial parametrizations and the associated vanishing ideals.
Section~\ref{sec:implicitization_nonhomogeneous} develops the main structural results for the
nonhomogeneous setting, relating these models to decomposable hierarchical models and
studying their vanishing ideals. Section~\ref{sec:hom} turns to the homogeneous setting and
shows that time-homogeneity imposes additional algebraic constraints. Section~\ref{sec:mle}
studies maximum likelihood estimation and its relation to the algebraic structure of the models.
Section~\ref{sec:applications} illustrates the theory on classical multistate models from event
history analysis. Section~\ref{sec:discussion} concludes with further directions and open problems.

\medskip\noindent{\bf Code availability.}
The computational examples are publicly available at
\url{https://github.com/FatemehMohammadi/algebraic-multistate-markov-models}.
The repository contains the Macaulay2 implementations for Examples~\ref{example:illness_death_model},~\ref{example:revillness_death_model},~\ref{example:strict_inclusion_homogeneous}, and~\ref{example:homogeneous_MLE}, together with the R code and data used for the computations in Section~\ref{sec:applications}.

\section{Discrete-time, discrete-state multistate Markov models}\label{sec:Preliminaries}
We consider discrete-time, discrete-state Markov chains and briefly review the basic definitions and the factorization of path probabilities. Throughout the paper, motivated by applications in survival and event history analysis, we use the term \emph{Markov chain model} for an unrestricted discrete-time, discrete-state Markov model, and \emph{multistate Markov model} for a model with forbidden transitions.

\medskip
\noindent \textbf{First-order Markov chains.}
A discrete-time stochastic process $\{X_{\ell} : \ell \ge 1\}$ with a finite state space $S$ is a first-order  Markov chain if, for each time $\ell\ge 2$, the conditional distribution of $X_{\ell}$ given the past depends only on the most recent state $X_{\ell-1}$. That is, for all $\ell \ge 2$ and all $i,j \in S$,
\begin{align}
P(X_\ell = j \mid X_1 = i_1, \dots, X_{\ell-2} = i_{\ell-2}, X_{\ell-1} = i)
&= P(X_\ell = j \mid X_{\ell-1} = i)
:= a_{ij}^{(\ell)}. \label{eqn:MP}
\end{align}
The condition \eqref{eqn:MP} is called the Markov property. If the transition probabilities do not depend on $\ell$, the chain is \emph{time-homogeneous}, i.e.\ $a_{ij}^{(\ell)}=a_{ij}$ for all $\ell$ and all $i,j\in S$.

\medskip
\noindent The joint probability of a length-$n$ path $(i_1,\ldots,i_n)$ can be written in terms of transition~probabilities~as
\begin{align}
p_{i_1 \ldots i_n}
&= P(X_1=i_1,\ldots,X_n=i_n) \nonumber\\
&=  P(X_1=i_1) \prod_{\ell=2}^n P(X_{\ell}=i_{\ell} \mid X_1=i_1,\ldots,X_{\ell-1}=i_{\ell-1}) \nonumber\\
&= \pi_{i_1}\prod_{\ell=2}^n P(X_{\ell}=i_{\ell} \mid X_{\ell-1}=i_{\ell-1})
= \pi_{i_1}\prod_{\ell=2}^n a_{i_{\ell-1}\, i_{\ell}}^{(\ell)}, \label{eqn:pathprob}
\end{align}
where $\pi_{i_1}=P(X_1=i_1)$ denotes the initial distribution. In particular, if the initial state is fixed (e.g.\ $\pi_0=1$), then the factor $\pi_{i_1}$ can be omitted in~\eqref{eqn:pathprob}.

Since $\pi_{i_1}$ and $a_{i_{\ell-1}\, i_{\ell}}^{(\ell)}$ represent (conditional) probabilities, they satisfy 
\begin{align*}
& \biggl\{ \pi_{i_1}  \geq 0 \; \text{for all} \;  i_1 \in S, \;  \text{and} 
\sum_{i_1 \in S} \pi_{i_1}=1, \; \text{and} \\ 
& a_{i_{\ell-1}\, i_{\ell}}^{(\ell)}  \geq 0, \; i_{\ell-1},i_{\ell} \in S, \text{and} \; \sum_{i_{\ell} \in S} a_{i_{\ell-1}\, i_{\ell}}^{(\ell)}=1,
\text{for all} \; \ell \in \{2,\ldots,n\}\;  \text{and} \; 
  i_{\ell-1} \in S \biggr\}.
\end{align*}
The first-order Markov chains are well-studied in algebraic statistics, see~\cite[Section 1.1]{sullivant2023algebraic}.

\noindent \textbf{Higher-order Markov chains.}
More generally, a discrete-time stochastic process $\{X_{\ell} : \ell \ge 1\}$ with state space $S$ is a $k$-th order Markov chain if, for each $\ell > k$, the conditional distribution of $X_{\ell}$ given the past depends only on the most recent $k$ states. That is, for all $\ell > k$ and all states in $S$,
\begin{align}
P(X_{\ell} = i_{\ell} \mid X_1 = i_1, \dots, X_{\ell-1} = i_{\ell-1})
&= P(X_{\ell} = i_{\ell} \mid X_{\ell-k}=i_{\ell-k}, \dots, X_{\ell-1}=i_{\ell-1}) \nonumber\\
&:= a^{(\ell)}_{i_{\ell-k}\cdots i_{\ell-1} i_{\ell}}. \label{eqn:MP-k}
\end{align}
The condition \eqref{eqn:MP-k} is the Markov property for the $k-$th order Markov chain.  In the time-homogeneous case, $a^{(\ell)}_{i_{\ell-k}\cdots i_{\ell-1} i_{\ell}}=a_{i_{\ell-k}\cdots i_{\ell-1} i_{\ell}}$ for all $i_{\ell-k},\ldots,i_{\ell} \in S$ and  $\ell > k$.
The joint probability of a length-$n$ path $(i_1,\ldots,i_n)$, $n > k$, can be written in terms of transition~probabilities~as \\
\begin{align}
p_{i_1 \ldots i_n}
&= P(X_1=i_1,\ldots,X_n=i_n) \nonumber\\
&= P(X_1=i_1,\ldots,X_k=i_k)\prod_{\ell=k+1}^n P(X_{\ell}=i_{\ell} \mid X_1=i_1,\ldots,X_{\ell-1}=i_{\ell-1}) \nonumber\\
&= \pi_{i_1 \ldots i_k}\prod_{\ell=k+1}^n P(X_{\ell}=i_{\ell} \mid X_{\ell-k}=i_{\ell-k}\ldots X_{\ell-1}=i_{\ell-1})
= \pi_{i_1 \ldots i_k}\prod_{\ell=k+1}^n a^{(\ell)}_{i_{\ell-k}\cdots i_{\ell-1} i_{\ell}}, \label{eqn:pathprob-k}
\end{align}
where $\pi_{i_1 \ldots i_k}=P(X_1=i_1,\ldots,X_k=i_k)$ denotes the initial distribution.

Since $\pi_{i_1 \ldots i_k}$ and $a^{(\ell)}_{i_{\ell-k}\cdots i_{\ell-1} i_{\ell}}$ represent (conditional) probabilities, they satisfy 
\begin{align}
& \biggl\{ \pi_{i_1 \ldots i_k} \geq 0, \;  \text{for all} \;  i_1,\ldots,i_k \in S, \; \; \sum_{i_1,\ldots,i_k \in S} \pi_{i_1 \ldots i_k}=1, \; \text{and} \nonumber \\ 
& a^{(\ell)}_{i_{\ell-k}\cdots i_{\ell-1} i_{\ell}} \geq 0, \;  
\;  
\sum_{i_{\ell} \in S} a^{(\ell)}_{i_{\ell-k}\cdots i_{\ell-1} i_{\ell}}=1, \; \text{for all} \;  \ell \in \{k+1,\ldots,n\}, i_{\ell-k},\cdots,i_{\ell-1}\in S \biggr\}. \label{constraints:korder}
\end{align}
\noindent\textbf{Models with forbidden transitions.}
In practice, the nature of the underlying process may impose additional constraints on the admissible
transitions between states in the process $\{X_{\ell}, \ell \ge 1\}$. In such cases, the resulting model
can be viewed as a slice of the general model. The assumption that certain transitions are forbidden
is typically expressed by setting the corresponding transition probabilities to zero, which in turn
implies that all paths containing forbidden transitions have probability zero. 

For example, under the illness-death model (Figure~\ref{fig:illness}), transitions from state
$1 \rightarrow 0$ are forbidden, so $P(X_{\ell} = 0 \mid X_{\ell - 1} = 1) = a_{10} = 0$ for all
$\ell > 1$. In other applications, some states may be absorbing, such as the state ``dead'' in
Figure~\ref{fig:illness}, in which case $P(X_{\ell} = j \mid X_{\ell - 1} = 2) = a_{2j}=0$ for
$j=0,1$, and $P(X_{\ell} = 2 \mid X_{\ell - 1} = 2) = a_{22} = 1$. By the same token, the initial
state may also be restricted, depending on the research question and study setting.
Under a Markov chain model with three states, the number of possible length-$4$ paths
$(X_1,\dots,X_4)$ is $3^4$, whereas under the illness-death model (a three-state model with forbidden
transitions), with no restriction on the initial state, there are only $14$ possible paths
(Table~\ref{tab:obsevations4}).

For another illustrative example of forbidden transitions, we consider a Markov chain on the alphabet
$\texttt{a-z}$ together with a space symbol, which represents an absorbing state. Each word then
corresponds to a trajectory of the chain. We restrict attention to \emph{alternating words}, in which
transitions from vowel to vowel and from consonant to consonant are forbidden, as in the word
\emph{honorificabilitudinitatibus}.

The word \textit{honorificabilitudinitatibus} was an uncommon one we found in the data, and the structure it follows is quite restrictive  because English spelling is not generally governed by alternating words, in which transitions of the form vowel to consonant to vowel to consonant and so on are the only possibilities. Moreover, the transitions are not governed only by the previous letter. The likelihood of a subsequent letter depends on characteristics such as whether or not we are at the beginning or end of a word, whether the current segment is a prefix or a suffix, whether we are inside a consonant cluster, whether the word has Latin roots, for example, and so on. If we still wanted to use this model structure, then a hidden Markov model could be appropriate to create latent states that precisely capture the linguistic structure from the observed letters. In Section~\ref{sec:applications}, we will actually analyze a version of these data that do not comprise the word \emph{honorificabilitudinitatibus}, in order to illustrate maximum likelihood estimation under two multistate Markov models from both statistical and algebraic perspectives.

 \section{Algebraic statistical framework}\label{sec:algebraic_description}
In this section we translate the probabilistic factorization of path probabilities from
Section~\ref{sec:Preliminaries} into an algebraic  framework. The key point is that the
Markov property yields a monomial parametrization of the joint path probabilities in terms of an
initial distribution and transition parameters. We formalize this parametrization, and explain how time-homogeneity and forbidden transitions fit into this
setup. Finally, we recall basic facts on toric ideals, since the models considered here are defined
by monomial parametrizations and hence have toric vanishing ideals.

 \subsection{Parametrization and implicitization}
\label{subsec:paramtrization}

The goal of this section is to describe the algebraic setup corresponding to the statistical identity~\eqref{eqn:pathprob-k}. The factorization of joint path probabilities in \eqref{eqn:pathprob-k} defines a monomial parametrization of the joint distribution on paths.
We define the parametrization map
\[
\phi:\;
\mathbb{R}^{S^k} \times \prod_{\ell=k+1}^n \mathbb{R}^{S^{k+1}}
\longrightarrow \mathbb{R}^{S^n},
\qquad
\phi_{i_1\ldots i_n}\!\left(
\pi_{i_1\ldots i_k},
\bigl(a^{(\ell)}_{i_{\ell-k}\cdots i_{\ell}}\bigr)_{\ell=k+1}^n
\right)
=
\pi_{i_1\ldots i_k}
\prod_{\ell=k+1}^n a^{(\ell)}_{i_{\ell-k}\cdots i_{\ell-1} i_\ell}.
\]
The pullback of $\phi$ is the ring homomorphism
\[
\phi^*: \mathbb{R}[p_{i_1\ldots i_n}] \rightarrow \mathbb{R}\bigl[\,
\pi_{i_1 \ldots i_k},\;
a^{(\ell)}_{i_{\ell}\, i_{\ell+1}\cdots i_{\ell+k}}
\;\big|\;
\ell=k+1,\ldots,n
\bigr], \quad \phi^*(p_{i_1 \ldots i_n})= \pi_{i_1 \ldots i_k}\prod_{\ell=k+1}^n a^{(\ell)}_{i_{\ell-k}\cdots i_{\ell-1} i_{\ell}}.
\]
In the homogeneous setting, the transition parameters
$a^{(\ell)}_{i_{\ell-k} i_{\ell-k+1} \cdots i_{\ell}}$
coincide for all $\ell$. We therefore drop the upper index and write
$a_{i_{\ell-k} i_{\ell-k+1} \cdots i_{\ell}}$ in the above formulas. We use the same notation for both parametrizations as it is usually clear from the context whether we are in nonhomogeneous or homogeneous setting.

Forbidden transitions are incorporated by setting the corresponding transition parameters to zero, yielding coordinate slices of the general parametrization.

We study the kernel of the pullback $\phi^*$, denoted $I:=\ker(\phi^*)$, which is an ideal in
$\mathbb{R}[p_{i_1\ldots i_n}]$. The elements of the kernel are polynomials in the joint probabilities $p_{i_1\ldots i_n}$. These polynomials vanish on all the joint probabilities in the corresponding models.  They also vanish at other points that may not correspond to joint probability distributions at all. Determining generators of this ideal is known as the implicitization
problem and provides a complete algebraic description of the model. In the next two sections, we
investigate this problem in the nonhomogeneous and homogeneous settings. For more information on the implicitization problem, we refer the reader to \cite[Chapter 3 
\S3]{cox2007ideals}.

\begin{example}[Illness-death model] \label{example:illness_death_model}
We assume that the illness-death model (Figure~\ref{fig:illness}) is a first-order multistate Markov model with three states.  This example corresponds to forbidden transitions according to Figure~\ref{fig:illness} where the initial state can be $0$ or $1$. The choice of the initial state typically depends on the research question. Table~\ref{tab:obsevations4} gives the path probabilities for four time instances under this model.
This example illustrates that the implicitization ideals in the homogeneous and nonhomogeneous settings differ substantially in structure and size.

Homogeneous ideal for Table~\ref{tab:obsevations4}:
\begin{equation*}
\begin{gathered}
 \langle p_{1 1 2 2}^2-p_{1 1 1 2}p_{1 2 2 2},\ p_{0 1 2 2}p_{1 1 2 2}-p_{0 1 1 2}p_{1 2 2 2},\ p_{0 0 2 2}p_{0 1 2 2}-p_{0 0 1 2}p_{0 2 2 2},\\
  p_{1 1 1 2}p_{0 1 2 2}-p_{01 1 2}p_{1 1 2 2},\ p_{1 1 1 1}p_{0 1 2 2}-p_{0 1 1 1}p_{1 1 2 2},\ p_{0 0 2 2}^2-p_{0 0 0 2}p_{0 2 2 2},\\
  p_{0 0 1 2}p_{0 0 2 2}-p_{0 0 0 2}p_{0 1 2 2},\ p_{1 1 11}p_{0 1 1 2}-p_{0 1 1 1}p_{1 1 1 2},\ p_{1 1 1 1}p_{0 0 1 2}-p_{0 0 1 1}p_{1 1 1 2},\\
  p_{0 1 1 1}p_{0 0 1 2}-p_{0 0 1 1}p_{0 1 1 2},\ p_{0 0 1 1}p_{0 0 1 2}-p_{00 0 1}p_{0 1 1 2},\ p_{0 0 1 1}^2-p_{0 0 0 1}p_{0 1 1 1},\\
  p_{0 0 1 2}p_{1 1 2 2}p_{0 2 2 2}-p_{0 1 1 2}p_{0 0 2 2}p_{1 2 2 2},\ p_{0 0 1 1}p_{1 1 2 2}p_{0 2 22}-p_{0 1 1 1}p_{0 0 2 2}p_{1 2 2 2},\\
  p_{0 0 0 1}p_{1 1 2 2}p_{0 2 2 2}-p_{0 0 1 1}p_{0 0 2 2}p_{1 2 2 2},\ p_{0 0 0 1}p_{1 1 1 2}p_{0 2 2 2}-p_{0 1 1 1}p_{0 00 2}p_{1 2 2 2},\\
  p_{0 1 1 2}p_{0 0 2 2}p_{1 1 2 2}-p_{0 0 1 2}p_{1 1 1 2}p_{0 2 2 2},\ p_{0 1 1 1}p_{0 0 2 2}p_{1 1 2 2}-p_{0 0 1 1}p_{1 1 1 2}p_{0 2 2 2},\\
  p_{00 1 1}p_{0 0 2 2}p_{1 1 2 2}-p_{0 1 1 1}p_{0 0 0 2}p_{1 2 2 2},\ p_{0 0 0 1}p_{0 0 2 2}p_{1 1 2 2}-p_{0 0 1 1}p_{0 0 0 2}p_{1 2 2 2},\\
  p_{0 0 0 2}p_{0 1 22}^2-p_{0 0 1 2}^2p_{0 2 2 2},\ p_{0 0 1 1}p_{1 1 1 2}p_{0 0 2 2}-p_{0 1 1 1}p_{0 0 0 2}p_{1 1 2 2},\\
  p_{0 0 0 1}p_{1 1 1 2}p_{0 0 2 2}-p_{0 0 1 1}p_{0 0 0 2}p_{11 2 2},\ p_{0 0 1 1}p_{0 1 1 2}p_{0 0 2 2}-p_{0 1 1 1}p_{0 0 0 2}p_{0 1 2 2},\\
  p_{0 0 0 1}p_{0 1 1 2}p_{0 0 2 2}-p_{0 0 1 1}p_{0 0 0 2}p_{0 1 2 2},\ p_{0 0 12}^2p_{1 1 1 2}p_{0 2 2 2}-p_{0 0 0 2}p_{0 1 1 2}^2p_{1 2 2 2}\rangle.
\end{gathered}
\qedhere\end{equation*}

Nonhomogeneous ideal for Table~\ref{tab:obsevations4}:
\begin{gather*}
    \langle
p_{1 1 1 2}p_{0 1 2 2}-p_{0 1 1 2}p_{1 1 2 2},\ p_{1 1 1 1}p_{0 1 2 2}-p_{0 1 1 1}p_{1 1 2 2},\ p_{1 1 1 1}p_{0 1 1 2}-p_{0 1 1 1}p_{1 1 1 2},\\
    p_{1 1 1 1}p_{0 01 2}-p_{0 0 1 1}p_{1 1 1 2},\ p_{0 1 1 1}p_{0 0 1 2}-p_{0 0 1 1}p_{0 1 1 2} 
\rangle.
\end{gather*}

{\renewcommand{\arraystretch}{1.2}%
\begin{table}[!ht]
    \centering 
    \caption{Potential observations at four time instances under the illness-death model (Figure~\ref{fig:illness}) assumed to be a first-order multistate Markov model. The initial state can be $0$ or $1$. Transition from state $1$ to $0$ is forbidden so $a_{10}^{(\ell)} = 0,$ for all $\ell$. State $2$ is an absorbing state and hence, the chain will stay in state $2$ once it is  reached. The transition probabilities from state $2$ to states $0$ and $1$ are all zero, and to state $2$ is one. Hence, all probabilities of the type $a_{20}^{(\ell)} = 0, a_{21}^{(\ell)} = 0, a_{22}^{(\ell)} = 1,$ for all $\ell$.}\label{tab:obsevations4}
    \begin{tabular}{c|c|c|r}
    \hline
        Initial & Three next instances & Prob (nonhomogeneous) & Prob (homogeneous)\\
        \hline
        0 & 000 & $\pi_0a_{00}^{(1)} a_{00}^{(2)} a_{00}^{(3)}$ & $\pi_0a_{00}^3$ \\
        0 & 001  & $\pi_0a_{00}^{(1)} a_{00}^{(2)} a_{01}^{(3)}$ & $\pi_0a_{00}^2 a_{01}$ \\
        0 & 002  & $\pi_0a_{00}^{(1)} a_{00}^{(2)} a_{02}^{(3)}$ & $\pi_0a_{00}^2 a_{02}$ \\
        0 & 011  & $\pi_0a_{00}^{(1)} a_{01}^{(2)} a_{11}^{(3)}$ & $\pi_0a_{00} a_{01} a_{11}$ \\
        0 & 012  & $\pi_0a_{00}^{(1)} a_{01}^{(2)} a_{12}^{(3)}$ & $\pi_0a_{00} a_{01} a_{12}$ \\
        0 & 022  & $\pi_0a_{00}^{(1)} a_{02}^{(2)} a_{22}^{(3)}$ & $\pi_0a_{00} a_{02} a_{22}$ \\
        0 & 111  & $\pi_0a_{01}^{(1)} a_{11}^{(2)} a_{11}^{(3)}$ & $\pi_0a_{01} a_{11}^2$ \\
        0 & 112  & $\pi_0a_{01}^{(1)} a_{11}^{(2)} a_{12}^{(3)}$ & $\pi_0a_{01} a_{11} a_{12}$ \\
        0 & 122  & $\pi_0a_{01}^{(1)} a_{12}^{(2)} a_{22}^{(3)}$ & $\pi_0a_{01} a_{12} a_{22}$\\
        0 & 222  & $\pi_0a_{02}^{(1)} a_{22}^{(2)} a_{22}^{(3)}$ & $\pi_0a_{02} a_{22}^2$\\      
        \hline
        1 & 111 & $\pi_1a_{11}^{(1)} a_{11}^{(2)} a_{11}^{(3)}$ & $\pi_1a_{11}^3$ \\
        1 & 112 & $\pi_1a_{11}^{(1)} a_{11}^{(2)} a_{12}^{(3)}$ & $\pi_1a_{11}^2 a_{12}$ \\
        1 & 122  & $\pi_1a_{11}^{(1)} a_{12}^{(2)} a_{22}^{(3)}$ & $\pi_1a_{11} a_{12} a_{22}$ \\
        1 & 222  & $\pi_1a_{12}^{(1)} a_{22}^{(2)} a_{22}^{(3)}$ & $\pi_1a_{12} a_{22}^2$ \\
        \hline        
    \end{tabular}
\end{table}
}
\end{example}
\begin{example}[{Reversible illness-death model}]
\label{example:revillness_death_model}
We consider a reversible illness-death model, obtained as an extension of the illness-death model
(Figure~\ref{fig:illness} and Example~\ref{example:illness_death_model}) in which the reverse
transition from state $1$ to state $0$ is permitted. This corresponds to the transition structure in
Figure~\ref{fig:illness} with the arrow from state $1$ to $0$ included, and the initial state may be
either $0$ or $1$.
Assuming a first-order multistate Markov model, the path probabilities for four time instances are
shown in Table~\ref{tab:Revobsevations4} (Appendix~\ref{sec:appendix}). The corresponding
nonhomogeneous implicitization ideal is generated by $66$ polynomials, which are listed in
Appendix~\ref{sec:appendix}. In contrast, the implicitization ideal in the homogeneous case is
generated by $274$ polynomials.
\end{example}

Let $\Theta$ be the subset of $\textstyle{\mathbb{R}^{S^k} \times \prod_{\ell=k+1}^n \mathbb{R}^{S^{k+1}}}$ 
satisfying the statistical constraints on the parameters~\eqref{constraints:korder}.  We write $\mathcal{M}:= \phi \left( \Theta \right)$ and $\mathcal{V}:=\phi \left(\textstyle{\mathbb{R}^{S^k} \times \prod_{\ell=k+1}^n \mathbb{R}^{S^{k+1}}}
\right)$. Similarly, we write $\mathcal{M}_F$ and $\mathcal{V}_F$ for the corresponding images when we impose further constraints on the parameters in the form of forbidden transitions.  The vanishing ideal $I(\mathcal{V})$ is equal to $\ker(\phi^*)$.

For $r \in \mathbb{N}$, we define the probability simplex
\[
\Delta_r=\left\{ p \in \mathbb{R}^{r+1} | \sum_{i=1}^{r+1} p_i = 1, p_i \geq 0 \text{ for all } i \right\}.
\]
Then we have the following inclusions:
\[
\mathcal{M} \subseteq \mathcal{V} \cap \Delta_{|S|^n-1} \quad \text{and} \quad \mathcal{M}_F \subseteq \mathcal{V}_F \cap \Delta_{|S|^n-1}.
\]
We will show in Proposition~\ref{prop:equal_ideal_with_constraints} that the first inclusion is equality in the nonhomogeneous setting. We will see in Proposition~\ref{prop:model_is_not_equal_to_variety_interesected_with_probability_simplex}  that the first inclusion is in general a strict inclusion in the homogeneous setting.

The first inclusion implies $\mathcal{M} \subseteq \mathcal{V}$ and $\mathcal{M} \subseteq \Delta_{|S|^n-1}$. Hence $I(\mathcal{M}) \supseteq I(\mathcal{V})$ and $I(\mathcal{M}) \supseteq I(\Delta_{|S|^n-1})$, and therefore $I(\mathcal{M}) \supseteq I(\mathcal{V}) + I(\Delta_{|S|^n-1})$. Moreover, since $I(\mathcal{M})$ is a radical ideal, then 
\[
 \sqrt{I(\mathcal{V}) + I(\Delta_{|S|^n-1})} \subseteq I(\mathcal{M}) .
\]
Substituting $I(\mathcal{V})= \ker (\phi^*)$, $I(\Delta_{|S|^n-1}) = \langle 1-\sum_{i_1,\ldots, i_n}p_{i_1\ldots i_n}\rangle$ and $I(\mathcal{M}) = I(\phi(\Theta))$ gives
\begin{equation} \label{eqn:inclusion_of_inequalities}
\sqrt{\ker (\phi^*)+\langle 1-\sum_{i_1,\ldots, i_n}p_{i_1\ldots i_n}}\rangle\subseteq I(\phi(\Theta)).
\end{equation}
 In the homogeneous setting the latter inclusion can be strict as shown in Example~\ref{example:strict_inclusion_homogeneous}.

\subsection{Toric ideals} \label{sec:toric_ideals}

Toric ideals are classical objects in algebraic geometry, arising as defining ideals of affine toric
varieties, and have been extensively studied from geometric, algebraic, and combinatorial
perspectives. In algebraic statistics, toric ideals arise naturally as vanishing ideals of statistical
models defined by monomial parametrizations, known as log-linear models or discrete exponential
families
\cite{diaconis1998algebraic,dobra2003markov,sullivant2023algebraic,kateri2015family}.

We briefly recall the relevant definitions; see~\cite{sullivant2023algebraic} for further details.

\begin{definition}
\label{def:toric}
Let
$\phi^* : \mathbb{R}[p_1,\ldots,p_m] \longrightarrow \mathbb{R}[\theta_1,\ldots,\theta_r]$
be a ring homomorphism such that each $\phi^*(p_i)$ is a monomial in the variables
$\theta_1,\ldots,\theta_r$. The \emph{toric ideal} associated to $\phi^*$ is the kernel
\[
I_{\phi^*} := \ker(\phi^*) \subseteq \mathbb{R}[p_1,\ldots,p_m].
\]
\end{definition}

The ideal $I_{\phi^*}$ is prime and defines the Zariski closure of the image of the corresponding
monomial parametrization. Moreover, $I_{\phi^*}$ encodes all polynomial relations among the
variables $p_1,\ldots,p_m$ implied by the parametrization. For a vector
$u=(u_1,\ldots,u_m)\in\mathbb{Z}_{\ge 0}^m$, we write $p^u := p_1^{u_1}\cdots p_m^{u_m}$.
A fundamental structural result shows that $I_{\phi^*}$ is generated by binomials: for any
$u,v\in\mathbb{N}^m$ satisfying $\phi^*(p^u)=\phi^*(p^v)$, the binomial $p^u-p^v$ lies in
$I_{\phi^*}$, and such binomials generate the ideal; see, for example,
\cite[Proposition~6.2.4]{sullivant2023algebraic}.

\begin{remark}\label{rem:matrix_rep_toric}
An equivalent description of toric ideals can be given in terms of integer matrices.
Let $A\in\mathbb{Z}^{r\times m}$ be the matrix whose $i$-th column records the exponent vector of
the monomial $\phi^*(p_i)$ in the variables $\theta_1,\ldots,\theta_r$. Then the toric ideal
$I_{\phi^*}$ is generated by all binomials $p^u-p^v$ such that $Au=Av$, or equivalently,
$u-v\in\ker_{\mathbb{Z}}(A)$. In this formulation, the algebraic relations among the joint
probabilities correspond to integer dependencies among the columns of $A$. This matrix representation is particularly useful for maximum likelihood estimation on log-linear models.
\end{remark}

When the statistical constraints on the parameters are disregarded, both the homogeneous and nonhomogeneous multistate Markov models considered in this paper are defined by monomial parametrizations of joint path probabilities in terms of transition parameters.
Consequently, the corresponding vanishing ideals are toric ideals in the sense of
Definition~\ref{def:toric}. 

\section{Nonhomogeneous multistate Markov models} \label{sec:implicitization_nonhomogeneous}

In this section, we study the implicitization of nonhomogeneous $k$-th order multistate Markov models.
Our main goal is to describe the generators of the vanishing ideal of the model and to understand how
this description changes in the presence of forbidden transitions.
The key observation is that nonhomogeneous multistate Markov models when the statistical constraints on the parameters are disregarded can be identified with slices of
decomposable hierarchical models.
This identification allows us to apply existing results on the structure of vanishing ideals
of hierarchical models.
Our contribution is twofold: first, we make this connection explicit for higher-order Markov chains,
and second, we show how forbidden transitions are reflected algebraically by passing to coordinate
slices of the corresponding toric variety.
In Section~\ref{sec:hierarchical_models}, we establish the connection to hierarchical models.
In Section~\ref{sec:implicit_description_nonhomogeneous_subsection}, we describe the generators of the
vanishing ideal for the general nonhomogeneous model and its restricted variants when the statistical constraints on the parameters are disregarded.
Finally, in Section~\ref{sec:summing_to_one_condition}, we show that imposing the summing-to-one
constraint on the toric variety recovers the statistical model inside the probability simplex.

\subsection{Connection to hierarchical models}\label{sec:hierarchical_models}
In the nonhomogeneous setting, Markov chains can be viewed as decomposable hierarchical models in the sense of algebraic statistics. This observation allows one to use established results on the structure and generating sets of the associated vanishing ideals for decomposable models \cite{dobra2003markov,diaconis1998algebraic,sullivant2023algebraic}.

\begin{definition}
A \emph{simplicial complex} on a finite ground set $[m]$ is a collection
$\Gamma \subseteq 2^{[m]}$ such that for all $F \in \Gamma$, every subset
$F' \subseteq F$ also belongs to $\Gamma$.
A \emph{face} of $\Gamma$ is any element of $\Gamma$.
A \emph{facet} of $\Gamma$ is a maximal face of $\Gamma$ with respect to inclusion.
A simplicial complex $\Gamma$ is \emph{reducible} with decomposition
$(\Gamma_1, B, \Gamma_2)$ if $\Gamma_1 \cup \Gamma_2 = \Gamma$,
$\Gamma_1 \cap \Gamma_2 = 2^B$, and $\Gamma_1,\Gamma_2 \neq 2^B$.
Finally, $\Gamma$ is \emph{decomposable} if it is reducible and both components
$\Gamma_1$ and $\Gamma_2$ are either decomposable or simplices (of the form
$2^F$ for some $F$).
\end{definition}

\begin{figure}[ht]
\centering

\begin{minipage}{0.45\textwidth}
\centering

\vspace{0.3cm}

\begin{tikzpicture}[scale=1]
\fill (0,0) circle (2pt) node[below] {$1$};
\fill (2,0) circle (2pt) node[below] {$2$};
\fill (1,1.5) circle (2pt) node[above] {$3$};
\fill (3,1.5) circle (2pt) node[above] {$4$};

\draw (0,0)--(2,0)--(1,1.5)--cycle;
\draw (2,0)--(3,1.5)--(1,1.5)--cycle;
\end{tikzpicture}

\vspace{0.2cm}

Facets: $\{1,2,3\}$ and $\{2,3,4\}$
\end{minipage}
\hfill
\begin{minipage}{0.45\textwidth}
\centering

\vspace{0.3cm}

\begin{tikzpicture}[scale=1]
\fill (0,0) circle (2pt) node[below left] {$1$};
\fill (2,0) circle (2pt) node[below right] {$2$};
\fill (2,2) circle (2pt) node[above right] {$3$};
\fill (0,2) circle (2pt) node[above left] {$4$};

\draw (0,0)--(2,0)--(2,2)--(0,2)--cycle;
\end{tikzpicture}

\vspace{0.2cm}

Facets: $\{1,2\}, \{2,3\}, \{3,4\}, \{1,4\}$
\end{minipage}

\caption{Examples of decomposable (left) and nondecomposable (right) simplicial complexes.}
\label{fig:decomposable_complexes}
\end{figure}

Examples of decomposable  and nondecomposable  simplicial complexes are depicted in Figure~\ref{fig:decomposable_complexes}. We briefly recall the notion of hierarchical models.
\begin{definition} \label{def:hierarchical_model}
Let $\textbf{X} = (X_1, \ldots, X_m)$ be an $m$-dimensional discrete random vector, where each
variable $X_i$ has finite state space $[r_i]$.
Let $\textstyle{R=\prod_{i=1}^m [r_i]}$ denote the joint state space, and fix a simplicial
complex $\Gamma$ on the ground set $[m]$.
The \emph{hierarchical model} associated with $\Gamma$ consists of all probability
distributions $(p_i)_{i\in R}$ on $R$ that admit a parametrization of the form
\begin{equation}
\label{eq:hierarchical model}
p_i
=
\frac{1}{Z(\theta)}
\prod_{F \text{ facet of } \Gamma}
\theta^{(F)}_{i_F},
\qquad i\in R,
\end{equation}
where $\theta^{(F)}_{i_F}$ are nonnegative parameters associated with the facet $F$,
$i_F$ denotes the restriction of $i$ to the coordinates indexed by $F$,
and $Z(\theta)$ is a normalizing constant ensuring that $\textstyle{\sum_{i\in R} p_i = 1}$.
The hierarchical model is said to be \emph{decomposable} if the simplicial complex
$\Gamma$ is decomposable.
\end{definition}

The following lemma shows that the general nonhomogeneous higher-order multistate
Markov model fits naturally into this framework. Recall that a path in the model is represented by a sequence $(i_1,\ldots,i_n)$ of states of length $n$.

\begin{lemma}
\label{lem:markov_is_hierarchical}
Let $\phi$ be the parametrization map of a nonhomogeneous $k$-th order multistate Markov model. The set $\phi \left(\textstyle{\mathbb{R}^{S^k} \times \prod_{\ell=k+1}^n \mathbb{R}^{S^{k+1}}}
\right) \cap \Delta_{|S|^n-1}$ is a decomposable
hierarchical model as defined in Definition~\ref{def:hierarchical_model}.
\end{lemma}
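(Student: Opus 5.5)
The plan is to exhibit the simplicial complex explicitly, check that it is decomposable, and then match the two parametrizations set-theoretically on the simplex. Take the ground set $[n]$, indexing time points, with each $X_\ell$ having state space $S$ so that $r_\ell=|S|$. Let $\Gamma$ be the simplicial complex whose facets are the windows
\[
F_{k+1}=\{1,\ldots,k+1\},\quad F_{k+2}=\{2,\ldots,k+2\},\quad \ldots,\quad F_n=\{n-k,\ldots,n\}.
\]
When $n\le k$ there is nothing to prove, since $\Gamma$ is a single simplex and the model is the whole simplex; we assume $n>k$.

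\textbf{Decomposability.} We argue by induction on the number of facets. A single facet gives a simplex. For more facets, set $\Gamma_1$ to be the complex generated by $F_{k+1},\ldots,F_{n-1}$, take $\Gamma_2=2^{F_n}$, and take $B=\{n-k,\ldots,n-1\}$. The key observation is that $F_n\cap F_\ell\subseteq B$ for every $\ell<n$, with equality when $\ell=n-1$. This gives $\Gamma_1\cap\Gamma_2=2^B$. Moreover $\Gamma_1\neq 2^B$ since it contains the vertex $n-k-1$, and $\Gamma_2\neq 2^B$ since it contains the vertex $n$. By induction $\Gamma_1$ is decomposable or a simplex, so $\Gamma$ is decomposable.

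\textbf{Matching the parametrizations.} For one inclusion, take $p=\phi(\pi,a)\in\Delta$. Define $\theta^{(F_{k+1})}_{i_1\ldots i_{k+1}}=\pi_{i_1\ldots i_k}a^{(k+1)}_{i_1\ldots i_{k+1}}$ and $\theta^{(F_\ell)}=a^{(\ell)}$ for $\ell>k+1$. Since $p$ sums to one, $Z(\theta)=1$, and so $p$ has the form \eqref{eq:hierarchical model}.

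For the converse, start from a distribution $p_i=Z^{-1}\prod_\ell\theta^{(F_\ell)}_{i_{F_\ell}}$. Set $\pi=Z^{-1}\cdot\mathbf 1$, i.e.\ $\pi_{i_1\ldots i_k}=1/Z$, $a^{(k+1)}=\theta^{(F_{k+1})}$, and $a^{(\ell)}=\theta^{(F_\ell)}$ otherwise. Then $\phi(\pi,a)=p$, and $p\in\Delta$ by assumption. Since $\phi$ has unconstrained real parameters, no normalization of the $a$'s is needed here.

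\textbf{Main obstacle: sign conventions.} The step I expect to require care is reconciling these conventions. Definition~\ref{def:hierarchical_model} uses nonnegative $\theta$, whereas $\phi$ allows arbitrary real parameters. So a point of $\mathcal V\cap\Delta$ might a priori arise only from parameters with mixed signs. I would handle this by showing that any point of $\mathcal V\cap\Delta$ admits a nonnegative parametrization, replacing each parameter by its absolute value. The obstruction is that sign products need not be coordinatewise consistent: a factor could be negative while the product stays nonnegative. To resolve this, I would work on the support of $p$. If $p_i>0$, then every factor is nonzero. One can then choose $\theta$'s equal to absolute values, which leaves the products unchanged on the support because each $p_i\ge0$ equals its absolute value. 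Entries with $p_i=0$ have some vanishing factor, and replacing each parameter by its absolute value preserves that vanishing, so these entries stay zero. Hence $|\,\cdot\,|$ applied to every parameter gives the same image point, and the identification holds with nonnegative parameters as the definition requires.
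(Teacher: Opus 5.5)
Your proof is correct and follows the paper's route: the same complex of $(k+1)$-windows, the same decomposition that splits off $\{n-k,\ldots,n\}$ along the separator $\{n-k,\ldots,n-1\}$, and the same relabeling $\theta^{(F_{k+1})}=\pi a^{(k+1)}$, $\theta^{(F_\ell)}=a^{(\ell)}$. You are more complete than the paper, which leaves both the converse inclusion and the nonnegativity of the $\theta$'s implicit; your sign discussion also reduces to the one-line observation $\prod_\ell|\theta^{(F_\ell)}_{i_{F_\ell}}|=|p_i|=p_i$, which is the absolute-value trick used in the proof of Proposition~\ref{prop:equal_ideal_with_constraints}.
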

\begin{proof}
Consider the parametrization map $\textstyle{p_{i_1 \ldots i_n}= \pi_{i_1 \ldots i_k}\prod_{j=k+1}^n a^{(\ell)}_{i_{\ell-k}\ldots i_{\ell}}}$ for a nonhomogeneous $k$-th order Markov multistate model on the state space $S$.
Here $(i_1,\ldots,i_n)$ denotes an arbitrary path of length $n$ in the model and the argument below does not depend on the particular choice of path.

Define a simplicial complex $\Gamma$ on $[n]$ whose facets are
the $(k+1)$-subsets
$\{1,\ldots,k+1\}$, $\{2,\ldots,k+2\},\ldots$, $\{n-k,\ldots,n\}$.
This simplicial complex is decomposable since it admits the recursive decomposition obtained by
splitting off the last facet $\{n-k,\ldots,n\}$ along the separator $\{n-k,\ldots,n-1\}$,
and iterating.

We relabel the parameters in~\eqref{eq:hierarchical model} by setting $\theta^{(\{1,\ldots,k+1 \})}_{i_{\ell-k}\ldots i_{\ell}} := \pi_{i_1 \ldots i_k} a^{(k+1)}_{i_{1}\ldots i_{k+1}} $ and
$\theta^{(\{\ell-k,\ldots,\ell\})}_{i_{\ell-k}\ldots i_{\ell}} := a^{(\ell)}_{i_{\ell-k}\ldots i_{\ell}} $ for
$\ell=k+2,\ldots,n$.
With these identifications, the product over facets in~\eqref{eq:hierarchical model}
equals $\textstyle{\prod_{\ell=k+1}^n \theta^{(\{\ell-k,\ldots,\ell\})}_{i_{\ell-k}\ldots i_{\ell}}}= \pi_{i_1 \ldots i_k} \textstyle{\prod_{\ell=k+1}^n a^{(\ell)}_{i_{\ell-k}\ldots i_{\ell}}} = p_{i_1\ldots i_n} $ (up to the normalizing constant). 
Hence the set $\phi \left(\textstyle{\mathbb{R}^{S^k} \times \prod_{\ell=k+1}^n \mathbb{R}^{S^{k+1}}}
\right) \cap \Delta_{|S|^n-1}$ is a hierarchical model for $\Gamma$, as desired. 
\end{proof}

Lemma~\ref{lem:markov_is_hierarchical} does not generalize to homogeneous models because the parameters corresponding to different simplices in the simplicial complex are identified, so a homogeneous $k$-th order multistate model is not a decomposable hierarchical model in the sense of Definition~\ref{def:hierarchical_model}. Instead, it is a subset of the decomposable hierarchical model from Lemma~\ref{lem:markov_is_hierarchical}.

From an algebraic perspective, forbidding transitions corresponds to setting certain joint path
probabilities identically equal to zero.
The following proposition makes this precise.
\begin{proposition}
\label{prop:restricted_model_slice}
Let $F$ denote the set of paths
$(i_1, \ldots, i_n)$  involving at least one forbidden transition. 
Then $\mathcal{V}_F$ can be identified with the coordinate slice
\[
\mathcal{V}_F
=
\mathcal{V}
\cap
\{\,p \mid p_{i_1 \ldots i_n}=0
\text{ for all }  (i_1, \ldots, i_n)\in F  \,\}.
\]
\end{proposition}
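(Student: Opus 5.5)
We prove the set equality $\mathcal{V}_F = \mathcal{V}\cap L$ by double inclusion, where $L=\{p \mid p_{i_1\ldots i_n}=0 \text{ for all } (i_1,\ldots,i_n)\in F\}$. Here $\mathcal{V}_F$ is the image of $\phi$ restricted to the coordinate subspace of parameter space in which every forbidden transition parameter $a^{(\ell)}_{i_{\ell-k}\cdots i_\ell}$ is set to zero. We work with the images themselves, as in the definitions of $\mathcal{V}$ and $\mathcal{V}_F$ preceding the statement, and afterwards note that the same holds for the Zariski closures.

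\emph{Inclusion $\subseteq$.} Take parameters with all forbidden transition parameters equal to zero. Then $\phi$ of these parameters lies in $\mathcal{V}$, since it is just a special choice of parameters. For a path $(i_1,\ldots,i_n)\in F$, the monomial $\pi_{i_1\ldots i_k}\prod_{\ell} a^{(\ell)}_{i_{\ell-k}\cdots i_\ell}$ contains at least one forbidden factor. That factor is zero, so $p_{i_1\ldots i_n}=0$ and the point lies in $L$.

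\emph{Inclusion $\supseteq$.} This is the main step. Let $p=\phi(\pi,a)\in\mathcal{V}$ with $p_f=0$ for all $f\in F$, where $(\pi,a)$ is arbitrary. Define new parameters $(\pi,a')$ by setting every forbidden $a'^{(\ell)}$ to zero and keeping all other parameters unchanged. We claim $\phi(\pi,a')=p$, and we check this coordinatewise.
\begin{itemize}
\item For a path not in $F$, no forbidden parameter appears in its monomial, so its value is unchanged.
\item For a path in $F$, the new value is $0$ because it contains a forbidden factor, and the old value was $p_f=0$ by assumption.
\end{itemize}
Hence $p\in\mathcal{V}_F$. The key point is that the sets of paths and of parameters interact monomially, so zeroing parameters only affects coordinates indexed by $F$.

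\emph{Closures and ideals.} If the statement is meant for Zariski closures or vanishing ideals, one further step is needed: show $\overline{\mathcal{V}_F}=\overline{\mathcal{V}}\cap L$. The inclusion $\subseteq$ is immediate. For $\supseteq$, I would use that $\phi$ is monomial, so $\overline{\mathcal{V}}$ is the toric variety of $\ker\phi^*$. Its intersection with a coordinate subspace $L$ is the closure of the image of the parametrization restricted to the parameters that appear only in the surviving monomials. This is the standard fact that coordinate slices of toric varieties given by a face-type support are again toric. Here it reduces to the previous argument applied to the torus-orbit closure.

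I expect this closure step to be the main obstacle: a toric variety intersected with a coordinate subspace need not be irreducible in general. I would first try to verify that the support of paths not in $F$ corresponds to a face of the relevant polytope. If that fails, I would state the proposition at the level of the images, which is what the definitions of $\mathcal{V}$ and $\mathcal{V}_F$ literally use, and where the argument above is complete.
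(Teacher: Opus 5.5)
Your double-inclusion argument is correct and is essentially the paper's own proof. The paper likewise notes that zeroing the forbidden parameters forces $p_f=0$ on $F$, and conversely that for $p\in\mathcal{V}$ with $p_f=0$ on $F$ those parameters can be reset to zero without changing $p$; your coordinatewise check makes this explicit, and your closure step is unnecessary because $\mathcal{V}$ and $\mathcal{V}_F$ are defined as images of $\phi$, not as Zariski closures.
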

\begin{proof}
For the inclusion ``$\subseteq$'', let $p\in\mathcal{V}_F$.
By construction, $p$ arises from the parametrization of $\mathcal{V}$ with all
forbidden transition parameters set to zero.
Hence $p$ lies in $\mathcal{V}$ and satisfies $p_{i_1 \ldots i_n}=0$ for all
$(i_1, \ldots, i_n)\in F$, proving the left-hand inclusion.

For the inclusion ``$\supseteq$'', let $p$ lie in the right-hand side.
Then $p$ admits a parametrization in $\mathcal{V}$, possibly with nonzero values
assigned to parameters corresponding to forbidden transitions.
Since $p_{i_1 \ldots i_n}=0$ for all
 $(i_1, \ldots, i_n)\in F$, these
parameters may be set to zero without changing $p$.
The resulting parameter choice lies in the parameter space of $\mathcal{V}_F$,
so $p\in\mathcal{V}_F$.
\end{proof}

\subsection{Implicit description} \label{sec:implicit_description_nonhomogeneous_subsection}

We now describe the vanishing ideal of the general nonhomogeneous $k$-th order multistate Markov model when the statistical constraints on the parameters are disregarded.
Since this model is hierarchical as shown in Section~\ref{sec:hierarchical_models}, its vanishing ideal is generated by binomials corresponding to
conditional independence statements between appropriate blocks of variables.

\begin{proposition}
\label{prop:generating set general}
The vanishing ideal $\ker(\phi^*)$ of the general $k$-th order multistate Markov model is generated by polynomials of the form
\begin{align*}
&p_{i_1\ldots i_r,\, j_{r+1}\ldots j_{r+k},\, s_{r+k+1}\ldots s_n}\,
 p_{i'_1\ldots i'_r,\, j_{r+1}\ldots j_{r+k},\, s'_{r+k+1}\ldots s'_n} \\
&\qquad -
p_{i_1\ldots i_r,\, j_{r+1}\ldots j_{r+k},\, s'_{r+k+1}\ldots s'_n}\,
 p_{i'_1\ldots i'_r,\, j_{r+1}\ldots j_{r+k},\, s_{r+k+1}\ldots s_n},
\end{align*}
or, equivalently, in block notation,
\[
p_{IJS}p_{I'JS'} - p_{IJS'}p_{I'JS},
\]
where $|I|=|I'|=r$, $|J|=k$, and $|S|=|S'|=n-k-r$. Here
\[
I=(i_1,\ldots,i_r), \
I'=(i'_1,\ldots,i'_r), \
J=(j_{r+1},\ldots,j_{r+k}), \
S=(s_{r+k+1},\ldots,s_n), \
S'=(s'_{r+k+1},\ldots,s'_n),
\]
where $I$, $I'$, $J$, $S$, and $S'$ correspond to consecutive blocks of positions in a path of length $n$.
These binomials correspond to exchanging the values of the blocks $I$ and $I'$ while keeping the
conditioning block $J$ fixed.
\end{proposition}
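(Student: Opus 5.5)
Via Lemma~\ref{lem:markov_is_hierarchical}, $\ker(\phi^*)$ is the toric ideal of the decomposable hierarchical model whose complex $\Gamma$ on $[n]$ has facets $\{\ell-k,\ldots,\ell\}$, $\ell=k+1,\ldots,n$. First we check the ideals really agree. The first facet parameter $\theta^{(\{1,\ldots,k+1\})}=\pi_{i_1\ldots i_k}a^{(k+1)}_{i_1\ldots i_{k+1}}$ ranges over all of $\mathbb{R}^{S^{k+1}}$ as the parameters vary. Hence the image of $\phi$ equals the image of the unnormalized hierarchical parametrization $p_i=\prod_F\theta^{(F)}_{i_F}$, and the two monomial maps have the same kernel. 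Concretely, both kernels equal the set of integer vectors $u$ with $Au=0$, where $A$ records the $(k+1)$-window marginals of a path. The $\pi$ and $a^{(k+1)}$ exponents together are determined by the window $\{1,\ldots,k+1\}$, and vice versa, so the lattice kernels coincide (Remark~\ref{rem:matrix_rep_toric}).

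Then we invoke the Dobra / Diaconis--Sturmfels theorem \cite{dobra2003markov,diaconis1998algebraic}. It states that for a decomposable $\Gamma$ the toric ideal is generated by the quadratic binomials attached to the separators in a decomposition. For a decomposition $(\Gamma_1,B,\Gamma_2)$ with $V_1\setminus B$, $B$, $V_2\setminus B$ these are $p_{i_1 j i_2}p_{i_1' j i_2'}-p_{i_1 j i_2'}p_{i_1' j i_2}$, with $j$ indexing $B$. Applying this recursively to every reducible subcomplex gives the full generating set. For our chain complex, the separators of the decompositions are exactly the consecutive blocks $J=\{r+1,\ldots,r+k\}$ for $r=1,\ldots,n-k-1$. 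The two sides are $I=\{1,\ldots,r\}$ and $S=\{r+k+1,\ldots,n\}$. This is precisely the stated family. For $n\le k+1$ the complex is a simplex and the ideal is zero, consistent with the statement.

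Next we verify containment directly. Under $\phi^*$, both monomials $p_{IJS}p_{I'JS'}$ and $p_{IJS'}p_{I'JS}$ contain the same multiset of windows of length $k+1$. Every window lies either inside $I\cup J$ or inside $J\cup S$ because $|J|=k$. So the windows are exchanged between the two paths without change, and the binomial maps to zero.

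The main obstacle is generation, i.e.\ checking that Dobra's theorem applies verbatim. It requires the recursive structure: the generators of the toric ideal of $\Gamma_1$ and $\Gamma_2$, lifted to $\Gamma$, together with the separator quadrics for $B$, generate. In our setting the lifted binomials of the subcomplex on $\{1,\ldots,n-1\}$ are again binomials of the stated shape with $S$ extended by a common or swapped suffix, so they fall in the family. If one prefers a self-contained route, one can use induction on $n$ and a Markov-basis/fiber connectivity argument. Two paths multisets with the same window marginals can be connected by moves swapping tails after a shared $k$-block, proceeding position by position from the left. The difficult step in that route is showing the connection always exists. We would cite Dobra's result rather than reprove it.
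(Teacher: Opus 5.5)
Your proposal is correct and follows the same route as the paper, which simply combines Lemma~\ref{lem:markov_is_hierarchical} with \cite[Theorem 4.5]{dobra2003markov} for the chain complex whose separators are the consecutive $k$-blocks. Your additional check that $\ker(\phi^*)$ literally coincides with the toric ideal of the unnormalized hierarchical parametrization fills in a step the paper leaves implicit, since the paper's lemma is stated only for the intersection with the simplex; you do this by absorbing $\pi$ into the first window parameter, so that the two exponent matrices have the same row space.
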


\begin{example}
To illustrate the notation in Proposition~\ref{prop:generating set general}, consider the nonhomogeneous illness--death model from Example~\ref{example:illness_death_model} with $k=1$ and paths of length $n=4$. For this model, the generators of the vanishing ideal can be written in the form
\[
p_{IJS}p_{I'JS'}-p_{IJS'}p_{I'JS},
\]
where $|I|=|I'|=r$ for $r=1$ or $r=2$, and $|J|=1$ and $|S|=|S'|=4-r-1$. Table~\ref{tab:illness-death-generators} shows how these generators correspond to the notation of Proposition~\ref{prop:generating set general}.
\begin{table}[h]
\centering
\caption{Generators of the nonhomogeneous ideal in Example~\ref{example:illness_death_model} in the notation of Proposition~\ref{prop:generating set general}. The generators of the vanishing ideal can be written in the form
$p_{IJS}p_{I'JS'}-p_{IJS'}p_{I'JS},$
where $|I|=|I'|=r$ for $r=1$ or $r=2$, and $|J|=1$ and $|S|=|S'|=4-r-1$. }
\label{tab:illness-death-generators}
\[
\begin{array}{c|c|c|c|c|c}
\text{Binomial} & I & I' & J & S & S' \\
\hline
p_{1112}p_{0122}-p_{1122}p_{0112}
& (1) & (0) & (1) & (1,2) & (2,2) \\[1mm]
p_{1111}p_{0122}-p_{1122}p_{0111}
& (1) & (0) & (1) & (1,1) & (2,2) \\[1mm]
p_{1111}p_{0112}-p_{1112}p_{0111}
& (1) & (0) & (1) & (1,1) & (1,2)  \\[1mm]
p_{1111}p_{0012}-p_{1112}p_{0011}
& (1,1) & (0,0) & (1) & (1) & (2) \\[1mm]
p_{0111}p_{0012}-p_{0112}p_{0011}
& (0,1) & (0,0) & (1) & (1) & (2) 
\end{array}
\]
\end{table} 
\end{example}

The result is a special case of a general result about vanishing ideals for decomposable hierarchical models~\cite[Theorem 4.5]{dobra2003markov}.
\begin{remark}\label{rem:nonhom-subideal}
We give a statistical interpretation of why polynomials in Proposition~\ref{prop:generating set general} vanish on the model, by rewriting the polynomial relations as an equality of ratios and verifying this equality using Bayes' formula.  When setting the polynomial in the statement of Proposition~\ref{prop:generating set general} to zero, moving the second term to the right-hand side and then dividing both sides by the second factors (assuming that they are nonzero), we get
\begin{align*}
\frac{p_{i_1\ldots i_r,\, j_{r+1}\ldots j_{r+k},\, s_{r+k+1}\ldots s_n}}{p_{i_1\ldots i_r,\, j_{r+1}\ldots j_{r+k},\, s'_{r+k+1}\ldots s'_n}} = 
\frac{p_{i'_1\ldots i'_r,\, j_{r+1}\ldots j_{r+k},\, s_{r+k+1}\ldots s_n}}{p_{i'_1\ldots i'_r,\, j_{r+1}\ldots j_{r+k},\, s'_{r+k+1}\ldots s'_n}}.
\end{align*}
In terms of path probabilities, the equality of ratios reads:
\begin{align*}
&\qquad\frac{
P\!\left(
X_1=i_1,\ldots,X_r=i_r,\,
X_{r+1}=j_{r+1},\ldots,X_{r+k}=j_{r+k},\,
X_{r+k+1}=s_{r+k+1},\ldots,X_n=s_n
\right)}
{
P\!\left(
X_1=i_1,\ldots,X_r=i_r,\,
X_{r+1}=j_{r+1},\ldots,X_{r+k}=j_{r+k},\,
X_{r+k+1}=s'_{r+k+1},\ldots,X_n=s'_n
\right)}
\\
&\qquad=\frac{
P\!\left(
X_1=i'_1,\ldots,X_r=i'_r,\,
X_{r+1}=j_{r+1},\ldots,X_{r+k}=j_{r+k},\,
X_{r+k+1}=s_{r+k+1},\ldots,X_n=s_n
\right)}
{
P\!\left(
X_1=i'_1,\ldots,X_r=i'_r,\,
X_{r+1}=j_{r+1},\ldots,X_{r+k}=j_{r+k},\,
X_{r+k+1}=s'_{r+k+1},\ldots,X_n=s'_n
\right)}.
\end{align*}

By Bayes' formula, the first ratio in the above equality simplifies as follows. 
\begin{align*} &
\frac{
P\!\left(
X_1=i_1,\ldots,X_r=i_r,\,
X_{r+1}=j_{r+1},\ldots,X_{r+k}=j_{r+k},\,
X_{r+k+1}=s_{r+k+1},\ldots,X_n=s_n
\right)}
{
P\!\left(
X_1=i_1,\ldots,X_r=i_r,\,
X_{r+1}=j_{r+1},\ldots,X_{r+k}=j_{r+k},\,
X_{r+k+1}=s'_{r+k+1},\ldots,X_n=s'_n
\right)} \\ 
& \qquad = \frac{
P\!\left(
X_{r+k+1}=s_{r+k+1},\ldots,X_n=s_n
\ \middle|\  X_1 = i_1, \ldots X_r = i_r,
X_{r+1}=j_{r+1},\ldots,X_{r+k}=j_{r+k}
\right)}
{
P\!\left(
X_{r+k+1}=s'_{r+k+1},\ldots,X_n=s'_n
\ \middle|\  X_1 = i_1, \ldots, X_r = i_r,
X_{r+1}=j_{r+1},\ldots,X_{r+k}=j_{r+k}
\right)} & \\ 
 & \qquad=
\frac{
P\!\left(
X_{r+k+1}=s_{r+k+1},\ldots,X_n=s_n
\ \middle|\ 
X_{r+1}=j_{r+1},\ldots,X_{r+k}=j_{r+k}
\right)}
{
P\!\left(
X_{r+k+1}=s'_{r+k+1},\ldots,X_n=s'_n
\ \middle|\ 
X_{r+1}=j_{r+1},\ldots,X_{r+k}=j_{r+k}
\right)}, \; \text{(Markov property).} &
\end{align*}
Similarly, applying Bayes' formula for the second ratio, we get the same expression, which shows that the two ratios are equal. This confirms that the generators of Proposition~\ref{prop:generating set general} vanish on the model, giving a direct probabilistic explanation; however, since it does not show that they generate the full vanishing ideal, this is not an alternative proof of Proposition~\ref{prop:generating set general}. 
\end{remark}

The description in Proposition~\ref{prop:generating set general} easily extends to models in which some transitions are forbidden.

\begin{corollary}
\label{lem:generators_restricted}
With the notation of Proposition~\ref{prop:restricted_model_slice}, the vanishing
ideal of $\mathcal{V}_F$ is obtained from that of $\mathcal{V}$ by adding the polynomials
$p_{i_1 \ldots i_n}$  for all $(i_1,\ldots,i_n) \in F$. 
\end{corollary}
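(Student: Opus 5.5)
The plan is to identify the vanishing ideal of $\mathcal{V}_F$ with the kernel of a modified pullback and then compare kernels directly. Let $\sigma$ be the ring homomorphism on the parameter ring that substitutes the forbidden transition parameters (and any forbidden initial parameters $\pi$) by $0$. Set $\phi_F^* := \sigma\circ\phi^*$. Since $\mathcal{V}_F$ is the image of the corresponding restricted parametrization over the infinite field $\mathbb{R}$, we have $I(\mathcal{V}_F)=\ker(\phi_F^*)$. The claim to prove is therefore
\[
\ker(\phi_F^*) \;=\; \ker(\phi^*) + \langle p_{i_1\ldots i_n} \mid (i_1,\ldots,i_n)\in F\rangle .
\]

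\textbf{The inclusion ``$\supseteq$''.} Since $\phi_F^*=\sigma\circ\phi^*$, every element of $\ker(\phi^*)$ lies in $\ker(\phi_F^*)$. Moreover, a path $(i_1,\ldots,i_n)\in F$ contains a forbidden transition. By definition of $\phi$, its monomial $\phi^*(p_{i_1\ldots i_n})$ then contains a forbidden parameter as a factor, so $\phi_F^*(p_{i_1\ldots i_n})=0$.

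\textbf{The inclusion ``$\subseteq$''.} Take $g\in\ker(\phi_F^*)$ and write $g=g_0+h$. Here $h\in\langle p_f : f\in F\rangle$ collects all terms involving a variable indexed by $F$, and $g_0$ is a polynomial only in the variables $p_{i_1\ldots i_n}$ with $(i_1,\ldots,i_n)\notin F$. Then $\phi_F^*(g_0)=0$. For an allowed path, the monomial $\phi^*(p_{i_1\ldots i_n})$ contains no forbidden parameter, so $\sigma$ acts as the identity on it. Hence $\phi^*(g_0)=\phi_F^*(g_0)=0$, so $g_0\in\ker(\phi^*)$ and $g\in\ker(\phi^*)+\langle p_f\rangle$. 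This settles the purely ``forbidden'' case, and it needs neither Proposition~\ref{prop:generating set general} nor toricity. Combined with Proposition~\ref{prop:restricted_model_slice}, it also shows that the coordinate slice is cut out ideal-theoretically by the naive sum, with no radical needed.

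\textbf{Main obstacle: absorbing states.} If the restriction also sets a parameter to $1$, for instance $a^{(\ell)}_{22}=1$ for an absorbing state, then $\sigma$ is no longer the identity on allowed monomials. The step ``$\phi_F^*(g_0)=0\Rightarrow\phi^*(g_0)=0$'' then needs an argument. It suffices to show the following for any two monomials $p^u,p^v$ in allowed variables: if $\sigma\phi^*(p^u)=\sigma\phi^*(p^v)$, then $\phi^*(p^u)=\phi^*(p^v)$. Since $\sigma\circ\phi^*$ restricted to the allowed variables is still a monomial map, its kernel is spanned by such binomials, so this suffices. To prove it, I would show that the exponent of each deleted parameter $a^{(\ell)}_{22\cdots2}$ in $\phi^*(p^u)$ is determined by the surviving exponents. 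In the nonhomogeneous model, this exponent counts the paths of $u$ that occupy the absorbing state at times $\ell-k,\ldots,\ell$. Because the state is absorbing, this equals the number of paths in $u$ that entered it at some time $\le \ell-k$. That number is visible through the surviving entry parameters (or the initial parameters) together with the total degree of $p^u$. I expect this bookkeeping to be the only delicate point. Should it fail for some degenerate configuration, the fallback is to state the corollary for restrictions that only set parameters to zero, which is exactly the setting of Proposition~\ref{prop:restricted_model_slice}.
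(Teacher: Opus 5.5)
Your proof is correct, but it follows a different route from the paper's.

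\textbf{The paper's route.} The paper argues geometrically. By Proposition~\ref{prop:restricted_model_slice}, $\mathcal{V}_F$ is the coordinate slice $\mathcal{V}\cap\{p_f=0 : f\in F\}$. The paper then invokes the principle that the vanishing ideal of an intersection is the sum of the vanishing ideals. In general that principle holds only up to radicals: $I(X\cap Y)=\sqrt{I(X)+I(Y)}$ over $\mathbb{C}$, and over $\mathbb{R}$ one even needs a real radical. So, as written, the paper's argument does not show that the plain sum $\ker(\phi^*)+\langle p_f : f\in F\rangle$ is already the vanishing ideal. For example, take $x=s^2$, $y=st$, $z=t^2$, with ideal $\langle xz-y^2\rangle$. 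The slice by $x=0$ alone has vanishing ideal $\langle x,y\rangle$, whereas $\langle xz-y^2,x\rangle=\langle x,y^2\rangle$.

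\textbf{Your route.} You work at the level of pullbacks: you write $\phi_F^*=\sigma\circ\phi^*$ and split $g=g_0+h$. This proves the ideal equality exactly, with no radical. It also shows why the equality holds here: $F$ contains \emph{all} coordinates whose monomials contain a zeroed parameter, and $\sigma$ fixes the monomials of the remaining coordinates. The argument needs neither toricity nor Proposition~\ref{prop:generating set general}. It therefore also covers the homogeneous case, which the paper asserts in its subsequent remark.

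\textbf{Two small points.}
\begin{enumerate}
\item If $\sigma$ also sets initial parameters $\pi$ to zero, you must include in $F$ the paths that start in a forbidden initial block. Otherwise the claimed equality fails.
\item Your absorbing-state paragraph goes beyond the statement. $\mathcal{V}_F$ is defined by setting forbidden parameters to zero only. The value $1$ for staying in an absorbing state comes from the sum-to-one constraints, which are disregarded in $\mathcal{V}$. So the zero-only case you have fully proved is exactly what the corollary asserts.
\end{enumerate}
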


\if 0
\begin{lemma}
\label{lem:generators_restricted}
Let $\mathcal{M}$ be the general $k$-th order multistate Markov model, and let
$\mathcal{M}_F$ be a model obtained by forbidding a specified collection of transitions.
Let $P_F$ denote the set of joint probability variables
$p_{i_1 \ldots i_n}$ corresponding to forbidden transitions.
Then the vanishing ideal of $\mathcal{M}_F$ is obtained from that of $\mathcal{M}$ by
adding the linear equations $p_{i_1 \ldots i_n}=0$ for all
$p_{i_1 \ldots i_n}\inP_F$ and eliminating the variables in $P_F$.
\end{lemma}
\fi

\begin{proof}
As discussed in the previous subsection, $\mathcal{M}_F$ is a slice of the general model $\mathcal{M}$. 
The vanishing ideal of the intersection of two algebraic sets is the sum of their vanishing ideals. 
In the present setting, this implies that the generators of the restricted model are obtained by 
adding the polynomials   $p_{i_1 \ldots i_n}$  for all $(i_1,\ldots,i_n) \in F$ to the generating set of the general model. 
\end{proof}

In particular, Corollary~\ref{lem:generators_restricted} implies that no new higher-degree generators arise from forbidden transitions beyond those
already present in the unrestricted model.

\begin{remark}
Note that Proposition~\ref{prop:restricted_model_slice} and  Corollary~\ref{lem:generators_restricted}
apply equally to homogeneous multistate Markov models, since their proofs
do not use the nonhomogeneous assumption. 
\end{remark}

\subsection{Inside probability simplex} \label{sec:summing_to_one_condition}

In this section we show that $\mathcal{M} =\mathcal{V} \cap \Delta_{|S|^n-1}$ in the nonhomogeneous setting. Here $\Theta$ is a set of parameters satisfying the constraints \eqref{constraints:korder}.

\begin{proposition}\label{prop:equal_ideal_with_constraints}
We have $\phi(\Theta) = \phi \left(\mathbb{R}^{S^k} \times \prod_{\ell=k+1}^n \mathbb{R}^{S^{k+1}}\right)  \cap \Delta_{|S|^n-1}$.
\end{proposition}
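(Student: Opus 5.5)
We prove the two inclusions separately. The inclusion ``$\subseteq$'' is immediate. If $(\pi,a)\in\Theta$, then every coordinate of $\phi(\pi,a)$ is a product of nonnegative numbers. Summing $p_{i_1\ldots i_n}$ over $i_n$, then $i_{n-1}$, and so on down to $i_{k+1}$, and using $\sum_{i_\ell}a^{(\ell)}_{i_{\ell-k}\cdots i_\ell}=1$ at each step, collapses the total sum to $\sum\pi_{i_1\ldots i_k}=1$. Hence $\phi(\Theta)\subseteq\Delta_{|S|^n-1}$.

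For ``$\supseteq$'', let $p=\phi(\pi,a)$ with arbitrary real parameters, and suppose $p\in\Delta_{|S|^n-1}$. The goal is to build new parameters $(\pi',a')\in\Theta$ with $\phi(\pi',a')=p$, by using the conditional probabilities of the distribution $p$ itself.
\begin{itemize}
\item For $\ell\ge k$, let $q_\ell(i_1,\ldots,i_\ell)=\sum_{i_{\ell+1},\ldots,i_n}p_{i_1\ldots i_n}$ be the marginal on the first $\ell$ positions. These are nonnegative, $q_n=p$, and $q_{\ell}\le q_{\ell-1}$ after summing over $i_\ell$.
\item Set $\pi'_{i_1\ldots i_k}=q_k(i_1,\ldots,i_k)$. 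These are nonnegative and sum to one.
\end{itemize}

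The key structural step is a factorization of the marginals. Define $R_\ell(i_{\ell-k+1},\ldots,i_\ell)$ as the sum over $i_{\ell+1},\ldots,i_n$ of $\prod_{m=\ell+1}^n a^{(m)}_{i_{m-k}\cdots i_m}$. This quantity depends only on the last $k$ states $i_{\ell-k+1},\ldots,i_\ell$. Then
\[
q_\ell(i_1,\ldots,i_\ell)=\pi_{i_1\ldots i_k}\Bigl(\prod_{m=k+1}^{\ell}a^{(m)}_{i_{m-k}\cdots i_m}\Bigr)R_\ell(i_{\ell-k+1},\ldots,i_\ell).
\]
Consequently, whenever $q_{\ell-1}(i_1,\ldots,i_{\ell-1})>0$,
\[
\frac{q_\ell(i_1,\ldots,i_\ell)}{q_{\ell-1}(i_1,\ldots,i_{\ell-1})}=\frac{a^{(\ell)}_{i_{\ell-k}\cdots i_\ell}\,R_\ell(i_{\ell-k+1},\ldots,i_\ell)}{R_{\ell-1}(i_{\ell-k},\ldots,i_{\ell-1})},
\]
and this ratio depends only on $(i_{\ell-k},\ldots,i_\ell)$. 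This is the algebraic form of the Markov property.

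We then define the new transition parameters as follows.
\begin{itemize}
\item Case 1: some prefix $(i_1,\ldots,i_{\ell-k-1})$ gives $q_{\ell-1}>0$. Set $a'^{(\ell)}_{i_{\ell-k}\cdots i_\ell}$ equal to the ratio above. It is well defined by the previous paragraph, nonnegative, and sums to $1$ over $i_\ell$ because $\sum_{i_\ell}q_\ell=q_{\ell-1}$.
\item Case 2: no such prefix exists. Set $a'^{(\ell)}_{i_{\ell-k}\cdots i_\ell}=1/|S|$.
\end{itemize}
In both cases $(\pi',a')\in\Theta$.

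It remains to check that $\phi(\pi',a')=p$.
\begin{itemize}
\item If $q_{n-1}(i_1,\ldots,i_{n-1})>0$, then $q_{\ell-1}(i_1,\ldots,i_{\ell-1})>0$ for all $\ell\le n$, since $q_{\ell-1}\ge q_{n-1}$. So every factor falls in Case 1, and the product telescopes to $q_n=p_{i_1\ldots i_n}$.
\item Otherwise, let $\ell_0$ be the smallest index at which $q_{\ell_0}(i_1,\ldots,i_{\ell_0})=0$. Then $p_{i_1\ldots i_n}\le q_{\ell_0}=0$. If $\ell_0=k$, then $\pi'=0$ and the product vanishes. If $\ell_0>k$, the factor $a'^{(\ell_0)}$ falls in Case 1 (with prefix witness $q_{\ell_0-1}>0$), and it equals $q_{\ell_0}/q_{\ell_0-1}=0$. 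In either case both sides vanish.
\end{itemize}

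The main obstacle is exactly this bookkeeping. The original parameters may be negative or zero, so the naive choice $a'=a$ fails, and the ratio $aR_\ell/R_{\ell-1}$ could be undefined or negative on tuples that never appear with positive mass. The plan handles this by defining $a'$ through the nonnegative marginals $q_\ell$ wherever some prefix carries mass, and arbitrarily elsewhere. The point to verify carefully is the consistency claim: the ratio $q_\ell/q_{\ell-1}$ is independent of the prefix $i_1,\ldots,i_{\ell-k-1}$ among prefixes with $q_{\ell-1}>0$, which follows from the factorization of $q_\ell$.
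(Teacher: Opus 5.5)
Your proof is correct and follows the paper's route. The paper also takes $\pi'$ and $a'^{(\ell)}$ to be ratios of prefix marginals, after first replacing the parameters by their absolute values, so its $a'^{(\ell)}$ is exactly your $q_\ell/q_{\ell-1}$, and it concludes by the same telescoping. Your factorization of $q_\ell$ through $R_\ell$ and your Case~1/Case~2 handling of vanishing marginals additionally supply the prefix-independence and zero-denominator checks that the paper's argument leaves implicit.
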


\begin{proof}
Firstly, 
\[
\phi(\Theta) \subseteq \phi \left(\mathbb{R}^{S^k} \times \prod_{\ell=k+1}^n \mathbb{R}^{S^{k+1}}\right)  \cap \Delta_{|S|^n-1},
\]
since $\Theta\subseteq \mathbb{R}^{S^k} \times \prod_{\ell=k+1}^n \mathbb{R}^{S^{k+1}}$ and the statistical constraints on the parameters guarantee that the image is contained in $\Delta_{|S|^n-1}$. 

To prove the reverse inclusion, let $p=(p_{i_1\ldots i_n})\in \phi (\mathbb{R}^{S^k} \times \prod_{\ell=k+1}^n \mathbb{R}^{S^{k+1}})  \cap \Delta_{|S|^n-1} $. More precisely, 
\[
p_{i_1 \ldots i_n} \geq 0, \,  \sum_{i_1,\ldots,i_n}p_{i_1 \ldots i_n} = 1, \text{ and } p_{i_1\ldots i_n}=\pi_{i_1\ldots i_k}\prod_{\ell=k+1}^n a^{(\ell)}_{i_{\ell-k}\cdots i_{\ell-1} i_{\ell}},
\]
where all the parameters are real numbers, but $(\pi_{i_1\ldots i_k},a^{(\ell)}_{i_{\ell-k}\ldots i_\ell})$ does not necessarily lie in $\Theta$.

We now construct new parameters
$(\pi'_{i_1\ldots i_k},a'^{(\ell)}_{i_{\ell-k}\ldots i_\ell})\in \Theta$, that are mapped to $(p_{i_1\ldots i_n})$ with $\phi$.

First, let
$b^{(k+1)}_{i_1\ldots i_{k+1}} = \left| \pi_{i_1\ldots i_k}a^{(k+1)}_{i_1\ldots i_{k+1}} \right|$, and $b^{(\ell)}_{i_{\ell-k}\ldots i_{\ell}}=\left|a^{(\ell)}_{i_{\ell-k}\ldots i_{\ell}} \right|$ for $\ell\geq k+2$. 
Then, \[
p_{i_1\ldots i_n}=\prod_{\ell=k+1}^n b^{(\ell)}_{i_{\ell-k}\cdots i_{\ell-1} i_{\ell}}.
\]

To simplify the notation in what follows, for a fixed path $(i_1, \ldots, i_n)$ and $\ell\geq 1$ denote
$$S_{i_1\ldots i_\ell} = \{i_1\}\times \{i_2\}\times \ldots \times \{i_\ell\} \times S^{n-\ell},$$
that is, the set of all paths whose first $\ell$ states are fixed.

Since the transition probabilities are allowed to depend on the time step $\ell$ in the nonhomogeneous setting, we may reconstruct the conditional
probabilities independently for each time level. 
Define the parameters as
\[
a'^{(\ell)}_{i_{\ell-k}\ldots i_\ell} = \frac{\sum_{(j_1, \ldots, j_n)\in S_{i_1\ldots i_{\ell}}} 
\prod_{t = k+1}^n b^{(t)}_{j_{t-k}\ldots  j_t}}{\sum_{(j_1, \ldots, j_n)\in S_{i_1\ldots i_{\ell-1}}} 
\prod_{t = k+1}^n b^{(t)}_{j_{t-k}\ldots  j_t}}, \quad 
\pi'_{i_1\ldots i_k} = \frac{\sum_{(j_1, \ldots, j_n)\in S_{i_1\ldots i_k}}\prod_{t=k+1}^n b^{(t)}_{j_{t-k}\ldots j_t}}{\sum_{(j_1,\ldots, j_n) \in S^n} \prod_{t=k+1}^n b^{(t)}_{j_{t-k}\ldots j_t}}.
\]
Since $b^{(\ell)}_{i_{\ell-k}\cdots i_{\ell}}$ were defined as absolute values, all parameters above are nonnegative. Furthermore,
\[
\sum_{i_\ell \in S}
a'^{(\ell)}_{i_{\ell-k}\cdots i_\ell}= \sum_{i_\ell \in S} \frac{\sum_{(j_1, \ldots, j_n)\in S_{i_1\ldots i_{\ell}}} 
\prod_{t = k+1}^n b^{(t)}_{j_{t-k}\ldots  j_t}}{\sum_{(j_1, \ldots, j_n)\in S_{i_1\ldots i_{\ell-1}}} 
\prod_{t = k+1}^n b^{(t)}_{j_{t-k}\ldots  j_t}} =  \frac{\sum_{(j_1, \ldots, j_n)\in S_{i_1\ldots i_{\ell-1}}} 
\prod_{t = k+1}^n b^{(t)}_{j_{t-k}\ldots  j_t}}{\sum_{(j_1, \ldots, j_n)\in S_{i_1\ldots i_{\ell-1}}} 
\prod_{t = k+1}^n b^{(t)}_{j_{t-k}\ldots  j_t}} = 1,
\]
$\forall \ell \in \{k+1,\ldots,n\}, \, \forall i_{\ell-k},\ldots,i_{\ell-1} \in S$,
and
\[
\sum_{(i_1,\ldots,i_k) \in S^k}\pi'_{i_1\cdots i_k} =\sum_{(i_1,\ldots,i_k) \in S^k} \frac{\sum_{(j_1, \ldots, j_n)\in S_{i_1\ldots i_k}}\prod_{t=k+1}^n b^{(t)}_{j_{t-k}\ldots j_t}}{\sum_{(j_1,\ldots, j_n) \in S^n} \prod_{t=k+1}^n b^{(t)}_{j_{t-k}\ldots j_t}}= \frac{\sum_{(j_1,\ldots, j_n) \in S^n} \prod_{t=k+1}^n b^{(t)}_{j_{t-k}\ldots j_t}}{\sum_{(j_1,\ldots, j_n) \in S^n} \prod_{t=k+1}^n b^{(t)}_{j_{t-k}\ldots j_t}}=1.
\]
Hence $(\pi',a')\in\Theta$.

To show that the new parameters give the required parametrization, we will first note some of their properties. The denominator of $\pi'_{i_1\ldots i_k}$ is 1, as it is the sum of all coordinates $(p_{i_1\ldots i_n})$. For any path $(i_1, \ldots, i_n)$, the numerator of $\pi'_{i_1\ldots i_k}$ coincides with the denominator of $a'^{(k+1)}_{i_1\ldots i_{k+1}}$.
Moreover, the numerator of $a'^{(\ell)}_{i_{\ell-k}\cdots i_{\ell}}$ coincides with the denominator of the next step transition probability $a'^{(\ell+1)}_{i_{\ell-k+1}\cdots i_{\ell+1}}$. Thus, the product
$\textstyle{\pi'_{i_1\ldots i_k} \prod_{\ell=k+1}^n a'^{(\ell)}_{i_{\ell-k}\cdots i_{\ell-1} i_{\ell}}}$ is telescoping: the denominator of each factor coincides with the numerator of the preceding factor, and the only term that is left is the numerator of the last transition probability. This allows us to show the required factorization:
\[\begin{split} \pi'_{i_1\ldots i_k} \prod_{\ell=k+1}^n a'^{(\ell)}_{i_{\ell-k}\cdots i_{\ell-1} i_{\ell}}&=
\frac{\sum_{(j_1, \ldots, j_n)\in S_{i_1\ldots i_k}}\prod_{t=k+1}^n b^{(t)}_{j_{t-k}\ldots j_t}}{\sum_{(j_1,\ldots, j_n) \in S^n} \prod_{t=k+1}^n b^{(t)}_{j_{t-k}\ldots j_t}}\prod_{\ell=k+1}^n \frac{\sum_{(j_1, \ldots, j_n)\in S_{i_1\ldots i_{\ell}}} 
\prod_{t = k+1}^n b^{(t)}_{j_{t-k}\ldots  j_t}}{\sum_{(j_1, \ldots, j_n)\in S_{i_1\ldots i_{\ell-1}}} 
\prod_{t = k+1}^n b^{(t)}_{j_{t-k}\ldots  j_t}} \\
&=\underbrace{\frac{\sum_{(j_1, \ldots, j_n)\in S_{i_1\ldots i_k}}\prod_{t=k+1}^n b^{(t)}_{j_{t-k}\ldots j_t}}{1}}_{\pi'_{i_1\ldots i_k}}
\underbrace{\frac{\sum_{(j_1, \ldots, j_n)\in S_{i_1\ldots i_{k+1}}} 
\prod_{t = k+1}^n b^{(t)}_{j_{t-k}\ldots  j_t}}{\sum_{(j_1, \ldots, j_n)\in S_{i_1\ldots i_{k}}}\prod_{t = k+1}^n b^{(t)}_{j_{t-k}\ldots  j_t}}}_{a'^{(k+1)}_{i_{1}\cdots i_{k+1}}}\\
&\ldots \underbrace{\frac{\sum_{(j_1,\ldots,j_n)\in S_{i_1\ldots i_{n-1}}} 
\prod_{t = k+1}^n b^{(t)}_{j_{t-k}\ldots  j_t}}{\sum_{(j_1, \ldots, j_n)\in S_{i_1\ldots i_{n-2}}} 
\prod_{t = k+1}^n b^{(t)}_{j_{t-k}\ldots  j_t}}}_{a'^{(n-1)}_{i_{n-k-1}\cdots i_{n-1}}}  
\underbrace{\frac{\prod_{t = k+1}^n b^{(t)}_{j_{t-k}\ldots  j_t}}{\sum_{(j_1,\ldots,j_n)\in S_{i_1\ldots i_{n-1}}} 
\prod_{t = k-1}^n b^{(t)}_{j_{t-k}\ldots  j_t}}}_{a'^{(n)}_{i_{n-k}\cdots i_{n}}}  \\
&=\prod_{t=k+1}^n b^{(t)}_{i_{t-k}\cdots i_{t-1} i_{t}}=p_{i_1\ldots i_n} .\qedhere
\end{split}
\]

\end{proof}

The proof does not extend to the homogeneous case because the definitions of $a'^{(\ell)}_{i_{\ell-k}\ldots i_\ell}$ and $\pi'_{i_1\ldots i_k}$ do not generalize.

\begin{remark} \label{rmk:nh_msm_as_graphical_models}
The $k$-th order nonhomogeneous Markov chain can also be seen as a discrete directed graphical model on $n$ vertices with an edge $i \rightarrow \ell$ for every $\ell \in \{k+1,\ldots,n\}$ and $i \in \{\ell-k,\ldots,\ell-1\}$. Proposition~\ref{prop:equal_ideal_with_constraints} follows also from~\cite[Theorem 3.27]{lauritzen1996graphical}.
\end{remark}

The statement of Proposition~\ref{prop:equal_ideal_with_constraints} extends verbatim to models with
forbidden transitions by intersecting both sides with the corresponding coordinate hyperplanes. On the other hand, although we have equality $I(\mathcal{M})=\textstyle{\sqrt{I(\mathcal{V}) + \left\langle 1-\sum_{i_1,\ldots, i_n}p_{i_1\ldots i_n} \right\rangle}}$ over the complex numbers~$\mathbb{C}$, we do not know whether this equality also holds
over the real numbers~$\mathbb{R}$.

\section{Homogeneous multistate Markov  models}
\label{sec:hom}

In the nonhomogeneous setting, multistate Markov models when the statistical constraints on the parameters are disregarded admit a clean algebraic description:
they are decomposable hierarchical models, and their vanishing ideals are generated by explicit
families of binomials arising from local Markov moves
(Section~\ref{sec:implicitization_nonhomogeneous}).
Imposing time homogeneity fundamentally changes this picture.
Although homogeneous multistate Markov models are still defined by monomial parametrizations,
the identification of transition parameters across time induces additional polynomial relations
among joint path probabilities.
As a consequence, homogeneous models generally fall outside the class of decomposable hierarchical
models, and their vanishing ideals are strictly larger than those of their nonhomogeneous counterparts.

This difference already appears in simple examples.
For instance, in the homogeneous illness-death model
(see column Prob (homogeneous), Table~\ref{tab:obsevations4}),
the elimination ideal contains binomial relations that are not implied
by the Markov property alone.
For example, while the binomial 
\[
p_{0012}p_{0111}-p_{0011}p_{0112}
\]
appears in the nonhomogeneous case, the additional
relations
\[
p_{0011}p_{0012}-p_{0001}p_{0112}
\qquad\text{and}\qquad
p_{0011}^2-p_{0001}p_{0111}
\]
reflect the imposed equality of transition probabilities across time points.

From an algebraic~perspective, these additional relations arise from identifying transition
parameters associated with different time indices.
Several families of binomial relations for homogeneous models can be derived directly from the
parametrization by exploiting shift invariance of the transition probabilities.
In particular, odds-ratio–type relations analogous to those in the nonhomogeneous case
vanish on all homogeneous Markov chains.
However, explicit computations show that these relations do not, in general, generate the full
vanishing ideal $\ker(\phi^*)$.

\begin{remark}
The homogeneous model is obtained from the corresponding nonhomogeneous model by imposing that transition probabilities do not depend on the time index. More precisely to pass from the nonhomogeneous model to the homogeneous model, in the parametrization of Section~\ref{sec:algebraic_description}, the parameters
$a^{(\ell)}_{i_{\ell-k}\cdots i_{\ell-1}i_\ell}$
are all replaced by
$a_{i_{\ell-k}\cdots i_{\ell-1}i_\ell}$ for all admissible time indices~$\ell$.
Consequently, every polynomial relation among the joint path probabilities that holds for the nonhomogeneous model also holds for the homogeneous model. In particular, the binomial generators described in Proposition~\ref{prop:generating set general} vanish on every homogeneous multistate Markov model.
Time-homogeneity imposes additional algebraic constraints. However, as illustrated by Example~\ref{example:illness_death_model}, these relations do not, in general, generate the entire vanishing ideal. Thus the homogeneous vanishing ideal should be viewed as an enlargement of the nonhomogeneous vanishing ideal obtained by imposing time homogeneity and all algebraic consequences of this identification. Proposition~\ref{prop:homogeneous binomials} provides a general family of binomial relations arising from these parameter identifications. 
\end{remark}

As a first step toward understanding the algebraic structure of homogeneous models,
we identify a general family of binomial relations that necessarily belong to the vanishing
ideal.
The following proposition provides a homogeneous analogue of
Proposition~\ref{prop:generating set general}.
  
\begin{proposition}
\label{prop:homogeneous binomials}
Let $\mathcal M$ be the $k$-th order homogeneous multistate Markov chain.
Fix two states $\psi,\eta$ and two consecutive blocks of positions in a path of length $k$
\[
\Gamma=(\gamma_1,\ldots,\gamma_k), \qquad
\Delta=(\delta_1,\ldots,\delta_k).
\]
Then the vanishing ideal $I(\mathcal M)$ contains all binomials of the form
\[
p_{I,\,\Gamma,\,\psi,\,\Delta,\,J}\,
p_{I',\,\Gamma,\,\eta,\,\Delta,\,J'}
-
p_{I,\,\Gamma,\,\eta,\,\Delta,\,J}\,
p_{I',\,\Gamma,\,\psi,\,\Delta,\,J'},
\]
where $I,I',J,J'$ are index blocks such that the concatenations
\[
I\,\Gamma\,\psi\,\Delta\,J,\quad
I'\,\Gamma\,\psi\,\Delta\,J',\quad
I\,\Gamma\,\eta\,\Delta\,J,\quad
\text{and}\quad
I'\,\Gamma\,\eta\,\Delta\,J'
\]
define paths of length $n$.
If one of the blocks $I$ or $I'$ is empty (respectively, $J$ or $J'$ is empty),
then the corresponding block $\Gamma$ (respectively, $\Delta$) 
may have length strictly smaller than $k$.
\end{proposition}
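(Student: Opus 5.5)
Since $I(\mathcal M)$ contains $\ker(\phi^*)$ for the homogeneous parametrization, it suffices to show that each binomial maps to zero under the homogeneous pullback $\phi^*$. That is, the two monomials $\phi^*(p_{I\Gamma\psi\Delta J}\,p_{I'\Gamma\eta\Delta J'})$ and $\phi^*(p_{I\Gamma\eta\Delta J}\,p_{I'\Gamma\psi\Delta J'})$ must coincide as multisets of parameters $\pi_{\cdot}$ and $a_{\cdot}$. The plan is to factor each path monomial into three parts: a ``left'' part that only sees $I\Gamma$ (or $I'\Gamma$), a ``middle'' part that sees the window containing the distinguished position, and a ``right'' part that only sees $\Delta J$ (or $\Delta J'$). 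We then check that the middle parts swap between the two products while the left and right parts stay attached to the same path prefix or suffix.

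\textbf{Factorization.} Let $m$ be the position of $\psi$ (resp. $\eta$) in a path of the form $I\Gamma x\Delta J$, where $x\in\{\psi,\eta\}$. The path monomial is $\pi_{i_1\ldots i_k}\prod_{\ell=k+1}^n a_{i_{\ell-k}\cdots i_\ell}$. Each factor involves a window of $k+1$ consecutive positions, or $k$ positions for $\pi$. We sort the factors into three classes. The first class consists of windows ending before $m$; these lie entirely within $I\Gamma$. The second class consists of windows containing $m$; since a window of length $k+1$ containing $m$ lies within $\Gamma x\Delta$ (as $|\Gamma|=|\Delta|=k$), these factors depend only on $\Gamma,x,\Delta$. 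The third class consists of windows starting after $m$; these lie entirely within $\Delta J$.

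The key point is homogeneity. The $a$-parameters carry no time index, so the middle factors for the path $I\Gamma x\Delta J$ and for $I'\Gamma x\Delta J'$ are the same monomial $M_x(\Gamma,\Delta)$, even though $m$ may sit at different absolute positions in the two paths. This is precisely where the nonhomogeneous proof would fail, and why the $\Gamma,\Delta$ blocks can slide in position. Write $\phi^*(p_{I\Gamma x\Delta J})=L(I\Gamma)\,M_x(\Gamma,\Delta)\,R(\Delta J)$, where $L$ collects the first class together with $\pi$, and $R$ collects the third class. Both products in the binomial then equal
\[
L(I\Gamma)L(I'\Gamma)\,M_\psi M_\eta\,R(\Delta J)R(\Delta J'),
\]
and the binomial lies in $\ker\phi^*$.

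\textbf{Main obstacle.} I expect the bookkeeping at the boundaries to be the main obstacle. The cases are: when $m\le k$, so that the initial factor $\pi_{i_1\ldots i_k}$ itself contains position $m$; and when $I$ or $J$ is empty and $\Gamma$ or $\Delta$ is shorter than $k$, as allowed in the statement. I would handle these by treating $\pi$ as a window of length $k$ placed at the start. If $I$ is empty then both paths in the relevant pair begin with $\Gamma x$, so $\psi$ or $\eta$ sits at the same absolute position in the paths sharing that prefix. More carefully, if $I$ is empty but $I'$ is not, the block lengths force $|\Gamma|$ to be consistent in both paths; I would check that $\Gamma$ must then have full length $k$ for the windows around $m$ in $I'\Gamma x\Delta J'$ to stay inside $\Gamma x\Delta$. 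In the degenerate case where both $I$ and $I'$ are empty, the middle factor containing $\pi_{\Gamma x\ldots}$ is identical in both paths anyway. The symmetric argument applies on the right with $\Delta$ and $J$. After this case check, the multiset equality above goes through verbatim.
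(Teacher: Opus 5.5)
Your core argument is the paper's. The paper compares the ratios $p_{I\Gamma\psi\Delta J}/p_{I\Gamma\eta\Delta J}$ and $p_{I'\Gamma\psi\Delta J'}/p_{I'\Gamma\eta\Delta J'}$ and cancels every factor whose window misses the distinguished position. By homogeneity both ratios reduce to the same quotient of the $k+1$ transition parameters whose windows contain that position, and the paper then cross-multiplies. Your factorization $L(I\Gamma)\,M_x(\Gamma,\Delta)\,R(\Delta J)$ is the same computation done multiplicatively, with the small advantage that it never divides by coordinates that may vanish.

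The place to be more decisive is the boundary clause. When exactly one of $I,I'$ is empty, the hypotheses do not force $|\Gamma|=k$; the last sentence of the statement explicitly permits $|\Gamma|<k$ there. So this is not something you can ``check''. It is a genuine restriction, and without it the binomial is not in $I(\mathcal M)$.

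For example, take $k=1$, $n=3$, $S=\{0,1\}$, $I=(0)$, $I'=\Gamma=J=\emptyset$, $\Delta=J'=(0)$, $\psi=0$, $\eta=1$. The binomial is $p_{000}p_{100}-p_{010}p_{000}=p_{000}(p_{100}-p_{010})$. The factors through the distinguished position are $\pi_x a_{x0}$ in one path but $a_{0x}a_{x0}$ in the other, where the window $a_{0x}$ reaches into $I$. At $\pi_0=\pi_1=a_{10}=a_{11}=\tfrac12$, $a_{00}=0.9$, $a_{01}=0.1$ the binomial evaluates to $0.405\cdot 0.2\neq 0$. Symmetrically, $I=(0)$, $I'=\Delta=J=\emptyset$, $\Gamma=J'=(0)$ gives $p_{000}(p_{010}-p_{001})\notin I(\mathcal M)$.

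So the true statement, and the one your argument proves, requires $I=I'=\emptyset$ for a short $\Gamma$ and $J=J'=\emptyset$ for a short $\Delta$. In those cases $x$ sits at the same position in all four paths. State this restriction explicitly rather than presenting it as a verification. The paper's proof misses the same point when it dismisses the boundary cases with ``the same cancellation argument applies.''
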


\begin{proof}
Assume $I,J,I',J'$ are nonempty (the empty-block case is identical). Fix states
$\psi,\eta$ and fixed states $\gamma_1,\dots,\gamma_k$ and $\delta_1,\dots,\delta_k$.
We first treat the interior case, i.e.\ when the symbol $\psi$ (or $\eta$) occurs at a position
$r$ with $k+1\le r,\ell\le n-k$, so that the
corresponding joint-probability entries are
\[
p_{i_1,\dots,i_{r-k-1},\gamma_1,\dots,\gamma_k,\psi,\delta_1,\dots,\delta_k,i_{r+k+1},\dots,i_n}
\quad\text{and}\quad
p_{j_1,\dots,j_{\ell-k-1},\gamma_1,\dots,\gamma_k,\psi,\delta_1,\dots,\delta_k,j_{\ell+k+1},\dots,j_n},
\]
and likewise with $\psi$ replaced by $\eta$.

\medskip

Since we are in the \emph{homogeneous} $k$-th order Markov model, the joint probability of a path
factors as
\[
p_{x_1,\dots,x_n}
=\pi_{x_1,\dots,x_k}\,\prod_{t=k+1}^{n} a_{x_{t-k}\cdots x_{t-1}x_t},
\]
where the transition parameters $a_{x_{t-k}\cdots x_{t-1}x_t}$ do \emph{not} depend on $t$.

Consider the ratio of the two path probabilities that differ only by $\psi$ versus $\eta$ at the
position $r$. By cancellation of all common factors in the above product, we get
\begin{align*}
\frac{
p_{i_1,\dots,i_{r-k-1},\gamma_1,\dots,\gamma_k,\psi,\delta_1,\dots,\delta_k,i_{r+k+1},\dots,i_n}
}{
p_{i_1,\dots,i_{r-k-1},\gamma_1,\dots,\gamma_k,\eta,\delta_1,\dots,\delta_k,i_{r+k+1},\dots,i_n}
}
&=
\frac{
a_{\gamma_1\cdots\gamma_k\psi}\,
a_{\gamma_2\cdots\gamma_k\psi\delta_1}\cdots
a_{\psi\delta_1\cdots\delta_k}
}{
a_{\gamma_1\cdots\gamma_k\eta}\,
a_{\gamma_2\cdots\gamma_k\eta\delta_1}\cdots
a_{\eta\delta_1\cdots\delta_k}
}.
\end{align*}
Exactly the same cancellation argument for the ratio at the position $\ell$ yields
\begin{align*}
\frac{
p_{j_1,\dots,j_{\ell-k-1},\gamma_1,\dots,\gamma_k,\psi,\delta_1,\dots,\delta_k,j_{\ell+k+1},\dots,j_n}
}{
p_{j_1,\dots,j_{\ell-k-1},\gamma_1,\dots,\gamma_k,\eta,\delta_1,\dots,\delta_k,j_{\ell+k+1},\dots,j_n}
}
&=
\frac{
a_{\gamma_1\cdots\gamma_k\psi}\,
a_{\gamma_2\cdots\gamma_k\psi\delta_1}\cdots
a_{\psi\delta_1\cdots\delta_k}
}{
a_{\gamma_1\cdots\gamma_k\eta}\,
a_{\gamma_2\cdots\gamma_k\eta\delta_1}\cdots
a_{\eta\delta_1\cdots\delta_k}
}.
\end{align*}
Hence, the two ratios are equal.
Equivalently, by Bayes' formula, the equality holds
if and only if
\begin{align*}
P\!\left(
X_r=\psi \mid 
X_1=i_1,\ldots,X_{r-k-1}=i_{r-k-1},\,
X_{r-k}=\gamma_1,\ldots,X_{r-1}=\gamma_k, \right. \\
\left.
X_{r+1}=\delta_1,\ldots,X_{r+k}=\delta_k,\,
X_{r+k+1}=i_{r+k+1},\ldots,X_n=i_n
\right)
\\
=
P\!\left(
X_\ell=\psi \mid 
X_1=j_1,\ldots,X_{\ell-k-1}=j_{\ell-k-1},\,
X_{\ell-k}=\gamma_1,\ldots,X_{\ell-1}=\gamma_k, \right. \\
\left.
X_{\ell+1}=\delta_1,\ldots,X_{\ell+k}=\delta_k,\,
X_{\ell+k+1}=j_{\ell+k+1},\ldots,X_n=j_n
\right)
\\
=
P\!\left(
X_r=\psi \mid
X_{r-k}=\gamma_1,\ldots,X_{r-1}=\gamma_k,\,
X_{r+1}=\delta_1,\ldots,X_{r+k}=\delta_k
\right)
\\
=
P\!\left(
X_\ell=\psi \mid
X_{\ell-k}=\gamma_1,\ldots,X_{\ell-1}=\gamma_k,\,
X_{\ell+1}=\delta_1,\ldots,X_{\ell+k}=\delta_k
\right).
\end{align*}
By the $k$-th order Markov property, conditioning on the full configuration may be
 reduced to conditioning on the $k$ nearest neighbors, 
\[
P(X_r=\psi \mid X_1,\ldots,X_{r-1},X_{r+1},\ldots,X_n)
=
P(X_r=\psi \mid X_{r-k},\ldots,X_{r-1},X_{r+1},\ldots,X_{r+k}),
\]
for $k+1\le r\le n-k$. Moreover, by homogeneity, this conditional distribution depends only on the local block
$(\gamma_1,\ldots,\gamma_k,\,\psi,\,\delta_1,\ldots,\delta_k)$ and is therefore independent
of the absolute position~$r$.
This is precisely the defining property of a homogeneous $k$-th order Markov chain.

Finally, clearing denominators yields the binomial
\[
p_{I,\,\gamma_1\ldots\gamma_k,\,\psi,\,\delta_1\ldots\delta_k,\,J}\,
p_{I',\,\gamma_1\ldots\gamma_k,\,\eta,\,\delta_1\ldots\delta_k,\,J'}
-p_{I,\,\gamma_1\ldots\gamma_k,\,\eta,\,\delta_1\ldots\delta_k,\,J}\,
p_{I',\,\gamma_1\ldots\gamma_k,\,\psi,\,\delta_1\ldots\delta_k,\,J'},
\]
which therefore belongs to $I(\mathcal{M})$.

For boundary cases (when $r\le k$ or $r>n-k$), the same cancellation argument applies:
the ratio is a product of the transition factors whose windows intersect $r$, and the set of
affected windows is again determined by the available neighbors.
Thus the ratio still depends only on the local configuration and not on the global position,
and the same cross-multiplication yields the stated binomial.
\end{proof}

\begin{example}
Consider the homogeneous illness--death model of Example~\ref{example:illness_death_model} with $n=4$ and $k=1$. Proposition~\ref{prop:homogeneous binomials} produces the binomial
\[
p_{0022} p_{0122} - p_{0012} p_{0222}
\]
by taking
\[
I=(0),\ I'=\emptyset,\ \Gamma=(0),\ \psi=2,\ \eta=1,\ \Delta=(2),\ J=\emptyset,\ J'=(2).
\]
Then, we have
\[
( I\Gamma\psi\Delta J,\ I'\Gamma\eta\Delta J',\
I\Gamma\eta\Delta J,\ I'\Gamma\psi\Delta J')
=(0022,0122,0012,0222),
\]
hence Proposition~\ref{prop:homogeneous binomials} yields that $p_{0022}p_{0122}-p_{0012}p_{0222}$ is in the ideal. This binomial does not appear among the generators of the corresponding nonhomogeneous ideal. Several further binomials appearing in the homogeneous ideal as computed in Example~\ref{example:illness_death_model} can be obtained in the same way. Table~\ref{tab:homogeneous-binomials} records the corresponding choices of the blocks $I$, $I'$, $\Gamma$, $\Delta$, $J$, and $J'$ for a selection of such binomials. The binomials in Table~\ref{tab:homogeneous-binomials} do not generate the entire vanishing ideal in Example~\ref{example:illness_death_model} in the homogeneous case. 
\begin{table}
\centering
\[
\begin{array}{c|c|c|c|c|c|c|c|c}
\text{Binomial} & I & I' & \Gamma & \psi & \eta & \Delta & J & J'\\
\hline
p_{1122}^2-p_{1112}p_{1222}
& (1) & \emptyset & (1) & 2 & 1 & (2) & \emptyset & (2)\\[1mm]
p_{0122}p_{1122}-p_{0112}p_{1222}
& (0) & \emptyset & (1) & 2 & 1 & (2) & \emptyset & (2)\\[1mm]
p_{0022}p_{0122}-p_{0012}p_{0222}
& (0) & \emptyset & (0) & 2 & 1 & (2) & \emptyset & (2)\\[1mm]
p_{1112}p_{0122}-p_{0112}p_{1122}
& (1) & (0) & (1) & 1 & 2 & (2) & \emptyset & \emptyset\\[1mm]
p_{0022}^2-p_{0002}p_{0222}
& (0) & \emptyset & (0) & 2 & 0 & (2) & \emptyset & (2)\\[1mm]
p_{0012}p_{0022}-p_{0002}p_{0122}
& (0) & \emptyset & (0) & 1 & 0 & (2) & \emptyset & (2)\\[1mm]
p_{0111}p_{0012}-p_{0011}p_{0112}
& \emptyset & \emptyset & (0) & 1 & 0 & (1) & (1) & (2)\\[1mm]
p_{0011}p_{0012}-p_{0001}p_{0112}
& (0) & \emptyset & (0) & 1 & 0 & (1) & \emptyset & (2)\\[1mm]
p_{0011}^2-p_{0001}p_{0111}
& (0) & \emptyset & (0) & 1 & 0 & (1) & \emptyset & (1)
\end{array}
\]
\caption{Examples of generators of the homogeneous ideal written in the notation of Proposition~\ref{prop:homogeneous binomials}.}
\label{tab:homogeneous-binomials}
\end{table}
\end{example}

\begin{remark}\label{rem:linear}
For a homogeneous $k$-th order multistate Markov model, the path probability
$p_{i_1\ldots i_n}$ depends only on the sufficient statistics, namely the counts
of length-$(k+1)$ contiguous blocks.
Consequently, if a permutation $\sigma$ of the time indices preserves these
block counts and the initial $k$-block, then
$p_{i_1\ldots i_n}=p_{\sigma(i_1)\ldots\sigma(i_n)}$
for all parameter values, and the linear binomial
$p_{i_1\ldots i_n}-p_{\sigma(i_1)\ldots\sigma(i_n)}$ lies in the vanishing ideal
$I(\mathcal{M})$.
\end{remark}

Computations indicate that the binomials in
Proposition~\ref{prop:homogeneous binomials} and Remark~\ref{rem:linear}
do not generate the vanishing ideal $I(\mathcal {V})=\ker(\phi^*)$ in the homogeneous case. This is illustrated in  Example~\ref{example:strict_inclusion_homogeneous}, where the Gr\"obner basis of $\ker(\phi^*)$ contains binomials of degree three that cannot be obtained from Proposition~\ref{prop:homogeneous binomials} and Remark~\ref{rem:linear}.
Moreover, the following example shows that the
inclusion of ideals in~\eqref{eqn:inclusion_of_inequalities}  can be strict.

\begin{example} \label{example:strict_inclusion_homogeneous} 
In this example we show that the inclusion of ideals~\eqref{eqn:inclusion_of_inequalities} 
\[
\sqrt{\ker (\phi^*)+\langle 1-\sum_{i_1,\ldots, i_n}p_{i_1\ldots i_n}}\rangle\subseteq I(\phi(\Theta)).
\]
can be strict in the homogeneous setting.
Consider the case $n=3$, $k=1$, with binary state space $S=\{0,1\}$.
Let
\[
I = \langle p_{i_1\ldots i_n} - 
\pi_{i_1\ldots i_k}\prod_{\ell=k+1}^n a_{i_{\ell-k}\ldots i_{\ell}}, \sum_{i_1\ldots i_k} \pi_{i_1\ldots i_k}-1, \sum_{i_{k+1}}a_{i_{1}\ldots i_k i_{k+1}}-1\rangle,
\]
be the ideal defining the homogeneous multistate Markov parametrization together with the normalization
constraints. Then 
 $I(\phi(\Theta)) = I \cap \mathbb{R}[p_{i_1\ldots i_n}]$.  Using \texttt{Macaulay2}, we find a minimal generating set 
 of this ideal:
 \begin{equation*}
 \begin{gathered}
 I(\phi(\Theta)) = \langle p_{000}+p_{001}+p_{010}+p_{011}+p_{100}+p_{101}+p_{110}+p_{111}-1,
 p_{011}p_{110}-p_{010}p_{111},\\ p_{011}p_{100}-p_{001}p_{110},
p_{011}p_{101}+p_{001}p_{110}-p_{010}p_{110}-p_{010}p_{111},\\
 p_{110}^2-p_{100}p_{111}-p_{101}p_{111}+p_{110}p_{111},\\
p_{010}p_{100}+p_{010}p_{101}+p_{100}p_{101}+p_{101}^2+p_{010}p_{110}+p_{101}p_{110}+p_{010}p_{111}+p_{101}p_{111}-p_{101},\\ p_{001}p_{100}+p_{001}p_{101}+p_{010}p_{101}+p_{100}p_{101}+p_{101}^2-\\-p_{001}p_{110}+p_{010}p_{110}+p_{101}p_{110}+p_{010}p_{111}+p_{101}p_{111}-p_{101},\\
p_{010}^2+2p_{010}p_{011}+p_{011}^2+p_{010}p_{101}-2p_{001}p_{110}+2p_{010}p_{110}-\\-p_{001}p_{111}+3p_{010}p_{111}+p_{011}p_{111}+p_{001}-p_{010}-p_{011}  
 \rangle.
 \end{gathered}
  \end{equation*}
On the other hand, a minimal generating set 
of $\sqrt{\ker(\phi^*)+\langle p_{000}+\cdots+p_{111}-1\rangle}$ is
\begin{equation*}
    \begin{gathered}
        \langle p_{000}+p_{001}+p_{010}+p_{011}+p_{100}+p_{101}+p_{110}+p_{111}-1,\\
        p_{011}p_{110}-p_{010}p_{111}, p_{011}p_{100}-p_{001}p_{110},\\ p_{001}p_{100}+p_{001}p_{101}+p_{010}p_{101}+p_{011}p_{101}+p_{100}p_{101}+p_{101}^2+p_{101}p_{110}+p_{101}p_{111}-p_{101}
     \rangle.
    \end{gathered}
\end{equation*}

A \texttt{Macaulay2} computation shows that 
\[
I(\phi(\Theta)) \neq \sqrt{\ker(\phi^*)+\langle p_{000}+\cdots+p_{111}-1\rangle}.
\]
Moreover, one can check that only the first three elements of the generators of $I(\phi(\Theta))$ belong to $\sqrt{\ker(\phi^*)+\langle p_{000}+\cdots+p_{111}-1\rangle}$.

Since both of these ideals are radical, by the ideal-variety correspondence, they define different sets over complex numbers. However, at this point we do not know if these sets are different also inside the probability simplex. We will show that this is the case in Section~\ref{subsec:homogeneous_MLE} by showing that maximum likelihood estimates over both sets are different.
\end{example}

The results of this section naturally lead to several open questions, which are listed in Section~\ref{sec:discussion}.

\section{Maximum likelihood estimation and likelihood geometry}
\label{sec:mle}

In this section, we compare classical statistical maximum likelihood estimation with algebraic approaches. We show
that for homogeneous multistate Markov models the algebraic formulation leads to more challenging questions than the
classical statistical approach. In Proposition~\ref{prop:model_is_not_equal_to_variety_interesected_with_probability_simplex} we show that, in general, $\mathcal{M} \neq \mathcal{V} \cap \Delta_{|S|^n-1}$ for homogeneous multistate Markov models. This result is obtained by comparing maximum likelihood estimates for the two models. 

\subsection{Nonhomogeneous multistate Markov model}

Let $\mathbf{X}^{(m)}=(X^{(m)}_1,\dots,X^{(m)}_{L+1})$, with $L>k$, for $m=1,\dots,M$, be $M$ independent realizations of a nonhomogeneous $k$-th order Markov chain with initial probabilities $\pi_{i_1\dots i_k} \geq 0$ and transition probabilities $a^{(\ell)}_{i_{\ell-k}\dots i_{\ell-1}i_\ell} \ge  0$.  Note that the forbidden transitions are accounted for through the conditions on the transition probabilities.  
For the $k$-th order nonhomogeneous model, the likelihood function of the parameters $(\pi,a)$ given the observed data $(\mathbf{X}^{(1)},\dots,\mathbf{X}^{(M)})$ is
\begin{align}
\label{eqn:likelihood_function_nonhomogeneous}
P(\mathbf{X}^{(1)},\dots,\mathbf{X}^{(M)})
&=
\prod_{m=1}^{M}
\left(
\pi_{X^{(m)}_1\dots X^{(m)}_k}
\prod_{\ell=k+1}^{L+1}
a^{(\ell)}_{X^{(m)}_{\ell-k}\dots X^{(m)}_{\ell-1}X^{(m)}_\ell}
\right).
\end{align}

The maximum likelihood estimators of the initial distribution and the transition probabilities are then the empirical proportions as given below.
\begin{align}
    \hat{\pi}_{i_1 \dots i_k} & = \frac{1}{M} \sum\limits_{m=1}^M  {\bf 1}( 
    X_{1}^{(m)} = i_1, \dots, X_k^{(m)} = i_k)  \nonumber \\
    \hat{a}_{i_{\ell-k}\dots i_{\ell-1} i_\ell}^{(\ell)} & = \frac{\sum\limits_{m=1}^M  {\bf 1}( 
    X_{\ell-k}^{(m)} = i_{\ell-k}, \dots, X_{\ell-1}^{(m)} = i_{\ell-1}, X_{\ell}^{(m)} = i_\ell)}{\sum\limits_{m=1}^M  {\bf 1}( 
    X_{\ell-k}^{(m)} = i_{\ell-k}, \dots, X_{\ell-1}^{(m)} = i_{\ell-1})}. \label{eqn:knhMCTP}
\end{align}
It is worth noting that, for a given history $(i_{\ell-k},\dots,i_{\ell-1})$, the above estimators determine
the transition probabilities to all but one state $i_\ell$. The remaining transition probability is 
obtained by enforcing the normalization constraints in~\eqref{constraints:korder}.

For a first-order nonhomogeneous Markov chain, these expressions simplify to
\begin{align}
\hat{a}^{(\ell)}_{i_{\ell-1} i_\ell}
&=
\frac{\sum_{m=1}^M \mathbf{1}\!\left(
X^{(m)}_{\ell-1}= i_{\ell-1},\;
X^{(m)}_{\ell}= i_\ell
\right)}
{\sum_{m=1}^M \mathbf{1}\!\left(
X^{(m)}_{\ell-1}= i_{\ell-1}
\right)}
=
\frac{\#\{\text{transitions } i_{\ell-1}\to i_\ell\}}
{\#\{\text{visits to state } i_{\ell-1}\}},
\label{eqn:1nhMCTP}
\end{align}
that is, the empirical conditional frequency of transitions from $i_{\ell-1}$ to $i_\ell$ at time $\ell$.

In this case, the maximum likelihood estimator of the initial distribution is
\begin{align}
\hat{\pi}_{i_1}
=
\frac{1}{M}\sum_{m=1}^M \mathbf{1}\!\left(X^{(m)}_1=i_1\right),
\qquad i_1\in S.
\label{eqn:1nhMCinitial}
\end{align}
The MLE of the joint probability of a length-$n$ path $p_{i_1\dots i_n}$ is obtained by substituting
$\hat{\pi}_{i_1}$ and $\hat{a}^{(\ell)}_{i_{\ell-1}i_\ell}$ into the factorization in~\eqref{eqn:pathprob}.
This estimator coincides with a special case of the MLE formula for decomposable hierarchical models
in algebraic statistics, expressed here in the language of Markov chains.

Let $\mathbf{u}=(u_{i_1\ldots i_n})$ denote the vector of observed counts  for joint state sequences,
where
\[
u_{i_1\ldots i_n}
=
\sum_{m=1}^M \mathbf{1}\!\left(X_1^{(m)}=i_1,\ldots,X_n^{(m)}=i_n\right)
\in \mathbb{N}
\]
is the number of times the sequence $(i_1,\ldots,i_n)$ appears in the data.
The total number of observations is
$\textstyle{M=\sum_{i_1\ldots i_n} u_{i_1\ldots i_n}.}$
For a fixed $k<n\le L+1$, the likelihood function of the length-$n$ path probabilities
based on the data $(X_1^{(m)},\ldots,X_n^{(m)})$, $m=1,\ldots,M$, is
\[
L(p;\mathbf{u})
=
\prod_{i_1,\ldots,i_n} p_{i_1\ldots i_n}^{\,u_{i_1\ldots i_n}}.
\]

Unlike the likelihood in~\eqref{eqn:likelihood_function_nonhomogeneous}, which uses the full
trajectory data, this likelihood depends only on the empirical counts of length-$n$ paths.
The goal of maximum likelihood estimation is to determine the probabilities
$p=(p_{i_1\ldots i_n})$ within the model that maximize $L(p;\mathbf{u})$.
This is a constrained optimization problem, where the constraints
are given by the polynomial equations defining the model, as described in
Section~\ref{sec:implicitization_nonhomogeneous}. In general, the likelihood equations may admit
multiple complex critical points.
For generic data vectors $\mathbf{u}$, the number of nonsingular complex critical points is constant and is
called the \emph{maximum likelihood degree} (ML degree) of the model
(see, e.g.,~\cite[Chapter~6]{drton-sturmfels-sullivant}).
The ML degree serves as a measure of the algebraic complexity of the maximum likelihood estimation problem, although one has to keep in mind that it does not account for critical points that are singular or on the boundary of the model.

It is a classical result that the ML degree of any decomposable hierarchical model is equal to one.
In particular, the maximum likelihood estimator exists uniquely and can be expressed as a rational
function of the data vector $\mathbf{u}$
(see, e.g.,~\cite[Section~2.1.7]{drton-sturmfels-sullivant}).
Since the general $k$-th order multistate Markov model is decomposable,
it follows that its maximum likelihood estimator admits an explicit closed-form expression.

To state this formula, we introduce notation for marginal counts of the data vector.
For a subset $R \subseteq \{1,\ldots,n\}$ and fixed states $(i_r)_{r\in R}$, define the marginal count
\[
u\big|_{\,i_r:\, r\in R}
\;=\;
\sum_{\substack{j_1,\ldots,j_n\\ j_r=i_r\ \text{for all } r\in R}}
u_{j_1,\ldots,j_n}.
\]

\begin{proposition}[\cite{drton-sturmfels-sullivant}, Proposition 2.1.7]\label{prop:mle hierarchical}
Consider the general $k$-order multistate Markov model and let $u$ be a vector of observed counts, and $M$ denote the total number of counts.
If all denominators below are nonzero, then the MLE is given by
    \[
    \hat{p}_{i_1\ldots i_n} = \frac{1}{M}\frac{\prod_{j=k+1}^n u|_{i_{j-k}\ldots i_j}}{\prod_{j=k+1}^{n-1} u|_{i_{j-k+1}\ldots i_j}},
    \]
    if the vector $u$ satisfies that none of the denominators vanish.
\end{proposition}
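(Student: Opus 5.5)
The plan is to prove the formula directly, by maximizing the likelihood over the parametrization. By Proposition~\ref{prop:equal_ideal_with_constraints}, the model inside the simplex equals $\phi(\Theta)$. So maximizing $L(p;\mathbf u)$ over the model amounts to maximizing
\[
\ell(\pi,a)=\sum_{i_1,\ldots,i_n} u_{i_1\ldots i_n}\Bigl(\log \pi_{i_1\ldots i_k}+\sum_{\ell=k+1}^n \log a^{(\ell)}_{i_{\ell-k}\cdots i_\ell}\Bigr)
\]
over $(\pi,a)\in\Theta$.

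Collecting terms, $\ell$ splits into independent blocks:
\begin{itemize}
\item a block $\sum u|_{i_1\ldots i_k}\log\pi_{i_1\ldots i_k}$ for the initial distribution;
\item for each $\ell$ and each history $(i_{\ell-k},\ldots,i_{\ell-1})$, a block $\sum_{i_\ell} u|_{i_{\ell-k}\ldots i_\ell}\log a^{(\ell)}_{i_{\ell-k}\cdots i_\ell}$.
\end{itemize}
Here the marginals are taken at the positions $\ell-k,\ldots,\ell$. This uses nonhomogeneity essentially: different time indices carry separate parameters, so the constraints in \eqref{constraints:korder} decouple.

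Each block is a multinomial log-likelihood subject to a single sum-to-one constraint. By Gibbs' inequality (or Lagrange multipliers together with concavity of $\log$), each block is maximized by the empirical proportions:
\[
\hat\pi_{i_1\ldots i_k}=u|_{i_1\ldots i_k}/M, \qquad \hat a^{(\ell)}_{i_{\ell-k}\cdots i_\ell}=u|_{i_{\ell-k}\ldots i_\ell}/u|_{i_{\ell-k}\ldots i_{\ell-1}}.
\]
This is exactly \eqref{eqn:knhMCTP} restricted to length-$n$ data. The nonvanishing hypothesis on the denominators is what makes these ratios well defined. If some history has zero count, its block does not contribute and any choice works, but the hypothesis excludes that case.

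Substituting into $\phi$ gives
\[
\hat p_{i_1\ldots i_n}=\frac{u|_{i_1\ldots i_k}}{M}\prod_{\ell=k+1}^n \frac{u|_{i_{\ell-k}\ldots i_\ell}}{u|_{i_{\ell-k}\ldots i_{\ell-1}}}.
\]
The numerator $u|_{i_1\ldots i_k}$ cancels the denominator of the $\ell=k+1$ factor. What remains are the numerators for $j=k+1,\ldots,n$ and the denominators $u|_{i_{j-k+1}\ldots i_j}$ for $j=k+1,\ldots,n-1$, obtained by reindexing $j=\ell-1$. This is the claimed formula.

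Alternatively, one can cite \cite[Proposition 2.1.7]{drton-sturmfels-sullivant} together with Lemma~\ref{lem:markov_is_hierarchical}. The facets $\{j-k,\ldots,j\}$ and separators $\{j-k+1,\ldots,j\}$ of the path-like decomposable complex then give the numerators and denominators directly. The step needing the most care is bookkeeping rather than conceptual: every marginal $u|_{\cdot}$ must be read with its positions attached, i.e.\ at the positions $j-k,\ldots,j$, not as a count of the pattern anywhere in the path. One must also check that the maximizer over $\Theta$ maps to the maximizer over the model. That follows from the equality $\phi(\Theta)=\mathcal V\cap\Delta$ and the fact that $L$ depends only on $p=\phi(\pi,a)$.
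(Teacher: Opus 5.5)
Your proof is correct. Your closing ``alternatively'' paragraph is precisely the paper's route. The paper gives no separate derivation: it obtains the formula as the special case of the closed-form MLE for decomposable hierarchical models in \cite{drton-sturmfels-sullivant}, via Lemma~\ref{lem:markov_is_hierarchical} (facets $\{j-k,\ldots,j\}$, separators $\{j-k+1,\ldots,j\}$). It then writes out, for $k=1$ only, the check that the formula agrees with the product of the classical estimators \eqref{eqn:1nhMCTP} and \eqref{eqn:1nhMCinitial}.

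Your main argument goes the other way. You maximize over $\Theta$ by splitting the log-likelihood into independent multinomial blocks, solve each block by Gibbs' inequality, and telescope once. You then use Proposition~\ref{prop:equal_ideal_with_constraints} to transfer the maximizer from $\phi(\Theta)$ to $\mathcal V\cap\Delta_{|S|^n-1}$. This is elementary and self-contained, and it covers all $k$ explicitly. Along the way it proves the empirical estimators \eqref{eqn:knhMCTP}, which the paper asserts without proof. The citation route instead buys the general framework (any decomposable complex, rational MLE, ML degree one), at the cost of a black box.

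Your approach also clarifies where homogeneity breaks things, but your attribution is slightly off. The decoupling into blocks does not need nonhomogeneity: in the homogeneous case the blocks are indexed by histories alone, with counts pooled over $\ell$, and this yields \eqref{eqn:khMCTP} as the MLE over $\phi(\Theta)$. What nonhomogeneity actually provides is twofold:
\begin{enumerate}
\item the block counts are position-specific marginals of $u$, which gives the stated formula;
\item the identification $\phi(\Theta)=\mathcal V\cap\Delta_{|S|^n-1}$ holds, whereas it fails in the homogeneous case (Example~\ref{example:homogeneous_MLE}).
\end{enumerate}

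One small correction. The hypothesis does not rule out $u|_{i_1\ldots i_k}=0$, since this marginal never appears as a denominator in the final formula, so your cancellation can be $0/0$. This is harmless. In that case $\hat\pi_{i_1\ldots i_k}=0$, so $\hat p_{i_1\ldots i_n}=0$ whatever value is chosen for $\hat a^{(k+1)}_{i_1\ldots i_{k+1}}$. The formula also gives $0$, because its numerator contains $u|_{i_1\ldots i_{k+1}}\le u|_{i_1\ldots i_k}=0$.
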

This expression coincides with the classical empirical transition–probability estimator,
written in joint-path coordinates.
Indeed, once the ML-estimators of the parameters are computed,
their product under the model parametrization yields exactly the normalized expressions
in Proposition~\ref{prop:mle hierarchical}.

As an illustration, consider the case $k=1$.
The relevant marginal counts are
\[
u\big|_{i_\ell}
=
\#\{\text{visits to state } i_\ell\},
\qquad
u\big|_{i_{\ell-1}i_\ell}
=
\#\{i_{\ell-1}\to i_\ell \text{ direct transitions}\}.
\]
It follows that
\begin{align*}
\hat{p}_{i_1\ldots i_n}
&=
\frac{1}{M}\,
\frac{\prod_{\ell=2}^n
\#\{i_{\ell-1}\to i_\ell \text{ direct transitions}\}}
{\prod_{\ell=2}^{n-1}
\#\{\text{visits to state } i_\ell\}}
\\
&=
\frac{\#\{i_1\to i_2 \text{ direct transitions}\}}{M}
\prod_{\ell=3}^n
\frac{\#\{i_{\ell-1}\to i_\ell \text{ direct transitions}\}}
{\#\{\text{visits to state } i_{\ell-1}\}}
\\
&=
\frac{\#\{\text{visits to state } i_1\}}{M}\,
\frac{\#\{i_1\to i_2 \text{ direct transitions}\}}
{\#\{\text{visits to state } i_1\}}
\prod_{\ell=3}^n
\frac{\#\{i_{\ell-1}\to i_\ell \text{ direct transitions}\}}
{\#\{\text{visits to state } i_{\ell-1} \}}
\\
&=
\hat{\pi}_{i_1}\prod_{\ell=2}^n \hat{a}^{(\ell)}_{i_{\ell-1} i_\ell}.
\end{align*}

\paragraph{Forbidden transitions.}
We now turn to models in which certain transitions are forbidden.
For such models, the conclusion of Proposition~\ref{prop:mle hierarchical} continues to hold
for all nonvanishing path probabilities.
Indeed, if a transition is forbidden, then any path containing that transition has probability
zero under the model, and the corresponding empirical counts
$u_{i_1\ldots i_n}$ must also be zero.
Consequently, these paths do not contribute to the likelihood function.
From an algebraic perspective, the vanishing ideal of a model with forbidden transitions
is obtained from that of the general model by adding the linear equations
$p_{i_1\ldots i_n}=0$ for all forbidden paths.

The maximum likelihood estimation problem in this setting can therefore be formulated as
the maximization of the likelihood function over the general model, subject to additional
polynomial constraints.
A direct computation using Lagrange multipliers shows that the estimator given in
Proposition~\ref{prop:mle hierarchical} satisfies the likelihood equations for all
coordinates corresponding to nonvanishing probabilities.
Thus, the same closed-form expression yields the maximum likelihood estimator on the
restricted model, after removing the zero-probability paths.

\subsection{Homogeneous multistate Markov model} \label{subsec:homogeneous_MLE}

In the $k$-th order homogeneous Markov model, we assume that the same transition
probabilities apply at every $k$-step transition, independently of the time index.
As a consequence, the likelihood function has the same form as in
\eqref{eqn:likelihood_function_nonhomogeneous}, except that the transition
probabilities no longer depend on $\ell$:
\begin{align}
P(\mathbf{X}^{(1)}, \dots, \mathbf{X}^{(M)})
=
\prod_{m=1}^{M}
\biggl(
\pi_{X_1^{(m)} \dots X_k^{(m)}}
\prod_{\ell=k+1}^{L+1}
a_{X_{\ell-k}^{(m)} \dots X_{\ell-1}^{(m)} X_\ell^{(m)}}
\biggr).
\end{align}

The maximum likelihood estimator of the initial distribution remains the same as
in~\eqref{eqn:knhMCTP}. In contrast, the estimator of the transition probabilities
in this case are obtained by summing over all time points~$\ell$:
\begin{align}
\hat{a}_{i_{\ell-k} \dots i_{\ell-1} i_\ell}
=
\frac{
\sum\limits_{m=1}^M \sum\limits_{\ell=k+1}^{L+1}
{\bf 1}\!\left(
X_{\ell-k}^{(m)} = i_{\ell-k}, \dots,
X_{\ell-1}^{(m)} = i_{\ell-1},
X_{\ell}^{(m)} = i_{\ell}
\right)
}{
\sum\limits_{m=1}^M \sum\limits_{\ell=k+1}^{L+1}
{\bf 1}\!\left(
X_{\ell-k}^{(m)} = i_{\ell-k}, \dots,
X_{\ell-1}^{(m)} = i_{\ell-1}
\right)
}.
\label{eqn:khMCTP}
\end{align}

The variances of the maximum likelihood estimators, under both nonhomogeneous and homogeneous multistate Markov models, can be obtained from the Fisher information matrix in the usual way; see, for example, \cite[Chapter~7]{vanderVaart2000}.

From the algebraic perspective, a natural first idea is to apply existing approaches for computing the MLE in log-linear models since $I(\mathcal{V})$ is a toric ideal. However, in Example~\ref{example:strict_inclusion_homogeneous} we showed that $I(\mathcal{V}) + \left\langle 1-\sum_{i_1,\ldots, i_n}p_{i_1\ldots i_n} \right\rangle$ and $I(\mathcal{M})$ can be different ideals. In Example~\ref{example:homogeneous_MLE} we demonstrate on an example that one indeed gets different answers using formulas~\eqref{eqn:knhMCTP}  and~\eqref{eqn:khMCTP} and an approach for computing MLE for log-linear models.

For general statistical models defined by polynomial equations and inequalities, one can find the MLE by constructing the \emph{likelihood ideal} (\cite[Algorithm 7.2.4]{sullivant2023algebraic}). The likelihood ideal is generated by polynomials that are obtained by the method of Lagrange multipliers for the constrained optimization problem of maximizing the likelihood function subject to the polynomial equations defining the model. In certain cases, MLE is the nonnegative real solution of the likelihood ideal that maximizes the likelihood function. However, there are also some caveats such as that the MLE can be a singular or boundary point of the model in which case it is usually not the solution of the likelihood ideal. Moreover, as we will discuss in Example~\ref{example:homogeneous_MLE}, there can be practical challenges working with the likelihood ideal.  Although algebraic techniques have frequently played an important role in developing deeper insight into maximum likelihood estimation~\cite{kubjas2015fixed}, for homogeneous multistate Markov models a direct statistical approach using standard formulas provides a simpler solution, whereas the algebraic approach is unnecessarily complicated.

\begin{example} \label{example:homogeneous_MLE}
We continue Example~\ref{example:illness_death_model} for the illness-death model (Figure~\ref{fig:illness}) in the homogeneous case with observations at four (discrete) time points. We consider a vector of counts, denoted by $\textbf{u}$, which we  obtain by simulating according to the data-generating mechanism from a homogeneous illness-death Markov model. Precisely, this means that we iteratively simulate $M=1000$ individual  trajectories by first sampling $M$ initial states according to $\boldsymbol{\pi}_{\text{gen}}$, and subsequently sampling the next three states in a sequential manner according to the transition probability matrix $\textbf{A}_{\text{gen}}$. This results in $M$ four-time-point trajectories and we count how many times each specific path occurs according to the 14 possible path realizations seen in Table~\ref{tab:obsevations4}, storing the frequency counts in vector $\textbf{u}$. The $\boldsymbol{\pi}_{\text{gen}}$ and $\textbf{A}_{\text{gen}}$ we used are:
\[
\boldsymbol{\pi}_{\text{gen}} = \begin{pmatrix}
    0.700\\
    0.300\\
    0
\end{pmatrix} ,\ 
\textbf{A}_{\text{gen}} = \begin{pmatrix}
    0.600 &0.275& 0.125\\
     0  & 0.750 & 0.250\\
     0 & 0 & 1
\end{pmatrix}.
\]
This gives rise to the vector $\textbf{u}= (140,59,25,82,30,61,114,48,54,92,137,53,60,45)$, with the entries ordered in the same way as in Example~\ref{example:illness_death_model}.  The vector of the empirical probabilities  is 
\[
\frac{\textbf{u}}{|\textbf{u}|}=
(0.140,\,0.059,\,0.025,\,0.082,\,0.030,\,0.061,\,0.114,\,0.048,\,0.054,\,0.092,\,0.137,\,0.053,\,0.060,\,0.045).
\]
This vector does not belong to the model as can be checked by evaluating polynomials from Example~\ref{example:illness_death_model} on it. For example, the first generator $p_{1 1 2 2}^2-p_{1 1 1 2}p_{1 2 2 2}$ in the ideal evaluates to $0.060^2-0.053 \cdot 0.045 = 0.001215$.

We can now obtain the estimates of the initial distribution using equation ~\eqref{eqn:knhMCTP} and of the transition probabilities using equation ~\eqref{eqn:khMCTP}:

\[
\hat{\boldsymbol{\pi}} = \begin{pmatrix}
    0.705\\
    0.295\\
    0
\end{pmatrix} ,\
\hat{\textbf{A}} = \begin{pmatrix}
    0.574 &0.292& 0.134\\
     0  & 0.763 & 0.237\\
     0 & 0 & 1
\end{pmatrix}.
\]
By substituting the estimates to the parametrization map $\phi$ we get the maximum likelihood estimates of the path probabilities
\[
\hat{\textbf{p}} = (0.133,\, 0.068,\, 0.031,\, 0.090,\, 0.028,\, 0.054,\, 0.120,\, 0.037,\, 0.049,\, 0.049,\, 0.131,\, 0.041,\,  0.053,\, 0.070).
\]
The difference $({\textbf{u}}/{|\textbf{u}|}) - \hat{\textbf{p}}$ is
\[
(0.007,\,-0.009,\,-0.006,\,-0.008,\,0.002,\,0.007,\,-0.006,\,0.011,\,0.005,\,0.043,\,0.006,\,0.012,\,0.007,\,-0.025).
\]
Observe that the empirical proportions of paths (and also the counts $\textbf{u}$) do not match with the MLE.  This is because the paths are governed by the multistate model, with the MLE obtained under this model by considering the initial distribution and the transition probability matrix, whereas the simple empirical proportions are obtained without imposing any particular model structure.

We now turn to algebraic approaches. 
Let $\textbf{A} \in \mathbb{Z}^{d \times n}$ be a matrix,
 $I_{\textbf{A}}$ the associated toric ideal, 
$\textbf{u} \in \mathbb{N}^n$ a vector of observed counts and $M$ the total number of observations.
By~\cite[Corollary 7.3.9]{sullivant2023algebraic}, the maximum likelihood estimate  $\hat{\textbf{p}}'$ on $V(I_{\textbf{A}}) \cap \Delta_{|S|^n-1}$ is the
unique nonnegative solution of the system
\begin{equation} \label{theorem_birch}
M \textbf{A} \hat{\textbf{p}}' = \textbf{A} \textbf{u},
\qquad
\hat{\textbf{p}}' \in V(I_{\textbf{A}}),
\end{equation}
whenever such a solution exists.

In our example, the ideal corresponding to equations~\eqref{theorem_birch} has dimension $0$ and degree $17$, and therefore has
$17$ complex solutions, counted with multiplicity. There are three real solutions and exactly one solution is nonnegative:
\[
\hat{\textbf{p}}' = (0.14,\, 0.053,\, 0.034,\, 0.081,\, 0.031,\, 0.055,\, 0.124,\, 0.048,\, 0.05,\, 0.089,\, 0.134,\, 0.051,\, 0.054,\, 0.056).
\]
Note that $\hat{\textbf{p}}\neq \hat{\textbf{p}}'$. 
This implies that $\mathcal{M}$ and $\textstyle{V(I_{\textbf{A}}) \cap \Delta_{|S|^n-1}=\mathcal{V} \cap \Delta_{|S|^n-1}}$ are not equal as the maximum likelihood estimates over these sets are different. We also have a strict inclusion of the ideals  $\textstyle{\sqrt{I_{\textbf{A}} + \langle 1-\sum_{i_1,\ldots, i_4}p_{i_1\ldots i_4}}\rangle \subset I(\mathcal{M})}$.  The ideal $I_{\textbf{A}}$ is a toric ideal of dimension seven and degree $31$ minimally generated by $18$ binomials. The ideal $\textstyle{\sqrt{I_{\textbf{A}} + \langle 1-\sum_{i_1,\ldots, i_4}p_{i_1\ldots i_4}}\rangle}$ is an ideal of dimension six and degree $31$ that is minimally generated by $19$ polynomials. On the other hand, the vanishing ideal of the model $I(\mathcal{M})$ is a radical ideal of dimension four and degree $25$ minimally generated by $31$ polynomials. 

The second algebraic approach is constructing the likelihood ideal for $I(\mathcal{M})$. A step towards this is constructing the augmented Jacobian of $I(\mathcal{M})$, a modification of the Jacobian matrix. In this particular example, the augmented Jacobian is a $31 \times 14$ matrix and at a critical point $p$ of the likelihood function, the vector $\textbf{u}$ lies in the row span of the augmented Jacobian. Due to the computational burden, we were not able to study the likelihood ideal, however, we could check that at the optimal $\hat{p}$, $\textbf{u}$ lies indeed in the row span of the augmented Jacobian. Moreover, $\hat{\textbf{p}}$ is a regular point of the interior of the model.

This example shows that
classical statistical methods provide a simpler and more direct route to the MLE in this case although the MLE is a regular point of the interior of the model.
\end{example}

\begin{proposition} \label{prop:model_is_not_equal_to_variety_interesected_with_probability_simplex}
In general, $\mathcal{M} \neq \mathcal{V} \cap \Delta_{|S|^n-1}$ for homogeneous multistate Markov models.
\end{proposition}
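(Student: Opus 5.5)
The plan is to exhibit one explicit counterexample. Since the statement only asserts that equality fails in general, a single homogeneous model with a single point of $(\mathcal{V} \cap \Delta_{|S|^n-1}) \setminus \mathcal{M}$ suffices. The cleanest route avoids heavy computation: take $k=1$, $S=\{0,1\}$, $n=3$, as in Example~\ref{example:strict_inclusion_homogeneous}. There
\[
\phi^*(p_{ijl}) = \pi_i a_{ij} a_{jl}.
\]
The idea is to find a point of $\mathcal{V} \cap \Delta_7$ coming from real parameters that fail the row-sum conditions and cannot be renormalized. The reason renormalization fails is that in the homogeneous setting a single matrix $a$ must serve at both time steps, so the rescaling trick from the proof of Proposition~\ref{prop:equal_ideal_with_constraints} is unavailable.

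\textbf{Step 1 (choose the point).} Take nonnegative real parameters $\pi, a$ and set $p = \phi(\pi,a)/\sum \phi(\pi,a)$. Since $\mathcal{V}$ is a cone (scale $\pi$), this $p$ lies in $\mathcal{V} \cap \Delta_7$. A convenient choice is $\pi = (1,0)$, which kills all paths beginning with $1$, together with $a_{00}=a_{01}=1$, $a_{10}=0$, $a_{11}=2$. Up to scaling this gives
\[
p_{000}=1,\quad p_{001}=1,\quad p_{010}=0,\quad p_{011}=2,
\]
and all other coordinates zero.

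\textbf{Step 2 (show $p \notin \mathcal{M}$).} Suppose $p = \phi(\pi',a')$ with $(\pi',a')$ stochastic. Since $p_{1jl}=0$ for all $j,l$ while $p_{0jl}$ is not identically zero, we get $\pi'_1=0$ and $\pi'_0=1$. Then
\[
p_{000}=a_{00}'^2,\quad p_{001}=a'_{00}a'_{01},\quad p_{010}=a'_{01}a'_{10},\quad p_{011}=a'_{01}a'_{11}.
\]
From $p_{000}=p_{001}>0$ we get $a'_{00}=a'_{01}$, hence both equal $1/2$ by normalization. From $p_{010}=0$ we get $a'_{10}=0$, so $a'_{11}=1$. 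Then $p_{011}=a'_{01}a'_{11}=1/2 \cdot 1 = 1/2$ equals $p_{000}=1/4$ times $2$, which matches the ratio $2$ required. So this choice does not yet yield a contradiction, and I would tune the parameters.

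The obstruction should come from the row-sum constraints acting jointly at both time steps. In $\mathcal{M}$, with $\pi'_0=1$ we have $p_{000}+p_{001}=a'_{00}$ and $p_{010}+p_{011}=a'_{01}$, while also $p_{000}/p_{001}=a'_{00}/a'_{01}$. The relation $a'_{00}=(p_{000}+p_{001})$ together with $p_{000}=a_{00}'^2$ forces the identity
\[
p_{000} = (p_{000}+p_{001})^2.
\]
This is a non-toric polynomial relation of exactly the kind appearing among the generators of $I(\phi(\Theta))$ in Example~\ref{example:strict_inclusion_homogeneous}. I will choose unnormalized parameters violating it, for example $a_{00}=1$, $a_{01}=2$, $a_{10}=0$, $a_{11}=1$. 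After normalization this gives
\[
(p_{000},p_{001},p_{010},p_{011}) = (1,2,0,2)/5,
\]
and then $(p_{000}+p_{001})^2 = 9/25 \neq 1/5 = p_{000}$, so $p \notin \mathcal{M}$. The main care point is the forced case $\pi'_0=1$: I must verify the deduction of $\pi'$ from the zero pattern of $p$, and confirm that $p_{000}>0$ makes the division legitimate.

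As a consistency check, and as an alternative proof, I would cite Example~\ref{example:homogeneous_MLE}. There the MLE over $\mathcal{M}$, computed via \eqref{eqn:knhMCTP} and \eqref{eqn:khMCTP}, differs from the unique MLE $\hat{\mathbf{p}}'$ over $\mathcal{V} \cap \Delta_{|S|^n-1}$ given by Birch's theorem \eqref{theorem_birch}. If the sets were equal, their MLEs would coincide, so $\hat{\mathbf{p}}' \in (\mathcal{V} \cap \Delta)\setminus\mathcal{M}$. I would, however, lead with the explicit hand computation above, since it avoids relying on numerical Macaulay2 output.
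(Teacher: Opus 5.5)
Your final counterexample is correct, and it proves the statement by a genuinely different route from the paper's.

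\textbf{The paper's route.} The paper argues indirectly through maximum likelihood. For the homogeneous illness-death model with $n=4$, it computes numerically the classical MLE $\hat{\mathbf{p}}$ on $\mathcal{M}$ and the unique nonnegative solution $\hat{\mathbf{p}}'$ of the Birch system on $\mathcal{V}\cap\Delta$. It observes $\hat{\mathbf{p}}\neq\hat{\mathbf{p}}'$ and concludes that the sets differ. This is exactly your ``alternative proof''.

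\textbf{Your route.} You give an exact witness in the smallest unrestricted case $|S|=2$, $k=1$, $n=3$. Take $\pi=(1/5,0)$, $a_{00}=1$, $a_{01}=2$, $a_{10}=0$, $a_{11}=1$. This gives $(p_{000},p_{001},p_{010},p_{011})=(1,2,0,2)/5$ and all $p_{1jl}=0$, a point of $\mathcal{V}\cap\Delta_7$. Stochastic parameters would force $\pi'_0=\sum_{j,l}p_{0jl}=1$ and $a'_{00}=p_{000}+p_{001}=3/5$. Hence $p_{000}=9/25\neq 1/5$, so the point is not in $\mathcal{M}$.

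\textbf{What your route buys.} It is exact and hand-checkable. It needs neither a rounded numerical solution of a degree-$17$ system nor the uniqueness part of Birch's theorem. For the very model of Example~\ref{example:strict_inclusion_homogeneous}, it answers the question the paper leaves open there: whether the two sets differ inside the simplex.

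It also isolates the responsible relation. On all of $\mathcal{M}$ one has $p_{000}\sum_{j,l}p_{0jl}=(p_{000}+p_{001})^2$. At $\phi(\pi,a)$ for arbitrary parameters, the difference of the two sides equals $\pi_0^2a_{00}^2a_{01}\bigl((a_{10}+a_{11})-(a_{00}+a_{01})\bigr)$. This relation is homogeneous of degree two, so normalization only rescales it. Hence any positive $(\pi,a)$ with unequal row sums gives a strictly positive witness. On that score your method matches the paper's, whose implicit witness $\hat{\mathbf{p}}'$ is positive on all $14$ admissible paths. The same formula explains why your first choice failed: there both row sums equal $2$, so $a/2$ is stochastic and the point lies in $\mathcal{M}$.

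\textbf{What the paper's route buys.} It carries the statistical message that log-linear MLE machinery gives the wrong estimate for homogeneous models.

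\textbf{Cleanup for a write-up.} Drop the first parameter choice. Justify $\pi'_1=0$ via $\sum_{j,l}p_{1jl}=\pi'_1$, using the row sums of $a'$, rather than by the zero pattern. Note that no division occurs, so positivity of $p_{000}$ is not needed.
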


\begin{proof}
In Example~\ref{example:homogeneous_MLE}, the MLE over $\mathcal{M}$ is $\hat{\textbf{p}}$ and the MLE over $\mathcal{V} \cap \Delta_{|S|^n-1}$ is $\hat{\textbf{p}}'$. If the two sets were equal, then the MLEs over the two sets would also be equal.
\end{proof}

\subsection{Identifiability}

A model is called identifiable if its parameters can be uniquely recovered from
the observed probability distribution; see, for example,
\cite[Chapter~16]{sullivant2023algebraic} for an introduction.

\begin{definition}
\label{def::TypesofID}
Let $\phi \colon \Theta \rightarrow \mathcal{M}_{\phi}$ be a rational map from a
finite-dimensional parameter space $\Theta \subseteq \mathbb{R}^k$ to a
statistical model $\mathcal{M}_{\phi}$, with $\mathrm{im}(\phi)=\mathcal{M}_{\phi}$.
The model is said to be \emph{globally identifiable} if
\[
\phi^{-1}(\phi(\theta))=\theta
\quad\text{for all }\theta\in\Theta.
\]
\end{definition}
If we take $M=1$ in the previous two subsections, then the formulas
in \eqref{eqn:knhMCTP} recover the parameters of the nonhomogeneous model and~\eqref{eqn:knhMCTP} and~\eqref{eqn:khMCTP} recover the parameters of the homogeneous model uniquely from
the joint path probabilities.
This holds both for general $k$-th order multistate Markov models and for
models with forbidden transitions.
Therefore, all these models are globally identifiable.

\section{Examples and applications }\label{sec:applications} 

For the purposes of a computational illustration, we use data  which includes all of the English words that have ever been used in William Shakespeare’s works. We use this dataset to construct a general  discrete-time multistate Markov  model. The state space $S$ includes 27 states: 26 states representing the 26 letters in the English alphabet, and one absorbing state that represents a space (hereafter denoted by $\mathcal\Box$).
 Ultimately, our construction entails that each word becomes a specific path and gets absorbed into $\mathcal\Box$. 

Words in the corpus evidently have varying lengths. In order to ensure that each trajectory is of the same length, and to thus use a common time horizon across all trajectories, we consider a maximum length $L$ corresponding to the (second) longest word in the state space (note that the longest word \textit{honorificabilitudinitatibus} from medieval Latin, with 27 characters, was disregarded for the sake of convenience). All of the words shorter than this were then padded with a space character (absorbing state) up until the end of the time horizon. Therefore, we may write the time horizon of the process as $\ell = 1, 2, \dots, L, L+1$.  Including the $L+1$ time point ensures that we can explicitly capture the final letter-to-space transition. To summarize, we represent words as Markovian paths, and a word of length $k < L$ is modeled as a trajectory $X_1, \dots, X_{L+1}$, whereby $X_\ell$ is the letter at the $\ell$-th time point and $X_t = \mathcal\Box$ for times $\ell\geq k+1$.  We are also then interested in the number of times that each Markov trajectory, or word, appears in the dataset, and we therefore explicitly keep a count of the number of occurrences of each word since this is essential for the construction of a likelihood function. As way of illustration, in our dataset, we have $L=17$, so if we consider the word ``and", then $X_1 = \text{a}$, $X_2 = \text{n}$, and $X_3=\text{d}$; $X_4 = X_5 = \dots = X_{17} = X_{18} = \mathcal\Box$. We finally also remark that words consisting of just one letter, such as ``s" (resulting from the use of an apostrophe to indicate possession or prefixes) have been discarded from the dataset. 

\subsection{Statistical computations for the MLE}\label{subsec:mle_stat_comp} 

Under the case of  the first-order homogeneous Markov chain, it is important to recall from Section~\ref{sec:Preliminaries} that we deal with just one transition probability matrix $\mathbf{A} =(a_{ij})$ 
of dimension ${27\times27}$, a square matrix of the same order as the dimension of the state space $S$. By contrast, in the first-order nonhomogeneous case, we emphasize that we have $L-1$ such matrices; indeed, one per each adjacent time step pair. To fully construct a likelihood function under both of these cases, we will need the initial distribution (row) vector denoted as $\boldsymbol{\pi} = (\pi_{i}), i \in S$, where $\pi_{i}=P(X_1=i)$, which provides us with the probability that a word  begins with a certain character $i$. We also need $\mathbf{A}$ to construct the likelihood. Nonetheless, we note that in both the homogeneous and nonhomogeneous cases, a single common initial distribution $\boldsymbol{\pi}$ governs the distribution of the first character. The model is nonhomogeneous when the transition probabilities depend on the time step.

Directly using equation~\eqref{eqn:khMCTP} with $k=1$, we can compute the MLE's of the $a_{ij}$ entries of $\mathbf{A}$. The intuition, applied to this dataset, is that they represent a ratio defined by the following quantities: in the numerator, we count how many times, across all words and all time instances, the process was at letter $i$ by time $\ell-1$ and immediately was at letter $j$ by the subsequent time step $\ell$; in the denominator, we count how many times the process was at letter $i$ and observed a next transition to any of the other states. 

We perform the statistical computation with the $27$ states using \texttt{R} language, and we report an illustrative submatrix denoted $\hat{\mathbf{A}}_{\text{sub}}$ from the overall $\hat{\mathbf{A}}$ which contains common two-letter consecutive sequences (called bigrams) in the English language \footnote{\url{https://mathcenter.oxford.emory.edu/site/math125/englishLetterFreqs/}\label{website_bigrams}}: 

\[ \hat{\mathbf{A}}_{\text{sub}}=
\left(
\begin{array}{c|ccccc}
      & h     & e     & n     & s     & t \\
\hline
t & 0.347 & 0.074 & 0.001 & 0.016 & 0.013 \\
h & 0.000 & 0.356 & 0.003 & 0.003 & 0.027 \\
i & 0.000 & 0.045 & 0.236 & 0.156 & 0.129 \\
e & 0.002 & 0.039 & 0.089 & 0.075 & 0.035 \\
n & 0.001 & 0.086 & 0.009 & 0.033 & 0.089
\end{array}
\right)
\]

From the above submatrix, it is of interest to remark that the two-letter consecutive sequence ``th" has a relatively high probability of occurring (it is the highest probability in this submatrix) and this is due to the existence of common words and articles that appear many times in English such as ``the", ``this", ``that", etc. Precisely, ``th" is the most common bigram by frequency. The second most common bigram is ``he" and our submatrix also demonstrates this\textsuperscript{\ref{website_bigrams}}. Additionally, some transition pairs like $h\rightarrow h$ occur with probability $0$, meaning there exists no such bigram in the Shakeperean English (Elizabethan) corpus. This may slightly differ to modern English, where a word such as ``bathhouse" does actually have that precise two-letter sequence. Lastly, with respect to the MLE for the initial distribution $\boldsymbol{\pi}$, we obtain a couple of values such as: $\hat{\pi}_a =0.081$, $\hat{\pi}_t=0.147$, and $\hat{\pi}_s = 0.079$. The interpretation, as way of illustration, is that roughly $15\%$ of words in the Shakespearan English corpus start with the letter ``t".  As expected, the values for $\hat{\pi}_x$ or $\hat{\pi}_z$ are very small.

It is furthermore of interest, in relation to what has been discussed previously, to visualize an example of joint path probabilities using this dataset. Taking the word ``the" as illustration, we can compute its probability of occurrence as such: 
\begin{align}
    P(X_1=t)P(X_2=h |X_1=t)P(X_3=e|X_2=h).
\end{align}
\label{eqn: applic_path_prob}
This product of conditional probabilities can be directly computed using the relevant entries of $\hat{\mathbf{A}}_{\text{sub}}$, and by noticing that the first factor is actually just the initial distribution of the character $t$, which we expressed as $\hat{\pi}_t=0.147$. Therefore, the probability of the word ``the" is equal to the product $0.147 \times 0.347 \times 0.356 = 0.0176.$ 

In the first-order nonhomogeneous case, we present the MLE's for position-specific transition probabilities for some selected letter pairs in Table~\ref{tab:mle_nonhom}.

\begin{table}[h]
\centering
\caption{Selected MLE's for some first-order letter transition pairs at the $16$ time steps being considered}
\label{tab:mle_nonhom}
\begin{tabular}{c ccccc}
\toprule
$\ell$ & $t\!\to\! h$ & $h\!\to\! e$ & $i\!\to\! n$ & $e\!\to\! r$ & $a\!\to\! l$ \\
\midrule
2  & 0.6665 & 0.3522 & 0.3803 & 0.0698 & 0.0899 \\
3  & 0.0680 & 0.4519 & 0.1205 & 0.1349 & 0.0737 \\
4  & 0.2773 & 0.5339 & 0.2151 & 0.0783 & 0.0857 \\
5  & 0.1346 & 0.1204 & 0.2482 & 0.1243 & 0.0910 \\
6  & 0.0578 & 0.1951 & 0.3490 & 0.1934 & 0.0587 \\
7  & 0.0420 & 0.1435 & 0.3281 & 0.1747 & 0.1075 \\
8  & 0.0344 & 0.0405 & 0.3372 & 0.1691 & 0.0892 \\
9  & 0.0524 & 0.0577 & 0.2814 & 0.1309 & 0.0578 \\
10  & 0.0489 & 0.1706 & 0.1717 & 0.1752 & 0.0633 \\
11 & 0.0252 & 0.0686 & 0.1987 & 0.0930 & 0.1293 \\
12 & 0.0693 & 0.0161 & 0.2118 & 0.0420 & 0.2667 \\
13 & 0.0132 & 0.0000 & 0.1579 & 0.0243 & 0.0222 \\
14 & 0.0000 & 0.0000 & 0.4630 & 0.0000 & 0.6000 \\
15 & 0.0000 & 0.0000 & 0.0000 & 0.0000 & 0.0000 \\
16 & 0.0000 & 0.0000 & 0.0000 & 0.0000 & 0.0000 \\
17 & 0.0000 & 0.0000 & 0.0000 & 0.0000 & 0.0000 \\
\bottomrule
\end{tabular}
\end{table}

As with the discussion concerning the homogeneous case, we directly observe that the $t\rightarrow h$ transition occurs with high probability, and here specifically we can discern that it occurs at the very first two adjacent time pairs ($\ell=2$ in Table~\ref{tab:mle_nonhom}). This is again due to the common word instances that start with this bigram in the English language. In that sense, the nonhomogeneous setting provides us with more information because we can identify specific positions within words that carry a high likelihood of encountering a specific bigram; something we were not able to comment on in the homogeneous case. Similarly, the pair $i \rightarrow n$ occurs with high probability both at the initial adjacent time instances, but also even later in positions $13\rightarrow 14$ ($\ell=14$ in Table~\ref{tab:mle_nonhom}) for longer words. 

We can perform the same calculation for the probability of the word ``the" using equation~\eqref{eqn: applic_path_prob}, but with the understanding that these are now time-dependent conditional probabilities (the estimators of transition probabilities from Table~\ref{tab:mle_nonhom} specifically depend on the character placements within the actual word). Observe that we employ the same initial distribution, as was mentioned previously. We obtain the product: $0.147 \times 0.6665 \times 0.4519 = 0.0443. $ Under the nonhomogeneous case, the probability of encountering the word ``the" is higher than in the homogeneous case, but this is unsurprising due to our previous discussion. As an interesting point of comparison, the empirical proportion of the word ``the" in the cleaned Shakespeare corpus used for the analysis is $0.0325.$ This is closer to the estimate of the probability of path ``the" under the nonhomogeneous case ($0.0443$) than the homogeneous case ($0.0176$).

\begin{remark} 
In light of the discussion in Example~\ref{example:homogeneous_MLE}, we want to investigate in the dataset the possible discrepancies between the ordering of observed word counts and the ordering of the fitted MLE probabilities. To do this, we compare the ranks induced by the empirical counts with the ranks induced by these fitted probabilities. 

Using the Shakespeare corpus dataset, we have a column that identifies the number of occurrences of a particular word trajectory. We can thus easily calculate the empirical proportion, as well as calculate its fitted probability under the homogeneous Markov model in the same manner as in equation~\eqref{eqn: applic_path_prob}. We can subsequently rank the words according to these two computations. We note that words having large differences between these two ranks are those whose empirical prominence in the data differs the most from their fitted probability under the model. 

\begin{table}[ht]
\centering
\caption{Subset of words with high discrepancies between observed counts and fitted MLEs}
\label{tab:word_rank_discrepancies}
\begin{tabular}{l l l l l}
\toprule
Direction & Word & Observed Count & Empirical Proportion & Fitted MLE \\
\midrule
Higher in observations & \textit{northumberland} & 163 & \(1.764\times 10^{-4}\) & \(3.840\times 10^{-17}\)  \\
Higher in observations & \textit{rosencrantz} & 78 & \(8.440\times 10^{-5}\) & \(2.301\times 10^{-17}\)  \\
Higher in observations & \textit{cleopatra} & 277 & \(2.997\times 10^{-4}\) & \(6.797\times 10^{-15}\)  \\
\midrule
Higher under MLE & \textit{bus} & 5 & \(5.410\times 10^{-6}\) & \(4.652\times 10^{-4}\)  \\
Higher under MLE & \textit{ace} & 5 & \(5.410\times 10^{-6}\) & \(1.997\times 10^{-4}\)  \\
\bottomrule
\end{tabular}
\end{table}

As way of illustration, in Table~\ref{tab:word_rank_discrepancies}, we see that words such as ``northumberland" or ``cleopatra" often show up because they appear repeatedly in Shakespeare's works. Nonetheless, they are assigned very small fitted probabilities because our homogeneous model captures general transition patterns between letters, instead of play-specific repetitions of particular names. 

As in Example~\ref{example:homogeneous_MLE}, the vector of empirical proportions of words does not belong to the variety of the vanishing ideal of the first-order multistate Markov model. Consider four paths (``trace", ``grade", ``trade", ``grace") with respective counts $(12, 0, 43, 622)$. By Proposition~\ref{prop:generating set general} ($I = (\text{tr}), I' = (\text{gr}), J = (\text{a}), S = (\text{ce}), S' = (\text{de})$), the polynomial
\[
p_{\text{trace}}p_{\text{grade}}-p_{\text{trade}}p_{\text{grace}}
\]
belongs to the vanishing ideal of the nonhomogeneous first-order multistate Markov model, and hence also to that of the homogeneous first-order multistate Markov model.  
The first term of the polynomial evaluates to zero, but the second does not. Hence, the polynomial vanishes neither on the counts nor on the empirical proportions. Such examples are easy to construct: for any two words sharing a letter at the same position, if swapping all letters after that position yields at least one letter combination that is not an actual word, its observed count will be zero, making one term of the polynomial zero while the other remains nonzero.
\end{remark}

\textbf{Reversible illness-death model.} To continue, one final application of this dataset is to collapse the state space $S$ into just three states consisting of: vowels, consonants, and the absorbing state of a space ($\mathcal\Box$). That way, we may introduce, as discussed in Section~\ref{sec:Intro}, the analogue of an illness-death model, but with back transitions allowed between states $0$ and $1$. This is an  application  of a reversible illness-death model introduced in Example~\ref{example:revillness_death_model}. For clarity, we may designate that all consonants (denoted as $C$ thereafter) are the ``healthy" state ($0$), the vowels $V$ are the ``diseased" state ($1$), and the space character $\mathcal\Box$ is the absorbing ``death" state ($2$). This means that, overall there are six permissible transitions at each time step: $V\rightarrow V$, $V\rightarrow C$, $V\rightarrow \mathcal\Box$, $C\rightarrow C$, $C\rightarrow V$, $C \rightarrow \mathcal{\Box}.$ For convenience sake, we may refer to this new reduced 3-state state space as $S^*$ so that $S^{*}=\{V,C,\mathcal{\Box}$\}.

We now use this example to demonstrate the construction of a homogeneous $2$-nd order  Markov multistate model ($k=2$) and also perform similar work as above to obtain the MLE's of the entries of the transition probability matrix. Note that these entries are probabilities which are conditioned on two backward time steps, such as: $a_{i_{\ell-2} i_{\ell-1} i_\ell}= P(X_\ell = i_\ell | X_{\ell-2}=i_{\ell-2},X_{\ell-1}=i_{\ell-1})$ with $i_{\ell-2}, i_{\ell-1}, i_{\ell} \in S^{*}$, and $(i_{\ell-2}, i_{\ell-1})\in \{V,C\}^{2}$. This means that the two previous time steps are combinations of $(V,C)$ pairs, and we want to obtain the conditional transition probability of the next time step being one of $(V,C,\mathcal\Box).$ 

We show the corresponding estimators of the entries of the transition probability matrix $A_2 = (a_{i_{\ell-2} i_{\ell-1} i_\ell})$ of a $2$-nd order homogeneous Markov multistate model according to equation \eqref{eqn:khMCTP}. Due to rounding, not all rows sum exactly to $1$. 
\[ {\boldsymbol{  \widehat A}_2}
=
\left(
\begin{array}{c|ccc}
      & V & C & \mathcal\Box \\
\hline
VV & 0.02 & 0.77 & 0.20 \\
VC & 0.23 & 0.40 & 0.37 \\
CV & 0.16 & 0.64 & 0.20 \\
CC & 0.49 & 0.11 & 0.39
\end{array}
\right).
\]
The above matrix offers evident conclusions about what we already know concerning the English language, such as the fact that the likelihood of encountering words with three consecutive vowels or three consecutive consonants is low. Thus, with relatively high probability, when the two previous letters are vowels, then we can expect a transition into a consonant for the third character. Interestingly, we can apply this understanding to reversible illness-death model dynamics. Indeed, we can conceptualize that the transition probabilities exhibit a dependence on the duration (number of time steps) in the current state, where for example, a prolonged stay in either the ``healthy" or ``diseased" state may influence the stochastic movement between said states. This statistical premise is known as a semi-Markov model, and arises quite naturally when discussing multistate models in general. This effect cannot be seen when only discussing first-order models, hence why higher order models can provide valuable insights about the state occupancy time.

Finally, for this example, we specify an initial distribution row vector $\boldsymbol{\pi}_{2} = (\pi_{i,j})_{(i,j)\in\{V,C\}^{2}}$. It specifies how the second-order homogeneous model is initially configured by looking at the vowel-consonant pair jointly at times $l=1$ and $l=2$. Hence, we may write $\pi_{ij}=P(X_{1}=i,X_{2}=j).$ Using the estimator of the initial distribution given in equation \eqref{eqn:knhMCTP}, we obtain the following MLE of the initial distribution (again noting that it does not sum to $1$ due to rounding): 
\begin{align*}
    \boldsymbol{\hat{\pi}}^{(2)}  &= P\big((X_1=V, X_{2}=V), (X_{1}=V,X_{2}=C),(X_{1}=C,X_{2}=V),(X_{1}=C,X_{2}=C)\big) = \\
    &= (0.01,0.19,0.55,0.24).
\end{align*}
We also remark, as is standard practice in illness-death modelling, that we have assumed that the initial distribution of the system is only assigned to states $0$ and $1$ ($C$ and $V$ respectively for the example). This means that the process is assumed to start in a nonabsorbing state, and at the first two time instances, subjects are either healthy $(C)$ or with disease $(V)$, but not  dead $(\mathcal{\Box}).$ We may lastly also use $\boldsymbol{  \widehat A}_2$ together with the initial distribution to obtain all possible combinations of three-step path probabilities. Table~\ref{tab:path_probs_secondorder} depicts the results. We see that the path $CVC$ arises with the highest probability, which again reflects the construction of many words in English. 

\begin{table}[ht]
\centering
\caption{ML-estimates of all path probabilities under the second-order nonhomogeneous model}
\label{tab:path_probs_secondorder}
\begin{tabular}{c c c c}
\toprule
path & $p_{\text{path}}$ & Computation & ML-estimates \\
\midrule
VVV & $P(VV)P(V| VV)$ & $0.01 \times 0.02$ & 0.0002 \\ 
VVC & $P(VV)P(C|VV)$ & $0.01\times0.77$  & 0.0077 \\
VV$\mathcal\Box$ & $P(VV)P(\mathcal\Box|VV)$& $0.01\times0.20$ & 0.0020    \\
VCV & $P(VC)P(V|VC)$  & $0.19\times0.23$ & 0.0437  \\
VCC & $P(VC)P(C|VC)$  & $0.19\times0.40$ & 0.0760 \\
VC$\mathcal\Box$ & $P(VC)P(\mathcal\Box|VC)$ &$0.19\times0.37$ & 0.0703   \\
CVV & $P(CV)P(V|CV)$ & $0.55\times 0.16$  & 0.0880  \\
CVC & $P(CV)P(C|CV)$ & $0.55 \times 0.64$ & 0.3520 \\
CV$\mathcal\Box$ & $P(CV)P(\mathcal\Box|CV)$ & $0.55\times 0.20$ & 0.1100  \\
CCV & $P(CC)P(V|CC)$ & $0.24\times 0.49$  & 0.1176 \\
CCC & $P(CC)P(C|CC)$  & $0.24 \times 0.11$ &  0.0264\\
CC$\mathcal\Box$ & $P(CC)P(\mathcal\Box|CC)$ & $0.24 \times 0.39$ & 0.0936 \\
\bottomrule
\end{tabular}
\end{table}

\section{Discussion and future directions}
\label{sec:discussion}

In this paper, we studied discrete-time multistate Markov models from the perspective of
algebraic statistics and made explicit the connection between the multistate modelling
framework and algebraic–geometric descriptions of statistical models.
To the best of our knowledge, this is the first work that systematically bridges these two
areas and establishes a precise correspondence between them.
While algebraic statistics is a comparatively recent field, multistate and event-history
models have a long and well-developed history in applied statistics.
Making this connection explicit enables a bidirectional transfer of ideas: algebraic methods
provide new structural insights into multistate models, while classical modelling questions
give rise to new algebraic problems. The use of higher-order Markov chains has increased in recent years, yet they are still not well studied in algebraic statistics. Therefore, we focus on them in this paper. From the multistate model viewpoint, explicit dependence on the history of the process is crucial for interpretation. Expanding the state space to reduce a higher-order Markov chain to a first-order Markov chain makes interpretation more difficult.

One fundamental aspect of multistate and event-history models that we did not address in
detail is the treatment of incomplete data, in particular right censoring.
In many applications, subjects may be lost to follow-up or leave the study before its end, so
that their true state is unknown beyond the censoring time.
One possible approach is to enlarge the state space by introducing censoring as an additional
absorbing state.
This can be natural when censoring is informative, but in the case of independent censoring,
paths involving early censoring may not be of primary interest and may obscure the dynamics
on the original state space.
Understanding how censoring mechanisms can be incorporated into the algebraic framework, and
how they affect the resulting varieties and vanishing ideals, remains an open problem.

\begin{question}
How can right censoring and other forms of incomplete observation be incorporated into the
algebraic-statistical description of multistate Markov models, and how do they affect the
associated algebraic varieties and vanishing ideals?
\end{question}

Our results also clarify the relationship between algebraic and classical likelihood-based
approaches.
Proposition~\ref{prop:mle hierarchical}, together with the computational 
example in
Section~\ref{sec:applications}, shows that for nonhomogeneous first-order multistate Markov
models, algebraic and statistical approaches lead to identical maximum likelihood estimators.
An important direction for future work is to investigate whether this correspondence extends
beyond point estimation.

\begin{question}
How can uncertainty quantification for multistate Markov models, such as Fisher information
or asymptotic variance, be expressed and interpreted within the algebraic framework?
\end{question}

From an algebraic perspective, the most substantial open problems concern homogeneous
multistate Markov models.
While Proposition~\ref{prop:homogeneous binomials} and Remark~\ref{rem:linear} identify
families of binomial relations that necessarily vanish on homogeneous models, explicit
computations indicate that these relations do not generate the full vanishing ideal in
general.

\begin{question}
What is the vanishing ideal of a $k$-th order homogeneous multistate Markov model?
\end{question}

Another natural direction concerns the interaction between homogeneity and structural
constraints such as forbidden transitions or absorbing states.
While in the nonhomogeneous setting such restrictions correspond to coordinate slices of the
general model, their algebraic effect in the homogeneous case is less well understood.

\begin{question}
How do forbidden transitions interact with homogeneity at the level of vanishing ideals?
\end{question}

Finally, several connections to related areas merit further exploration. 
Nonhomogeneous discrete-time multistate Markov models are discrete directed graphical models. We think that homogeneous discrete-time multistate Markov models might have an interpretation as colored discrete graphical models.
Mixtures of 
multistate Markov models are standard tools for modeling population heterogeneity. It would be interesting to study them from an algebraic perspective. 
Connections to reversible Markov chains, models for higher order multistate Markov models, phylogenetic models, and matroid-theoretic structures
suggested by certain binomial relations point to further opportunities for interaction
between algebraic statistics and applied stochastic modelling.

\noindent{\bf Acknowledgement.}
This project was partially funded by the Seed Funding program of KU Leuven, the University of Helsinki, and Aalto University. F.M. was partially supported by the FWO grants G0F5921N (Odysseus), G023721N, and by the KU Leuven grant iBOF/23/064. 

\noindent{\bf Disclosure on the use of 
AI.}
Generative AI tools were used in Section~\ref{sec:applications} for the purposes of quick and efficient data transformation and to assist in smaller coding tasks such as de-bugging or~safety~checks. They were also used throughout the paper to help polish the wording.

\bibliographystyle{plain}
\bibliography{Ref}
\bigskip

{\small{\noindent {\bf Authors' addresses}}

\medskip
\noindent{Dario Gasbarra, University of Vaasa} \hfill {\tt dario.gasbarra@uwasa.fi}
\\
\noindent{Kaie Kubjas, Aalto University}\hfill {\tt kaie.kubjas@aalto.fi}
\\ 
\noindent{Sangita Kulathinal, University of Helsinki} \hfill {\tt sangita.kulathinal@helsinki.fi}
\\
\noindent{Nataliia Kushnerchuk, Aalto University}\hfill {\tt  nataliia.kushnerchuk@aalto.fi} 
\\
\noindent{Fatemeh Mohammadi, 
KU Leuven} \hfill {\tt fatemeh.mohammadi@kuleuven.be}
\\ 
\noindent{Etienne Sebag, University of Helsinki} \hfill{\tt
etienne.sebag@helsinki.fi}}
\newpage\appendix
\section{Path probabilities and ideals for 
Example~\ref{example:revillness_death_model}
}\label{sec:appendix}
{\renewcommand{\arraystretch}{1.2}%
We list the path probabilities for Example~\ref{example:revillness_death_model} of a reversible illness-death model observed at four time instances in Table~\ref{tab:Revobsevations4}. From the path probabilities listed in Table~\ref{tab:Revobsevations4}, we compute the vanishing ideal of the nonhomogeneous model, which has 
66 generators listed below.
\begin{gather*}
    \langle p_{0, 1, 2, 2}p_{1, 1, 1, 2}-p_{0, 1, 1, 2}p_{1, 1, 2, 2},\ p_{1, 0, 1, 2}p_{1, 1, 1, 1}-p_{1, 0, 1, 1}p_{1, 1, 1, 2},\ p_{0, 1, 2, 2}p_{1, 1, 1, 1}-p_{0, 1, 1, 1}p_{1, 1, 2, 2},\\
    p_{0, 1, 1, 2}p_{1, 1,1, 1}-p_{0, 1, 1, 1}p_{1, 1, 1, 2},\ p_{0, 0, 1, 2}p_{1, 1, 1, 1}-p_{0, 0, 1, 1}p_{1, 1, 1, 2},\ p_{1, 0, 1, 2}p_{1, 1, 1, 0}-p_{1, 0, 1, 0}p_{1, 1, 1, 2},\\
    p_{1, 0, 1, 1}p_{1, 1, 1, 0}-p_{1, 0, 1,0}p_{1, 1, 1, 1},\ p_{0, 1, 2, 2}p_{1, 1, 1, 0}-p_{0, 1, 1, 0}p_{1, 1, 2, 2},\ p_{0, 1, 1, 2}p_{1, 1, 1, 0}-p_{0, 1, 1, 0}p_{1, 1, 1, 2},\\
    p_{0, 1, 1, 1}p_{1, 1, 1, 0}-p_{0, 1, 1, 0}p_{1, 1, 1, 1},\ p_{0,0, 1, 2}p_{1, 1, 1, 0}-p_{0, 0, 1, 0}p_{1, 1, 1, 2},\ p_{0, 0, 1, 1}p_{1, 1, 1, 0}-p_{0, 0, 1, 0}p_{1, 1, 1, 1},\\
    p_{0, 1, 2, 2}p_{1, 1, 0, 2}-p_{0, 1, 0, 2}p_{1, 1, 2, 2},\ p_{0, 1, 1, 2}p_{1, 1, 0,2}-p_{0, 1, 0, 2}p_{1, 1, 1, 2},\ p_{0, 1, 1, 1}p_{1, 1, 0, 2}-p_{0, 1, 0, 2}p_{1, 1, 1, 1},\\
    p_{0, 1, 1, 0}p_{1, 1, 0, 2}-p_{0, 1, 0, 2}p_{1, 1, 1, 0},\ p_{1, 0, 0, 2}p_{1, 1, 0, 1}-p_{1, 0, 0, 1}p_{1,1, 0, 2},\ p_{0, 1, 2, 2}p_{1, 1, 0, 1}-p_{0, 1, 0, 1}p_{1, 1, 2, 2},\\
    p_{0, 1, 1, 2}p_{1, 1, 0, 1}-p_{0, 1, 0, 1}p_{1, 1, 1, 2},\ p_{0, 1, 1, 1}p_{1, 1, 0, 1}-p_{0, 1, 0, 1}p_{1, 1, 1, 1},\ p_{0, 1, 1,0}p_{1, 1, 0, 1}-p_{0, 1, 0, 1}p_{1, 1, 1, 0},\\
    p_{0, 1, 0, 2}p_{1, 1, 0, 1}-p_{0, 1, 0, 1}p_{1, 1, 0, 2},\ p_{0, 0, 0, 2}p_{1, 1, 0, 1}-p_{0, 0, 0, 1}p_{1, 1, 0, 2},\ p_{1, 0, 0, 2}p_{1, 1, 0, 0}-p_{1,0, 0, 0}p_{1, 1, 0, 2},\\
    p_{1, 0, 0, 1}p_{1, 1, 0, 0}-p_{1, 0, 0, 0}p_{1, 1, 0, 1},\ p_{0, 1, 2, 2}p_{1, 1, 0, 0}-p_{0, 1, 0, 0}p_{1, 1, 2, 2},\ p_{0, 1, 1, 2}p_{1, 1, 0, 0}-p_{0, 1, 0, 0}p_{1, 1, 1, 2},\\
     p_{0, 1, 1, 1}p_{1, 1, 0, 0}-p_{0, 1, 0, 0}p_{1, 1, 1, 1},\ p_{0, 1, 1, 0}p_{1, 1, 0, 0}-p_{0, 1, 0, 0}p_{1, 1, 1, 0},\ p_{0, 1, 0, 2}p_{1, 1, 0, 0}-p_{0, 1, 0, 0}p_{1, 1, 0, 2},\\
     p_{0, 1, 0, 1}p_{1, 1,0, 0}-p_{0, 1, 0, 0}p_{1, 1, 0, 1},\ p_{0, 0, 0, 2}p_{1, 1, 0, 0}-p_{0, 0, 0, 0}p_{1, 1, 0, 2},\ p_{0, 0, 0, 1}p_{1, 1, 0, 0}-p_{0, 0, 0, 0}p_{1, 1, 0, 1},\\
     p_{0, 0, 2, 2}p_{1, 0, 1, 2}-p_{0, 0, 1,2}p_{1, 0, 2, 2},\ p_{0, 1, 1, 2}p_{1, 0, 1, 1}-p_{0, 1, 1, 1}p_{1, 0, 1, 2},\ p_{0, 0, 2, 2}p_{1, 0, 1, 1}-p_{0, 0, 1, 1}p_{1, 0, 2, 2},\\
     p_{0, 0, 1, 2}p_{1, 0, 1, 1}-p_{0, 0, 1, 1}p_{1, 0, 1, 2},\ p_{0,1, 1, 2}p_{1, 0, 1, 0}-p_{0, 1, 1, 0}p_{1, 0, 1, 2},\ p_{0, 1, 1, 1}p_{1, 0, 1, 0}-p_{0, 1, 1, 0}p_{1, 0, 1, 1},\\
     p_{0, 0, 2, 2}p_{1, 0, 1, 0}-p_{0, 0, 1, 0}p_{1, 0, 2, 2},\ p_{0, 0, 1, 2}p_{1, 0, 1,0}-p_{0, 0, 1, 0}p_{1, 0, 1, 2},\ p_{0, 0, 1, 1}p_{1, 0, 1, 0}-p_{0, 0, 1, 0}p_{1, 0, 1, 1},\\
     p_{0, 0, 2, 2}p_{1, 0, 0, 2}-p_{0, 0, 0, 2}p_{1, 0, 2, 2},\ p_{0, 0, 1, 2}p_{1, 0, 0, 2}-p_{0, 0, 0, 2}p_{1,0, 1, 2},\ p_{0, 0, 1, 1}p_{1, 0, 0, 2}-p_{0, 0, 0, 2}p_{1, 0, 1, 1},\\
     p_{0, 0, 1, 0}p_{1, 0, 0, 2}-p_{0, 0, 0, 2}p_{1, 0, 1, 0},\ p_{0, 1, 0, 2}p_{1, 0, 0, 1}-p_{0, 1, 0, 1}p_{1, 0, 0, 2},\ p_{0, 0, 2,2}p_{1, 0, 0, 1}-p_{0, 0, 0, 1}p_{1, 0, 2, 2},\\
     p_{0, 0, 1, 2}p_{1, 0, 0, 1}-p_{0, 0, 0, 1}p_{1, 0, 1, 2},\ p_{0, 0, 1, 1}p_{1, 0, 0, 1}-p_{0, 0, 0, 1}p_{1, 0, 1, 1},\ p_{0, 0, 1, 0}p_{1, 0, 0, 1}-p_{0, 0, 0, 1}p_{1, 0, 1, 0},\\
     p_{0, 0, 0, 2}p_{1, 0, 0, 1}-p_{0, 0, 0, 1}p_{1, 0, 0, 2},\ p_{0, 1, 0, 2}p_{1, 0, 0, 0}-p_{0, 1, 0, 0}p_{1, 0, 0, 2},\ p_{0, 1, 0, 1}p_{1, 0, 0, 0}-p_{0, 1, 0, 0}p_{1, 0, 0, 1},\\
      p_{0, 0, 2, 2}p_{1, 0, 0, 0}-p_{0, 0, 0, 0}p_{1, 0, 2, 2},\ p_{0, 0, 1, 2}p_{1, 0, 0, 0}-p_{0, 0, 0, 0}p_{1, 0, 1, 2},\ p_{0, 0, 1, 1}p_{1, 0, 0, 0}-p_{0, 0, 0, 0}p_{1, 0, 1, 1},\\
      p_{0, 0, 1, 0}p_{1, 0,0, 0}-p_{0, 0, 0, 0}p_{1, 0, 1, 0},\ p_{0, 0, 0, 2}p_{1, 0, 0, 0}-p_{0, 0, 0, 0}p_{1, 0, 0, 2},\ p_{0, 0, 0, 1}p_{1, 0, 0, 0}-p_{0, 0, 0, 0}p_{1, 0, 0, 1},\\
      p_{0, 0, 1, 2}p_{0, 1, 1, 1}-p_{0, 0, 1,1}p_{0, 1, 1, 2},\ p_{0, 0, 1, 2}p_{0, 1, 1, 0}-p_{0, 0, 1, 0}p_{0, 1, 1, 2},\ p_{0, 0, 1, 1}p_{0, 1, 1, 0}-p_{0, 0, 1, 0}p_{0, 1, 1, 1},\\
      p_{0, 0, 0, 2}p_{0, 1, 0, 1}-p_{0, 0, 0, 1}p_{0, 1, 0, 2},\ p_{0,0, 0, 2}p_{0, 1, 0, 0}-p_{0, 0, 0, 0}p_{0, 1, 0, 2},\ p_{0, 0, 0, 1}p_{0, 1, 0, 0}-p_{0, 0, 0, 0}p_{0, 1, 0, 1} \rangle.
\end{gather*}

\begin{table}[!ht]
    \centering 
  \caption{Potential observations at four time points under the reversible illness-death model (Figure~\ref{fig:illness} with transition from 1 to 0 included). The initial state can be $0$ or $1$. State $2$ is an absorbing state and hence, the chain will stay in state $2$ once that state is reached. The transition probability from state $2$ to states $0$ and $1$ is zero and to state $2$ is one. Hence, all probabilities of the type $a_{20}^{(\ell)} = 0, a_{21}^{(\ell)} = 0, a_{22}^{(\ell)} = 1,$ for all $\ell$.}
\label{tab:Revobsevations4}
    \begin{tabular}{c|c|c|r}
    \hline
        Initial & Three next instances & Prob (nonhomogeneous) & Prob (homogeneous)\\
        \hline
        0 & 000 & $\pi_0a_{00}^{(1)} a_{00}^{(2)} a_{00}^{(3)}$ & $\pi_0a_{00}^3$ \\
        0 & 001  & $\pi_0a_{00}^{(1)} a_{00}^{(2)} a_{01}^{(3)}$ & $\pi_0a_{00}^2 a_{01}$ \\
        0 & 002  & $\pi_0a_{00}^{(1)} a_{00}^{(2)} a_{02}^{(3)}$ & $\pi_0a_{00}^2 a_{02}$ \\
        0 & 011  & $\pi_0a_{00}^{(1)} a_{01}^{(2)} a_{11}^{(3)}$ & $\pi_0a_{00} a_{01} a_{11}$ \\
        0 & 012  & $\pi_0a_{00}^{(1)} a_{01}^{(2)} a_{12}^{(3)}$ & $\pi_0a_{00} a_{01} a_{12}$ \\
        0 & 022  & $\pi_0a_{00}^{(1)} a_{02}^{(2)} a_{22}^{(3)}$ & $\pi_0a_{00} a_{02} a_{22}$ \\
        0 & 111  & $\pi_0a_{01}^{(1)} a_{11}^{(2)} a_{11}^{(3)}$ & $\pi_0a_{01} a_{11}^2$ \\
        0 & 112  & $\pi_0a_{01}^{(1)} a_{11}^{(2)} a_{12}^{(3)}$ & $\pi_0a_{01} a_{11} a_{12}$ \\
        0 & 122  & $\pi_0a_{01}^{(1)} a_{12}^{(2)} a_{22}^{(3)}$ & $\pi_0a_{01} a_{12} a_{22}$\\
        0 & 222  & $\pi_0a_{02}^{(1)} a_{22}^{(2)} a_{22}^{(3)}$ & $\pi_0a_{02} a_{22}^2$\\
        0 & 100  & $\pi_0a_{01}^{(1)} a_{00}^{(2)} a_{00}^{(3)}$ & $\pi_0a_{00}^2 a_{01}$ \\
        0 & 010  & $\pi_0a_{00}^{(1)} a_{01}^{(2)} a_{10}^{(3)}$ & $\pi_0a_{00} a_{01} a_{10}$ \\
        0 & 110  & $\pi_0a_{01}^{(1)} a_{11}^{(2)} a_{10}^{(3)}$ & $\pi_0a_{01} a_{10} a_{11}$ \\        
        0 & 101  & $\pi_0a_{01}^{(1)} a_{10}^{(2)} a_{01}^{(3)}$ & $\pi_0a_{01}^2 a_{10}$ \\ 
        0 & 102 & $\pi_0a_{01}^{(1)} a_{10}^{(2)} a_{02}^{(3)}$ & $\pi_0a_{01}a_{10}a_{02}$ \\  
        \hline
        1 & 000 & $\pi_1a_{10}^{(1)} a_{00}^{(2)} a_{00}^{(3)}$ & $\pi_1a_{10} a_{00}^2$ \\
        1 & 001  & $\pi_1a_{10}^{(1)} a_{00}^{(2)} a_{01}^{(3)}$ & $\pi_1a_{10} a_{00} a_{01}$ \\
        1 & 002  & $\pi_1a_{10}^{(1)} a_{00}^{(2)} a_{02}^{(3)}$ & $\pi_1a_{10} a_{00} a_{02}$ \\
        1 & 011  & $\pi_1a_{10}^{(1)} a_{01}^{(2)} a_{11}^{(3)}$ & $\pi_1a_{10} a_{01} a_{11}$ \\
        1 & 012  & $\pi_1a_{10}^{(1)} a_{01}^{(2)} a_{12}^{(3)}$ & $\pi_1a_{10} a_{01} a_{12}$ \\
        1 & 022  & $\pi_1a_{10}^{(1)} a_{02}^{(2)} a_{22}^{(3)}$ & $\pi_1a_{10} a_{02} a_{22}$ \\
        1 & 111  & $\pi_1a_{11}^{(1)} a_{11}^{(2)} a_{11}^{(3)}$ & $\pi_1a_{11}^3$ \\
        1 & 112  & $\pi_1a_{11}^{(1)} a_{11}^{(2)} a_{12}^{(3)}$ & $\pi_1a_{11}^2 a_{12}$ \\
        1 & 122  & $\pi_1a_{11}^{(1)} a_{12}^{(2)} a_{22}^{(3)}$ & $\pi_1a_{11} a_{12} a_{22}$\\
        1 & 222  & $\pi_1a_{12}^{(1)} a_{22}^{(2)} a_{22}^{(3)}$ & $\pi_1a_{12} a_{22}^2$\\
        1 & 100  & $\pi_1a_{11}^{(1)} a_{00}^{(2)} a_{00}^{(3)}$ & $\pi_1a_{11} a_{00}^2$ \\
        1 & 010  & $\pi_1a_{10}^{(1)} a_{01}^{(2)} a_{10}^{(3)}$ & $\pi_1a_{01} a_{10}^2$ \\
        1 & 110  & $\pi_1a_{11}^{(1)} a_{11}^{(2)} a_{10}^{(3)}$ & $\pi_1a_{10} a_{11}^2$ \\        
        1 & 101  & $\pi_1a_{11}^{(1)} a_{10}^{(2)} a_{01}^{(3)}$ & $\pi_1a_{01} a_{10} a_{11}$ \\           1 & 102 & $\pi_1a_{01}^{(1)} a_{10}^{(2)} a_{02}^{(3)}$ & $\pi_1a_{01}a_{10}a_{02}$ \\     
        \hline        
    \end{tabular}
\end{table}
}

\clearpage

\end{document}